\newtheorem{theo}{Theorem}
\newtheorem{lem}{Lemma}[section]
\newtheorem{defi}{Definition}
\newtheorem{pro}{Proposition}[section]
\newtheorem{game}{Game}
\theoremstyle{remark}
\newtheorem{remark}{Remark}[section]
\def\vp{\varphi}
\def\({\left(}
\def\){\right)}
\def\l|{\left|}
\def\r|{\right|}
\def\ue{u^\ep}
\def\ep{\varepsilon}
\def\eps{\varepsilon}
\def\mr{\mathbb{R}}
\def\nab{\nabla}
\def\hal{\frac{1}{2}}
\def\a{\alpha}
\def\p{\partial}
\def\uB{\overline{u}}
\def\ub{\underline{u}}
\def\k{\kappa}
\def\G{\Gamma}
\def\v{v}
\def\vp{\varphi}
\def\RE{\mathcal{R}^\ep}
\def\Re{\mathcal{R}_\ep}
\def\Sc{\mathcal{S}}
\def\dt0{{{\frac{d}{dt}}_{|t=0}}}
\def\d2t0{{{\frac{d^2}{dt^2}}_{|t=0}}}
\numberwithin{equation}{section}
\def\un{\mathbf{1}}
\def\R{\mathbb{R}}
\begin{document}

\title{Repeated games for eikonal equations, integral curvature
 flows and non-linear parabolic integro-differential equations}

\author{ Cyril Imbert\footnote{Universit\'e Paris-Dauphine, CEREMADE,
    place de Lattre de Tassigny, 75775 Paris cedex 16, France, \texttt{imbert@ceremade.dauphine.fr}}~
  and Sylvia Serfaty\footnote{Universit\'e Pierre et Marie Curie
(Paris 6), laboratoire Jacques-Louis Lions \& Courant Institute of Mathematical Sciences, New York University, \texttt{serfaty@ann.jussieu.fr}}
}

\date{\today}

\maketitle

\begin{abstract}
  The main purpose of this paper is to approximate several non-local
  evolution equations by zero-sum repeated games in the spirit of the
  previous works of Kohn and the second author (2006 and 2009):
  general fully non-linear parabolic integro-differential equations on
  the one hand, and the integral curvature flow of an interface
  (Imbert, 2008) on the other hand. In order to do so, we start by
  constructing  such a game for eikonal equations whose speed has a
  non-constant sign. This provides a (discrete) deterministic control
  interpretation of these evolution equations.

  In all our games, two players choose positions successively, and
  their final payoff is determined by their positions and additional
  parameters of choice.  Because of the non-locality of the problems
  approximated, by contrast with local problems, their choices have to
  ``collect'' information far from their current position. For
  integral curvature flows, players choose hypersurfaces in the whole
  space and positions on these hypersurfaces.  For parabolic
  integro-differential equations, players choose smooth functions on
  the whole space.
\end{abstract}

\noindent \textbf{Keywords.} Repeated games, integral curvature flows,
 para\-bolic integro-differential equations,
viscosity solutions, geometric flows \medskip

\noindent \textbf{Mathematical Subject Classifications.} 35C99, 53C44, 90D10, 49K25

\section{General introduction}

Kohn and the second author gave in \cite{ks0} a deterministic control
interpretation for motion by mean curvature and some other geometric
laws. In particular, given an initial set $\Omega_0 \subset \mr^N$,
they prove that the repeated game invented by Joel Spencer (originally
called ``pusher-chooser'' game, now sometimes known as the ``Paul-Carol''
game) \cite{spencer} converges towards the
mean curvature motion of $\p\Omega_0$. In a second paper \cite{ks},
they construct analogous approximations of general fully non-linear
parabolic and elliptic equations.

This paper is concerned with extending this approach to several
non-local evolutions. In particular we construct a zero-sum repeated
game with two players for a geometric motion originally introduced to
describe dislocation dynamics \cite{ahlm}. This motion also appears in
\cite{cafsoug} where threshold dynamics associated with kernels
decaying slowly at infinity are considered. It was recently
reformulated by the first author \cite{imbert} in order to deal with
singular interacting potentials. Such a motion is referred to as the
integral curvature flow; it also appears in
\cite{is,cafroqsav}. See the introduction of \cite{imbert} for
more details.

In order to construct such a game, we start with the simpler guiding
case of the eikonal equation associated with a changing sign velocity,
for which we give a game approximation.  We are guided by the ideas of
Evans and Souganidis \cite{evanssouganidis}; they proved in particular
that the solution of the eikonal equation can be represented by the
value function of a differential game. Our first task is thus to give
a discrete version of such a representation.

The specificity of the integral curvature flow is that it is non-local
in the sense that its normal speed at a boundary point $x$ not only
depends on the front close to $x$ (such as the outer normal unit
vector or the curvature tensor) but also on the whole curve. Indeed,
the integral curvature is a singular integral operator. This is the
reason why we also contruct a game to approximate general fully
non-linear parabolic equations involving singular integral terms.

The framework of viscosity solutions \cite{cl81,cil92} and the
level-set approach \cite{os,cgg,evansspruck} are used in order to define
properly the various geometric motions. We recall that the level-set
approach consists in representing the initial interface as the
$0$-level set of a (Lipschitz) continuous function $u_0$, looking for
the evolving interface under the same form, proving that the function
$u(t,x)$ solves a partial differential equation and finally proving
that the $0$-level set of the function $u(t,\cdot)$ only depends on
the $0$-level set of $u_0$. The proofs of convergence follow the
method of Barles-Souganidis \cite{bs91} i.e. use the stability,
monotonicity and consistency of the schemes provided by our games.

There are several motivations for constructing such games. First, it
shows that viscosity solutions of an even wider class of equations
have a deterministic control representation; while previously this was
known to be true only for first order Hamilton-Jacobi equations, and
then since \cite{ks0,ks} for general local second order PDE's.  Seen
differently, it shows that a wide class of non-local evolutions
 have a minimax formulation.  Then, these games can serve
to build robust numerical schemes to approximate the solutions to the
equations.  Finally, although this has not been achieved very much,
they could in principle serve to obtain new qualitative information on
the solutions to the PDE's.

The game we present for integral curvature flow, even though this is a
geometric evolution, is much more complicated that the Paul-Carol
game studied in \cite{ks0}. It would be very nice to find a game whose
rules are simpler and which would be a natural generalization of the
Paul-Carol game. However, we do not know at this stage whether this
is possible.

The paper is organized as follows. In Section 2, we present the
various equations that we study, state the definitions, present the
games and give the main convergence results: first for parabolic
integro-differential equations (in short PIDE), second for eikonal
equations, and third for integral curvature flows.  In Section 3, we
return to these theorems in order and give their proofs.

\paragraph{Notation.} The unit ball of $\mr^N$ is denoted by $B$.  A
ball of radius $r$ centered at $x$ is denoted by $B_r(x)$.  The
function $\un_A (z)$ is defined as follows: $\un_A (z) = 1$ if $z \in
A$ and $0$ if not.  The unit sphere of $\R^N$ is denoted by
$\mathbb{S}^{N-1}$. The set of symmetric real $N \times N$ matrices is
denoted by $S_N$.

Given two real numbers $a,b$, $a \wedge b$ denotes $\min (a,b)$ and $a
\vee b$ denotes $\max (a,b)$. Moreover, $a_+$ denotes $\max(0,a)$ and
$a_-= \max(0,-a)$.

The time derivative, space gradient and Hessian matrix
 of a function $\phi$ are respectively denoted by $\partial_t \phi$, $D \phi$
and $D^2 \phi$.

$C^2_b(\R^N)$ denotes the space of $C^2$ bounded functions
such that their first and second derivatives are also bounded.

\paragraph{Acknowledgements.} The first author was partially
supported by the ANR project MICA from the French Ministry of
Research, the second by an EURYI award.

\section{Main Results}

This section is devoted to the description of the games we introduce to
approximate the  various geometric motions or solutions of parabolic
PIDE.

Following \cite{ks0,ks}, in each game there are two opposing players
Paul and Carol (or sometimes Helen and Mark). Paul starts at point $x$
at time $t>0$ with zero score. At each step $n$, the position $x_n$
and time $t_n$ are updated by using a small parameter $\eps>0$:
$(t_n,x_n) = (t_n(\eps),x_n(\eps))$. The game continues until the
running time $t_N$ is larger than a given final time $T$. At the end
of the game, Paul's final score is $u_T(x_N)$ where $u_T$ is a given
continuous function $u_T$ defined on $\mr^N$, and $x_N$ is the final
position. Paul's objective is to maximize his final score and Carol's
is to obstruct him.

We define the value function $\ue$ of the game starting at $x$ at time
$t$ as
\begin{equation}
  \label{valf} \ue(t,x)= \max \(\text{final score for Paul starting
    from } (t,x)\) \, .
\end{equation}
The main results of this paper assert that the value functions
associated with the games described in the next subsections converge
to solutions of the corresponding evolution equations. As it is
natural for control problems, the framework to use is that of
viscosity solutions.

We present the games in increasing order of complexity, so   we start by presenting the results for
parabolic integro-differen\-tial equations (PIDE).

\subsection{General Parabolic Integro-Differential Equations}

The parabolic non-linear integro-differential equations at stake in
this paper are of the following form
\begin{equation}\label{eq:pide}
-\partial_t u + F(t,x,Du ,D^2 u, I [x,u]) = 0 \quad \mbox{ in } (0,T] \times \mr^N
\end{equation}
where $T>0$ is a final time, $F$ is a continuous non-linearity
satisfying a proper ellipticity condition (see below) and $I[x,U]$ is
a singular integral term defined for $U : \R^N \to \R$ as follows
\begin{equation}
  \label{defI} I [x,U]  = \int [ U(x+z) - U (x) - DU(x) \cdot z
  \un_B (z) ] \nu (dz)
\end{equation}
where we recall $B$ is the unit ball, $\un_B(z) = 1$ if $|z| < 1$ and
$0$ if not, and $\nu$ is a non-negative singular measure satisfying
\begin{equation}\label{cond:nuPIDE}
\int_B |z|^2 \nu (dz) < + \infty , \quad \int_{\mr^N \setminus B}
\nu (dz) < +\infty\, .
\end{equation}
We also assume for simplicity that $\nu (dz) = \nu (-dz)$ but this is
not a restriction. Such measures are referred to as (symmetric) L\'evy
measures and associated integral operators $I[x,U]$ as L\'evy
operators. Such equations appear in the context of mathematical
finance for models driven by jump processes; see for instance
\cite{ct}. Because of the games we construct, a \emph{terminal}
condition is associated with such a parabolic PIDE. Given a final time
$T>0$, the solution $u$ of \eqref{eq:pide} is submitted to the
additional condition
\begin{equation}\label{eq:tc}
u(T,x) = u_T (x)
\end{equation}
where $u_T:\R^N \to \R$ is the terminal datum.  The equation is called
parabolic when the following ellipticity condition is fulfilled
\begin{equation}\label{cond:ellip}
A \le B, l \le m \Rightarrow F(t,x,p,A,l) \ge F(t,x,p,B,m) \; ,
\end{equation}
where $A\le B$ is meant with respect to the order on symmetric
matrices.  Under this condition, the equation with terminal condition
(\ref{eq:tc}) is well-posed in $(0,T] \times \R^N$.  \medskip

\subsubsection{Viscosity solutions for PIDE}

In this section, we recall the definition and framework of viscosity
solutions for \eqref{eq:pide} \cite{soner,sayah}. Since we will work
with bounded viscosity solutions, we give a definition in this
framework.
%**************************
\begin{defi}[Viscosity solutions for PIDE]\label{def:visc-pide}
Consider $u : (0,T) \times \mr^N \to \R$, a bounded function.
\begin{enumerate}
\item It is a \emph{viscosity sub-solution} of \eqref{eq:pide} if
it is upper semi-continuous and if for every  bounded
test-function $\phi \in C^2$  such that $u-\phi$ admits a global maximum
$0$ at $(t,x) \in (0,T) \times \R^N$, we have
\begin{equation}\label{subsol-pide}
 -\p_t \phi (t,x) +  F(t,x,D\phi (x),D^2 \phi (x), I [x,\phi])   \le 0 \; .
\end{equation}
\item It is a \emph{viscosity super-solution} of \eqref{eq:geom} if
it is lower semi-continuous and if for every  bounded
test-function $\phi \in C^2$ such that $u-\phi$ admits a  global minimum
$0$ at $(t,x) \in (0,T) \times \R^N$, we have
\begin{equation}\label{sursol-pide}
 -\p_t \phi (t,x) +  F(t,x,D\phi (x),D^2 \phi (x), I [x,\phi])   \ge 0 \; .
\end{equation}
\item A continuous function $u$ is a \emph{viscosity solution} of
\eqref{eq:pide} if it is both a sub and super-solution.
\end{enumerate}
\end{defi}
%**************************
\begin{remark} \label{rem:supp-nu-compact} If the mesure $\nu$ is
  supported in $B_R$, then the global maximum/minimum $0$ of $u-\phi$ at
  $(t,x)$ can be replaced with a strict maximum/minimum $0$ in $(0,T)
  \times B_{R'}(x)$ for any $R' \ge R$. Indeed, changing $\phi$ outside
$B_R(x)$ does not change the value of $I[x,\phi]$ in this case.
\end{remark}
On the one hand, in order for the value of the repeated game we are
going to construct to be finite, we need
to make  some growth assumption on the nonlinearity $F$.
On the other hand, in order to get the convergence of the value of the
repeated game, the comparison principle for
\eqref{eq:pide} has to hold. For these reasons we assume that
$F$ satisfies the ellipticity condition given above together with
the following set of assumptions (see \cite{barlesimbert}):
\medskip

\noindent \textsc{Assumptions (A).}
\begin{itemize}
\item (A0) $F$ is continuous on $\R\times \R^N \times \R^N \times S_N
  \times \mr$. \item (A1) There exist constants $k_1>0$, $k_2 >0$ and
  $C>0$ such that for all $(t,x,p,A) \in \R \times \R^N \times \R^N
  \times \R^N \times S_N, $ we have
$$
|F(t,x,p,A,0)| \le C (1 + |p|^{k_1} + |X|^{k_2}) \, .
$$
\item (A2-1) For all $R>0$, there exist moduli of continuity
$\omega, \omega_R$ such that, for all $|x|,|y|\le R$, $|v|\le R$,
$l\in\R$ and for all $X,Y \in S_N $ satisfying
\begin{equation}\label{ineqmat}
\left[\begin{array}{cc}X&0\\0&-Y\end{array}\right]
 \le \frac1\eps \left[\begin{array}{cc}I&-I\\-I&I\end{array}\right]
+ r(\beta) \left[\begin{array}{cc}I&0\\0&I\end{array}\right]
\end{equation}
for some $\eps >0$ and $r(\beta) \to 0$ as $\beta \to 0$ (in the sense of matrices in $S_{2N}$), then, if
$s(\beta) \to 0$ as $\beta \to 0$, we have
\begin{multline}\label{cond:a31}
F(t,y,v,\eps^{-1} (x-y), Y,l) - F(t,x,v,\eps^{-1} (x-y) + s(\beta),X,l) \\
\le \omega (\beta)+\omega_R (|x-y|+\eps^{-1}|x-y|^2)
\end{multline}
\noindent  or \item (A2-2) For all $R>0$, $F$ is uniformly
continuous on $[-R,R] \times \R^n \times B_R \times D_R \times \R$
where $D_R := \{ X \in S_N;\ |X| \leq R\}$ and there exist a
modulus of continuity $\omega_R$ such that, for all $x,y \in
\R^N$, $|v|\le R$, $ l\in\R$ and for all $X,Y \in S_N$ satisfying
\eqref{ineqmat} and $\eps >0$, we have
\begin{equation}\label{cond:a32}
F(t,y,v,\eps^{-1} (x-y), Y,l) - F(t,x,v,\eps^{-1} (x-y),X,l) \le
\omega_R (|x-y|+ \eps^{-1} |x-y|^2 )\; .
\end{equation}
\item (A3) $F(t,x,u,p,X,l)$ is Lipschitz continuous in $l$,
uniformly with respect to all the other variables.
\end{itemize}
Assumptions (A0)-(A1)   are all we need to show that the relaxed
semi-limits of our value functions are viscosity sub- (resp.
super-)solutions to (\ref{eq:pide}). Assumptions (A2)-(A3) are
meant to ensure that a comparison principle holds for (\ref{eq:pide}), \textit{i.e.}
that viscosity sub-solutions are smaller than viscosity
super-solutions, which guarantees the final convergence.

\subsubsection{The game for PIDE}

We are given positive parameters $\ep,R>0$. A truncated integral
operator $I_R [x,\Phi]$ is defined by replacing in \eqref{defI}
$\nu(dz)$ with $\un_{B_R} (z) \nu (dz)$. We also consider a
positive real number $\alpha \in (0, (\max(1,k_1,k_2))^{-1})$ where
the constants $k_1,k_2$ appear in Assumption~(A1). In
this setting, for the sake of consistency with \cite{ks} where a
financial interpretation was given, the players should be Helen
(standing for hedger) and Mark (standing for market), with Helen
trying to maximize her final score under the opposition of Mark.
%---------
\begin{game}[Parabolic PIDE]
At time $t\in (0, T)$, the game  starts at $x$ and Helen has a zero
score. Her objective is to get the highest final score.
\begin{enumerate}
\item Helen  chooses a function $\Phi \in C^2_b(\mr^N)$ such that
$\|\Phi\|_\infty \le \eps^{-\alpha}$, $|D \Phi (x)| \le \eps^{-\alpha}$ and $|D^2 \Phi (x)| \le
\eps^{-\alpha}$. \item Mark chooses  the new position $y\in B_R
(x)$. \item Helen's score is increased by
$$
\Phi(x)- \Phi(y) - \ep F(t,x, D\Phi(x), D^2 \Phi(x),I_R[x, \Phi])
\, .
$$
Time is reset to $t+ \ep$. Then we repeat the previous steps until
time is larger than $T$. At that time, Helen collects the bonus
$u_T(x)$, where $x$ is the current position of the game.
\end{enumerate}
\end{game}
%---------

\subsubsection{Theorem and comments}
%---------

It is possible to construct a repeated game that approximates a PIDE
where $F$ also depends on $u$ itself, but its formulation is a bit
more complicated. This is important from the point of view of
applications but since, with the previous game at hand, ideas from
\cite{ks} can be applied readily, we prefer to present it in this
simpler framework.

The dynamic programming principle is, in this case,
\begin{multline}
  \label{pdp:pide} \ue(t,x) = \sup_{\stackrel{\Phi \in C^2(\mr^N)}{\|\Phi\|_\infty, |D
      \Phi (x)|, |D^2 \Phi (x)|\le \eps^{-\alpha} } }
  \inf_{y \in B_R (x)} \bigg\{ \ue(t+ \ep,y) \\
  + \Phi(x)- \Phi(y) - \ep F( t,x,D\Phi(x), D^2 \Phi(x), I_R[x,\Phi])
  \bigg\} \, .
\end{multline}

We will give below an easy formal argument that
allows to predict the following convergence result.
%-----------
\begin{theo} \label{theo:conv-pide} Assume that $F$ is elliptic and
  satisfies {\rm (A0)} and {\rm (A1)}. Assume also that $u_T \in
  W^{2,\infty}(\R^N)$. Then the upper (resp. lower) relaxed
  semi-limit $\uB$ (resp. $\ub$) of $(\ue)_{\eps >0}$ is a
  sub-solution (resp. super-solution) of \eqref{eq:pide} and
$$
\uB (T,x) \le u_T (x) \le \ub (T,x) \,.
$$
In particular, if $F$ also satisfies {\rm (A2)}, {\rm (A3)}, then $\ue
$ converges locally uniformly in $\R \times \R^N$ towards
the viscosity solution $u$ of \eqref{eq:pide}, \eqref{eq:tc} as $\eps
\to 0$ and $R\to +\infty$ successively.
\end{theo}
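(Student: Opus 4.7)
The plan is to apply the Barles--Souganidis framework \cite{bs91}: show that the scheme defined by the dynamic programming principle \eqref{pdp:pide} is stable, monotone and consistent with \eqref{eq:pide}, then conclude by the comparison principle.

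\emph{Stability and monotonicity.} First I would establish that $\ue$ is locally uniformly bounded. Helen's trivial strategy $\Phi\equiv 0$ forces the increment to equal $-\ep F(t,x,0,0,0)$, which by (A1) is bounded by $C\ep$; iterating over the $\lceil T/\ep\rceil$ steps gives $\ue(t,x) \ge \inf u_T - CT$. A symmetric argument (Mark can place $y$ at or near a maximum of $\Phi$ to force the swing $\Phi(x)-\Phi(y)$ to be non-positive, and the penalty term is again $O(\ep)$) gives the upper bound. Monotonicity of the scheme in $\ue(t+\ep,\cdot)$ is immediate from \eqref{pdp:pide}, since $\ue(t+\ep,y)$ enters additively.

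\emph{Sub-solution property.} Let $\phi \in C^2_b$ be such that $\uB-\phi$ has a strict global maximum equal to $0$ at $(t_0,x_0)\in(0,T)\times \R^N$. Standard half-relaxed arguments furnish $(t_\ep,x_\ep)\to(t_0,x_0)$ with $(\ue-\phi)(t_\ep,x_\ep)\to 0$ near-maximal. Applying \eqref{pdp:pide} at $(t_\ep,x_\ep)$ with Helen's specific choice $\Phi_\ep$, defined as a $C^2_b$-truncation of $\phi(t_\ep+\ep,\cdot)$ coinciding with $\phi(t_\ep+\ep,\cdot)$ on a large enough ball around $x_\ep$ and satisfying the $\ep^{-\alpha}$ bounds, the differences $\Phi_\ep(x_\ep)-\Phi_\ep(y)$ reproduce $\phi(t_\ep+\ep,x_\ep)-\phi(t_\ep+\ep,y)$. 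Combined with the near-maximality of $\ue-\phi$, this replaces $\ue(t_\ep+\ep,y)$ by $\phi(t_\ep+\ep,y)+o(1)$ in the infimum. Dividing by $\ep$, Taylor-expanding $\phi$ in time, and using the continuity of $F$ yields at the limit $\ep\to 0$ the inequality
\begin{equation*}
-\p_t\phi(t_0,x_0)+F\bigl(t_0,x_0,D\phi(t_0,x_0),D^2\phi(t_0,x_0),I_R[x_0,\phi(t_0,\cdot)]\bigr)\le 0.
\end{equation*}
Sending $R\to+\infty$, using Assumption (A3) together with $I_R[x_0,\phi]\to I[x_0,\phi]$ for $\phi\in C^2_b$, delivers the desired sub-solution inequality for \eqref{eq:pide}.

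\emph{Super-solution property and terminal datum.} The inequality for $\ub$ is obtained symmetrically: near a global minimum of $\ub-\phi$, Mark's near-optimal response is controlled by testing Helen's arbitrary $\Phi$ against $y=x_\ep$, and the worst-case analysis yields the reverse PDE inequality in the limit. For the terminal values $\uB(T,x)\le u_T(x)\le \ub(T,x)$, I would use that $u_T\in W^{2,\infty}(\R^N)$ to let Helen (resp.\ Mark) play $\Phi$ close to $u_T$ in a boundary layer $T-\ep\le t\le T$, so that the payoff collected during the last few steps is $u_T(x)+o(1)$. \emph{Conclusion.} Under (A2)--(A3), the comparison principle of \cite{barlesimbert} for \eqref{eq:pide},\eqref{eq:tc} then gives $\uB\le \ub$ on $(0,T]\times\R^N$; the reverse is automatic, so $\uB=\ub=:u$ is the unique continuous viscosity solution and $\ue\to u$ locally uniformly as $\ep\to 0$ and then $R\to+\infty$.

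The main obstacle will be the consistency step. The constraint that $\Phi$ satisfy $\|\Phi\|_\infty,|D\Phi(x)|,|D^2\Phi(x)|\le\ep^{-\alpha}$ prevents Helen from plugging in the test function $\phi$ directly, and forces a careful truncation whose effect on the non-local term $I_R[x_\ep,\Phi_\ep]$ must be shown to vanish as $\ep\to 0$. This is precisely where the growth restriction (A1) combined with the choice $\alpha<(\max(1,k_1,k_2))^{-1}$ is used: the penalty $\ep F(t_\ep,x_\ep,D\Phi_\ep(x_\ep),D^2\Phi_\ep(x_\ep),I_R[x_\ep,\Phi_\ep])$ involves a term of size at most $\ep(1+\ep^{-\alpha k_1}+\ep^{-\alpha k_2})$, which tends to $0$ thanks to that exponent bound.
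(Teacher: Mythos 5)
Your overall framework (Barles--Souganidis: stability, monotonicity, consistency, then comparison) coincides with the paper's, but there is a genuine gap at the heart of the consistency step, and the ``main obstacle'' you single out is not the real one. For a \emph{fixed} test function $\phi\in C^2_b$, the constraints $\|\Phi\|_\infty,|D\Phi(x)|,|D^2\Phi(x)|\le\ep^{-\alpha}$ are automatically satisfied once $\ep$ is small, so no truncation is needed and the inequality $\Sc^\eps[\psi](t,x)\ge \psi(x)-\ep F(t,x,D\psi(x),D^2\psi(x),I_R[x,\psi])$ (Helen plays $\Phi=\psi$) is immediate. The real difficulty is the reverse inequality $\Sc^\eps[\psi](t,x)\le \psi(x)-\ep F(t,x,D\psi(x),D^2\psi(x),I_R[x,\psi])+o(\ep)$: one must show that for \emph{every} admissible $\Phi$ --- including functions whose derivatives are as large as $\ep^{-\alpha}$ and which look nothing like $\psi$ --- Mark has a response $y\in B_R(x)$ capping Helen's gain at what she gets by playing $\psi$, up to $o(\ep)$. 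This is the paper's crucial Lemma~\ref{lf1}; its proof is a contradiction argument which extracts, from the failure of the conclusion, the estimates $|D(\Phi-\psi)(x)|=O(\ep^{1/2})$, $D^2(\Phi-\psi)(x)\le O(\ep^{1/6})\,I$ and, after integrating the pointwise inequality against $\nu$, $I_R[x,\Phi]\le I_R[x,\psi]+O(\ep^{1/6})$, and then invokes the ellipticity condition \eqref{cond:ellip}. Nothing in your proposal plays this role.

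This gap is not cosmetic: it breaks three of your steps. (i) \emph{Sub-solution property.} At a maximum of $\uB-\phi$ one obtains $\phi(t_\ep,x_\ep)\le\Sc^\eps[\phi(t_\ep+\ep,\cdot)](t_\ep,x_\ep)$, so what is needed is an \emph{upper} bound on the supremum over $\Phi$; plugging in Helen's specific choice $\Phi_\ep\approx\phi(t_\ep+\ep,\cdot)$ only lower-bounds that supremum and yields nothing. (It is the super-solution case, at a minimum of $\ub-\phi$, that is disposed of by this easy choice; your sketch for the super-solution, taking $y=x_\ep$ in the infimum, likewise goes the wrong way.) (ii) \emph{Stability.} Mark placing $y$ near a maximum of $\Phi$ over $B_R(x)$ does make $\Phi(x)-\Phi(y)\le0$, but the remaining penalty $-\ep F(\dots)$ is only $O(\ep^{\gamma})$ with $\gamma=1-\alpha\max(1,k_1,k_2)<1$ for admissible $\Phi$ (Lemma~\ref{lem:tech2}); summed over $T/\ep$ steps this diverges, so even the finiteness of $\uB$ requires the crucial lemma, which is how the paper proceeds in Proposition~\ref{pro:icpide} (induction on the number of steps with $\psi=u_T^\eta$, yielding a per-step error $O(\ep)$). (iii) The upper half of the terminal estimate, $\uB(T,x)\le u_T(x)$, fails for the same reason. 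A minor further difference: the paper does not send $R\to+\infty$ inside the viscosity inequality as you do, but first proves convergence, for fixed $R$, to the solution of the equation with truncated operator $I_R$, and only then lets $R\to\infty$ using the stability results of \cite{barlesimbert}.
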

%---------
\begin{remark}
  As we mentioned, the second statement follows from the fact that
  (A2)--(A3) together with (A0) imply that the comparison principle
  for \eqref{eq:pide} holds true in the class of bounded functions.
\end{remark}
%------------
\begin{remark}\label{rem:stronger-result}
  We are in fact going to prove that under the same assumptions, $\ue$
  converges locally uniformly in $\R \times \R^N$ as $\eps \to 0$
  towards the viscosity solution of \eqref{eq:pide}, \eqref{eq:tc}
  where $I$ is replaced with the truncated integral operator.
  Theorem~\ref{theo:conv-pide} is then a direct consequence of this
  fact by using stability results such as the ones proved in
  \cite{barlesimbert}.
\end{remark}
\begin{remark}
 We assume that $u_T$ lies in $W^{2,\infty}(\R^N)$ for simplicity but
one can consider terminal data that are much less regular, for
instance bounded and uniformly continuous. However, this implies
further technicalities that we prefer to avoid here.
\end{remark}
%------------------------------------------
\begin{proof}[Formal argument for Theorem~\ref{theo:conv-pide}]
  We assume that $u^\eps$ is smooth.  It is enough to understand why
  the following equality holds true
\begin{equation}\label{eq:formal}
u^\eps (t,x) =  u^\eps (t+\eps, x) - \eps F (t,x, D u^\eps (t+\eps,x), D^2 u^\eps (t+\eps, x),
I [x, u^\eps (t+\eps, \cdot)]) +o(\eps) .
\end{equation}
Indeed, after rearranging terms, dividing by $\eps$ and  passing to the limit, we get
$$
 - \partial_t u (t,x) + F(t, x, Du (t,x) ,D^2 u (t,x), I [x,u(t,\cdot)]) = 0 \, .
$$
It is easy to  see that if Helen chooses $\Phi = \ue(t+\ep, \cdot)$, Mark cannot
  change the score by acting on $y$. Indeed, the dynamic programming
  principle implies that $u^\eps(t,x)$ is larger than the right-hand side of
\eqref{eq:formal}.

It turns out that it is optimal for Helen to choose $\Phi
=\ue(t+\eps,\cdot)$.  In other words, the converse inequality holds
true (and thus \eqref{eq:formal} holds true too). To see this,
the dynamic programming principle tells us that it is enough to prove
that, for $\Phi \in C^2 (\R^N)$ fixed (with proper bounds), we have
\begin{multline*}
  \inf_{y \in B(x,R)} \{ u^\eps (t+\eps,y) + \Phi (x) - \Phi(y) - \eps
  F(t,x,D\Phi(x),D^2 \Phi(x), I[x,\Phi]) \} \\\le u^\eps (t+\eps, x) -
  \eps F (t,x, D u^\eps (t+\eps,x), D^2 u^\eps (t+\eps, x), I [x,
  u^\eps (t+\eps, \cdot)])+o(\eps) .
\end{multline*}
The following crucial lemma permits to conclude.  We recall that we assume that the
singular measure is supported in $B(0,R)$ for some $R>0$.
%-------------------------------------------
\begin{lem}[Crucial lemma for PIDE]\label{lf1}
  Let $F$ be continuous and $\Phi, \psi\in C^2(\mr^N)$ be two bounded
  functions.  Let $K$ be a compact subset of $\mr^N$ and let $x \in
  K$.  For all $\eps >0$, there exists $y=y_\eps \in B_R(x)$ such
  that
\begin{multline}\label{lf2}
\psi(y) + \Phi(x)- \Phi(y)
- \ep F(t,x,D\Phi(x),D^2 \Phi(x),I_R[x,\Phi]) \\
\le  \psi(x) - \eps F(t,x,D\psi(x), D^2 \psi(x), I_R[x,\psi]) +o(\ep)
\end{multline}
where the $o(\eps)$ depends on $F, \psi,\Phi$ and $K$ but not on $t,x,y$.
\end{lem}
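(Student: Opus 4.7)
The plan is to have Mark choose $y$ to be a (near-)minimizer of $w:=\psi-\Phi$ on $\overline{B_R(x)}$. Setting $j(x):=\min_{\overline{B_R(x)}}w-w(x)\le 0$ and $\Delta F(x):=F(t,x,D\Phi(x),D^2\Phi(x),I_R[x,\Phi])-F(t,x,D\psi(x),D^2\psi(x),I_R[x,\psi])$, the identity $\psi(y)+\Phi(x)-\Phi(y)=\psi(x)+(w(y)-w(x))$ reduces \eqref{lf2} to
\[
j(x)-\eps\,\Delta F(x)\le o(\eps)\qquad\text{uniformly in }x\in K.
\]

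The crux is the following observation, which invokes the ellipticity \eqref{cond:ellip}: if $j(x)=0$, then $w$ attains its minimum over $\overline{B_R(x)}$ at the interior point $x$, so $Dw(x)=0$, $D^2w(x)\ge 0$, and, because the corrector $Dw(x)\cdot z\,\mathbf{1}_B(z)$ vanishes,
\[
I_R[x,w]=\int_{B_R}[w(x+z)-w(x)]\,\nu(dz)\ge 0.
\]
Reading these back through $w=\psi-\Phi$ gives $D\Phi(x)=D\psi(x)$, $D^2\Phi(x)\le D^2\psi(x)$, and $I_R[x,\Phi]\le I_R[x,\psi]$, whence ellipticity yields $\Delta F(x)\ge 0$. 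Contrapositively, $\Delta F(x)<0$ forces $j(x)<0$ strictly.

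To promote this pointwise dichotomy to the required uniform bound, I would first check that $j$ and $\Delta F$ are continuous on $K$: for $j$ via continuity of $w$ and of $\overline{B_R(x)}$ in $x$; for $\Delta F$ via continuity of $F$, of the derivatives of $\psi,\Phi$, and of $x\mapsto I_R[x,\Phi],\,I_R[x,\psi]$ (the last by dominated convergence, using $\Phi,\psi\in C^2_b$ together with \eqref{cond:nuPIDE}). A contradiction-plus-compactness argument then yields, for each $\gamma>0$, some $c=c(\gamma)>0$ such that $j(x)\ge -c\Rightarrow\Delta F(x)\ge -\gamma$ on $K$: otherwise one extracts $x_n\to x^\star$ with $j(x_n)\to 0$ and $\Delta F(x_n)<-\gamma$, contradicting the key observation at $x^\star$. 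Splitting $K$ into $\{j\ge -c\}$ and $\{j<-c\}$ one obtains $j-\eps\Delta F\le\eps\gamma$ on the first set and $j-\eps\Delta F\le -c+\eps\sup_K|\Delta F|\le 0$ on the second once $\eps\le c/\sup_K|\Delta F|$. Hence $\sup_K[j-\eps\Delta F]\le\eps\gamma$ for all small $\eps$; since $\gamma>0$ is arbitrary, this is the desired $o(\eps)$ bound, uniform in $x\in K$ and in $t$ on bounded intervals.

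The main obstacle is the key observation: one must recognize that interior minima of $\psi-\Phi$ at $x$ are the only obstructions, and that ellipticity must be applied \emph{jointly} in the Hessian and non-local slots there. A minor technicality is that the exact minimizer of $w$ on $\overline{B_R(x)}$ may sit on $\partial B_R(x)$; this is absorbed by taking instead any $y_\eps\in B_R(x)$ with $w(y_\eps)\le\min_{\overline{B_R(x)}}w+\eps^2$, which perturbs $j(x)$ by at most $\eps^2=o(\eps)$.
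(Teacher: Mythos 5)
Your proof is correct, but it follows a genuinely different route from the paper's. The paper argues by contradiction at the fixed point $x$: assuming \eqref{lf2} fails by $\eta\eps$ for every $y$, it tests the resulting inequality with $y=x+\eps^{1/2}w$, $y=x+\eps^{1/3}w$ and $y=x+z$ to extract the \emph{quantitative} estimates $|D(\Phi-\psi)(x)|\le C\eps^{1/2}$, $D^2(\Phi-\psi)(x)\le O(\eps^{1/6})$ and $I_R[x,\Phi]\le I_R[x,\psi]+C\eps^{1/6}$, and then concludes via ellipticity and continuity of $F$ that $\Delta F\ge o(1)$, contradicting the choice $y=x$. You instead make Mark play a near-minimizer of $w=\psi-\Phi$ on $\overline{B_R(x)}$ and run a soft compactness argument: either the minimum gives a definite gain $j(x)<-c$ that beats $O(\eps)$, or $x$ is nearly an interior minimum of $w$, and at a limit point the \emph{exact} first- and second-order conditions plus $I_R[x^\star,w]\ge 0$ combine with ellipticity to force $\Delta F\ge 0$. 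Both proofs rest on the same mechanism (the one sketched informally in the paper right after the lemma), but yours avoids the multi-scale Taylor expansions at the price of losing any explicit rate (the paper's argument effectively yields an $O(\eps^{1+1/6})$-type error, yours only $o(\eps)$), while making the uniformity in $(t,x)\in[0,T]\times K$ cleaner --- a point the paper's own proof, whose constants nominally depend on $x$, glosses over. Two cosmetic remarks: the lemma only assumes $\Phi,\psi$ bounded and $C^2$, not $C^2_b$, but since $x$ ranges over the compact $K$ and $|z|\le R$, the Hessian bounds you need for dominated convergence hold on $K+\overline{B_R}$; and your handling of a boundary minimizer by an $\eps^2$-optimal interior point is fine.
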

%-------------------------------------
The rigourous proof of this lemma is postponed until
Subsection~\ref{subsec:conv-pide}. However, we can motivate
this result by giving a (formal) sketch of its proof. Assume that the
conclusion of the lemma is false. Then there exists $\eta >0$ and
we have for all $y \in K$
$$
\psi (y)- \psi (x) > \Phi (y) - \Phi (x) + \eps (F(\dots) - F(\dots)) + \eta \eps  .
$$
In particular, $\psi (y)- \psi (x) > \Phi (y) - \Phi (x) + O(\eps)$.
This implies (at least formally)
\begin{eqnarray*}
D \psi (x) &= D \Phi (x) + o (1) \\
D^2 \psi (x) &\le D^2 \Phi (x) + o (1) \\
I [x,\psi] &\le I [x,\Phi] + o (1) .
\end{eqnarray*}
Then the ellipticity of $F$ implies that $F(\dots) - F(\dots) \ge o(1)$ and
we get the following contradiction: $0 \ge \eta o(1) + \eta \eps$.
\end{proof}

One can observe that this very simple game is a natural generalization
of the game constructed in \cite{ks} for fully non-linear parabolic
equations. Indeed, if $F$ does not depend on $I[\Phi]$, then all is
needed is proxies for $D\Phi(x), D^2\Phi(x)$. So instead of choosing a
whole function $\Phi$, Helen only needs to choose a vector $p$ (proxy
for $D\Phi(x)$) and a symmetric matrix $\Gamma$ (proxy for
$D^2\Phi(x)$), and replace $\Phi(y)- \Phi(x) $ in the score updating
by its quadratic approximation $$ p \cdot (y-x) + \hal\langle \Gamma
(y-x), (y-x)\rangle.$$ One then recovers the game of \cite{ks} (except
there $y$ is constrained to $B_{\ep^{1-\a}} (x)$). Of course it is
natural that for a non-local equation, local information at $x$ does
not suffice and information in the whole space needs to be collected
at each step.

\subsection{Level-set approach to geometric  motions}

Before stating our results for the geometric flows (eikonal equations
and integral curvature flow), we recall the level set framework for
such geometric evolutions.

The level-set approach \cite{os,cgg,evansspruck} consists in defining
properly motions of interfaces associated with geometric laws.  More
precisely, given an initial interface $\Gamma_0$, \textit{i.e.} the
boundary of a bounded open set $\Omega_0$, their time evolutions
$\{\Gamma_t\}_{t>0}$ and $\{\Omega_t\}_{t>0}$ are defined by
prescribing the velocity $V$ of $\Omega_t$ at $x \in \Gamma_t$ along
its normal direction $n(x)$ as a function of time $t$, position $x$,
normal direction $n(x)$, curvature tensor $Dn(x)$, or even the whole
set $\Omega_t$ at time $t$. The geometrical law thus writes
\begin{equation}\label{eq:geom-law}
V = G(t,x,n(x), Dn(x) , \Omega_t) \; .
\end{equation}
The level-set approach consists in describing $\Gamma_0$ and
$\{\Gamma_t\}_{t>0}$ as zero-level sets of continous functions $u_0$
(such as the signed distance function to $\Gamma_0$) and $u(t,\cdot)$
respectively
\begin{eqnarray*}
\Gamma_0 = \{ x \in \mr^N : u_0 (x) =0 \}
\quad & \mbox{ and } &\quad
\Omega_0 = \{ x \in \mr^N : u_0 (x) >0 \} \\
\Gamma_t = \{ x \in \mr^N : u (t,x) =0 \}
\quad & \mbox{ and }  & \quad
\Omega_t = \{ x \in \mr^N : u (t,x) >0 \} \; .
\end{eqnarray*}
The geometric law~\eqref{eq:geom-law}  translates into a fully non-linear
parabolic equation for $u$:
\begin{equation}\label{eq:geom}
  \partial_t u  =  G (t,x, \widehat{Du}, (I - \widehat{Du}\otimes \widehat{Du}) D^2 u, \Omega_t) |Du|
  := - F(t,x,Du, D^2 u, \Omega_t)
\end{equation}
(where $\hat{p}= |p|^{-1} p$ for $p \in \mr^N$, $p \neq 0$)
supplemented with the initial condition $u(0,x) = u_0 (x)$.  If proper
assumptions are made on the nonlinearity $F$, the level-set approach
is \emph{consistent} in the sense that, for two different initial
conditions $u_0$ and $v_0$ with the same $0$-level set, the associated
(viscosity) solutions $u$ and $v$ have the same zero-level sets at all
times as well. The interested reader is referred to
\cite{os,cgg,evansspruck} for fundamental results, \cite{bss} for
extensions and \cite{souganidis} for a survey paper.  \medskip

In the present paper, we deal with terminal conditions instead of
initial conditions. This is the reason why, for a given terminal time
$T>0$, we consider the equation $-\partial_t u + F = 0$ supplemented
with the terminal condition~\eqref{eq:tc}.  We will consider two
%three
special cases of \eqref{eq:geom}
\begin{itemize}
\item the eikonal equation
\begin{equation}\label{eq:eikonal}
-\partial_t u - \v(x) |Du| =0
\end{equation}
\item and the integral curvature equation
\begin{equation}\label{eq:dislo}
-\partial_t u - \kappa [x,u] |Du | =0
\end{equation}
where $\kappa [x,u]$ is the integral curvature of $u$ at $x$
(see below for a definition).
\end{itemize}

\subsection{Eikonal equation}

The first geometric law \eqref{eq:geom-law} we are interested in is
the simple case where $V = \v(x)$ and
$$
\v : \R^N \to \R \text{ is a Lipschitz continuous function}
$$
and we do not assume that it has a constant sign. In this case, the
geometric equation~\eqref{eq:geom} reduces to the standard eikonal
equation \eqref{eq:eikonal}.

The solution of an eikonal equation can be represented as the value
function of a deterministic control problem when $\v$ has a constant
sign \cite{lions82}. If $v$ changes sign, it can be represented as the
value function of a deterministic differential game problem,
\textit{i.e.}, loosely speaking, a control problem with two opposing
players \cite{evanssouganidis}.

We recall the definition of a viscosity solution to the eikonal
equation
(\ref{eq:eikonal}).
%-------------------------------------------------------------------------
\begin{defi}[Viscosity solution for (\ref{eq:eikonal})] \label{defi2}
  Given a function $u: (0,T) \times \mr^N\to \mr$, we say that
\begin{enumerate}
\item It is a \emph{viscosity sub-solution} of (\ref{eq:eikonal}) if it is
  upper semi-continuous and if for every test-function $\phi \in C^2$
  such that $u-\phi$ admits a local maximum  at $(t,x)\in
  (0,T)\times \mr^N$, we have
    \begin{equation}
    -\p_t \phi(t,x) - v(x) |\nab \phi|(t,x) \le 0.\end{equation}
\item It is a \emph{viscosity super-solution} of (\ref{eq:eikonal}) if it is
  lower semi-continuous and if for every test-function $\phi \in C^2$
  such that $u-\phi$ admits a local minimum  at $(t,x) \in (0,T)
  \times \mr^N$, we have
  \begin{equation}
    -\p_t \phi(t,x) - v(x) |\nab\phi|(t,x) \ge 0.\end{equation}
\item It is a viscosity solution of (\ref{eq:eikonal}) if it is both a
  sub and super-solution.
\end{enumerate}
\end{defi}
%--------------------------------------------------------------------
Here, we construct a semi-discrete approximation by discretizing time.
We use two opposing players Paul and Carol: Paul can take advantage of
$\v \ge 0$ to move, while when $\v \le 0$ it is Carol who takes
advantage to move (in the opposite direction).  We recall that $(\cdot)_+$
denotes the positive part and $(\cdot )_-$ the negative part of a
quantity.

\subsubsection{The game for the eikonal equation}

In order to describe the game, we introduce the following cut-off
function: for $\eps >0$ and $r>0$, we define
\begin{equation}\label{cep}
C_\eps (r) = (r \vee \eps^{\frac32})\wedge \eps^{\frac12} =
\left \{ \begin{array}{ll} \eps^{\frac32} & \text{if } 0 < r < \eps^{\frac32} , \\
r & \text{if } \eps^{\frac32} < r < \eps^{\frac12} , \\ \eps^{\frac12} &
\text{if } r > \ep^{\frac12} . \end{array} \right.
\end{equation}
This function is non-decreasing and  for every $r$ we have
$\eps^{\frac32} \le C_\eps(r) \le \eps^{\frac12}$.
%------------------------------
\begin{game}[Eikonal equation] \label{game:eik} At time $t \in (0,T)$,
  Paul starts at $x$ with zero score. His objective is to get the
  highest final score.
\begin{enumerate}
\item Either $B_\ep(x)\cap \{ \v > 0\}\neq  \varnothing$, then
Paul chooses a point  ${x}_P \in B_\ep(x)\cap \{ \v> 0\}$ and time
gets reset to $t_P = t + C_\eps [ \eps (\v_+(x_P))^{-1} ]$.
\\ Or
$B_\ep(x)\cap \{ \v > 0\}=\varnothing$, then
 Paul stays at $x_P=x$  and time gets reset
to $t_P = t+ \eps^{2}$.

\item Either $B_\ep(x_P)\cap \{ \v< 0\}\neq  \varnothing$, then
Carol chooses a point  ${x}_C \in B_\ep(x_P)\cap \{ \v< 0\}$ and
time gets reset to $t_C = t_P + C_\eps [ \eps (\v_-(x_C))^{-1} ]$.
\\ Or
$B_\ep(x_P)\cap \{ \v< 0\}= \varnothing$, then
 Paul stays at $x_C=x_P$  and time gets reset
to $t_C = t_P+ \eps^{2}$. \item The players repeat the two previous steps
until $t_C \ge T$. Paul's final score is $u_T (x_C)$ where $x_C$ s the final position of the game.
\end{enumerate}
\end{game}
\subsubsection{Result and remarks}
%-------------------------------

The previous game can be translated as follows: let  for short $E^+$
and $E^-$ denote the sets
\begin{equation}\label{EP} E^\pm(x)=
  \begin{cases} B_\ep(x) \cap\{ \pm v>0\} & \text{ if }  B_\ep(x)
    \cap \{ \pm v>0\}
    \neq \varnothing\\
    \{x\}  & \ \text{if not.}
\end{cases}
\end{equation}
   With this notation
\begin{equation}\label{pdp:eikonal}
  \ue(t,x)=  \sup_{ x_P\in   E^+(x)}
  \left\{ \inf_{ x_C \in  E^-(x_P) }
    \left\{ \ue (t_C,x_C) \right\} \right\},
\end{equation}
where
\begin{equation}\label{eq:timereset}
  \left\{\begin{array}{l} t_P =  t + \left\{ \begin{array}{ll} C_\ep
          [ \eps (\v_+(x_P))^{-1} ] & \mbox{ if } B_\ep(x) \cap\{ \v>0\}
          \neq \varnothing
          \\
          \eps^{2} & \mbox{ if not }
        \end{array}\right.  \\
      t_C = t_P + \left\{ \begin{array}{ll}
          C_\ep[ \eps (\v_-(x_C))^{-1} ]  & \mbox{ if } B_\ep(x_P) \cap \{ v<0\}\neq \varnothing\\
          \eps^{2} & \mbox{ if not}
        \end{array}\right.
\end{array}\right.
\end{equation}
and
$$
\ue (t,x) = u_T (x) \text{ if } t \ge T \; .
$$

We will refer to \eqref{pdp:eikonal} as the dynamic programming
principle for Game~\ref{game:eik} even if only one time step is
considered. We next claim that the following convergence result holds
true; again the limiting equation can be predicted by a formal
argument from (\ref{pdp:eikonal}) (see below).
%-----------
\begin{theo} \label{theo:conv-eikonal} Assume that $\v$ is
  Lipschitz continuous and $u_T$ is bounded and Lipschitz
  continuous. Then the function $\ue$ converges locally uniformly as
  $\ep \to 0$ towards the unique viscosity solution of
  \eqref{eq:eikonal}, \eqref{eq:tc}.
\end{theo}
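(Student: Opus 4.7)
The plan is to apply the Barles--Souganidis framework. Uniform boundedness $\|\ue\|_\infty \le \|u_T\|_\infty$ is immediate since Paul's final payoff is $u_T(x_C)$. I introduce the half-relaxed semi-limits $\uB(t,x)=\limsup_{(\ep,s,y)\to(0,t,x)}\ue(s,y)$ and $\ub(t,x)=\liminf_{(\ep,s,y)\to(0,t,x)}\ue(s,y)$ and aim to show that $\uB$ is a viscosity sub-solution and $\ub$ a super-solution of \eqref{eq:eikonal} satisfying $\uB(T,\cdot)\le u_T\le \ub(T,\cdot)$. The standard comparison principle for the eikonal equation with Lipschitz velocity and Lipschitz terminal datum will then yield $\uB\le \ub$ hence $\uB=\ub=u$, and locally uniform convergence follows by the standard semi-limit argument.

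The heart of the proof is the consistency step for the sub-solution property (the super-solution case is entirely symmetric, exchanging the roles of Paul and Carol and the sup/inf in \eqref{pdp:eikonal}). Given $\phi\in C^2$ such that $\uB-\phi$ achieves a strict local maximum $0$ at $(t_0,x_0)$, standard stability produces $(t^\ep,x^\ep)\to(t_0,x_0)$ at which $\ue-\phi$ has a local maximum of value $o(1)$, and the local-max property together with the dynamic programming principle \eqref{pdp:eikonal} gives
\begin{equation*}
\phi(t^\ep,x^\ep)\le \sup_{x_P\in E^+(x^\ep)}\inf_{x_C\in E^-(x_P)}\phi(t_C,x_C)+o(\ep).
\end{equation*}
I then split by the sign of $v(x_0)$. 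If $v(x_0)>0$, continuity of $v$ forces $E^+(x^\ep)=B_\ep(x^\ep)$ and $E^-(x_P)=\{x_P\}$ for $\ep$ small; Paul's quasi-optimal choice $x_P\simeq x^\ep+\ep\, D\phi(t_0,x_0)/|D\phi(t_0,x_0)|$ (the case $D\phi=0$ is trivial) combined with the time increment $t_C-t^\ep=\ep/v(x_P)+\ep^2$ and a first-order Taylor expansion, after division by $\ep$, delivers $\partial_t\phi+v(x_0)|D\phi|\ge 0$ at $(t_0,x_0)$. The case $v(x_0)<0$ is symmetric, with Paul stuck ($E^+(x^\ep)=\{x^\ep\}$) and Carol moving opposite to $D\phi$. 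The degenerate case $v(x_0)=0$ is precisely the one the cut-off $C_\ep$ in \eqref{cep} is designed to accommodate: the Lipschitz bound on $v$ yields $|v|\le L\ep$ on $B_\ep(x^\ep)$, so $\ep/v_\pm(x)\ge 1/L>\ep^{1/2}$ for $\ep$ small and $C_\ep$ saturates at its upper value $\ep^{1/2}$; the Taylor expansion divided by $\ep^{1/2}$ collapses into $\partial_t\phi(t_0,x_0)\ge 0$, which coincides with $-\partial_t\phi-v(x_0)|D\phi|\le 0$ since $v(x_0)=0$.

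For the terminal condition, given $(t^\ep,x^\ep)\to(T,x)$ realising $\uB(T,x)$, the game terminates at some $t_C\in[T,T+\ep^{1/2}]$ (the upper cap of $C_\ep$ controls the final overshoot). Each move of Paul or Carol displaces by at most $\ep$ over a time increment whose effective speed is bounded by $\|v\|_\infty\vee\ep^{1/2}$, so $|x_C-x^\ep|\le C(|T-t^\ep|+\ep^{1/2})$; Lipschitz continuity of $u_T$ then forces $\ue(t^\ep,x^\ep)=u_T(x_C)\to u_T(x)$, and the argument for $\ub$ is identical. The main obstacle is the degenerate case $v(x_0)=0$ and the transitions across $\{v=0\}$, which explain the asymmetric form of the cut-off \eqref{cep} (and the auxiliary lower cap $\ep^{3/2}$, which caps the total number of steps by $O(\ep^{-3/2})$ and makes the terminal overshoot bound rigorous); secondary technical issues are that the $\sup$ and $\inf$ in \eqref{pdp:eikonal} are generally not attained, handled by $\eta$-optimal selectors, and the bookkeeping of error terms across the two players' moves in a single round.
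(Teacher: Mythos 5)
Your route is the paper's route: uniform bounds, half-relaxed limits, a consistency estimate split on the sign of $v(x_0)$ with the cut-off $C_\ep$ absorbing the degenerate case, a terminal-condition estimate via a displacement bound along near-optimal trajectories, and the comparison principle. Two of your steps, however, do not go through as written. In the degenerate case $v(x_0)=0$ you assert $|v|\le L\ep$ on $B_\ep(x^\ep)$ and conclude that $C_\ep$ saturates at $\ep^{1/2}$. Lipschitz continuity only gives $|v|\le L(|x^\ep-x_0|+\ep)$, and $|x^\ep-x_0|$ need not be $O(\ep)$, so all three branches of $C_\ep$ can occur. The conclusion $\p_t\phi(t_0,x_0)\ge 0$ is still reachable because $v_\pm=o(1)$ near $x_0$ whichever branch is active, but one must then reconcile the different positive time increments attached to Paul's and Carol's moves (and to the supremum over $x_P$ versus a particular near-optimal $x_P$); this is exactly why the paper's Lemma~\ref{lem:consis-eik2} introduces the constants $m_\ep$, $M_\ep$ and discusses the sign of $\p_t\phi$ before dividing.

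The more serious gap is the terminal condition. You bound the effective speed of each move by $\|v\|_\infty\vee\ep^{1/2}$, but $v$ is only assumed Lipschitz, hence possibly unbounded; and wherever $|v|>\ep^{-1/2}$ the \emph{lower} cap of $C_\ep$ is active, so a move of size $\ep$ costs only $\ep^{3/2}$ of time and the effective speed is $\ep^{-1/2}$, which blows up. The estimate $|x_C-x^\ep|\le C(T-t^\ep+\ep^{1/2})$ therefore does not follow move by move: one must first show that the near-optimal trajectory stays in a fixed ball where $|v|\le C_R$, so that each genuine move costs time at least $\ep/C_R$. The paper does this by a bootstrap (proof of Lemma~\ref{lem:tc}): if the trajectory left $B_{R/2}(x)$ it would take at least $R/(4\ep)$ moving steps, hence elapsed time at least $R/(4C_R)$, which exceeds $T-t$ once $\delta<R/(4C_R)$ --- a contradiction. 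Relatedly, the role you assign to the cap $\ep^{3/2}$ (bounding the number of steps to make the overshoot rigorous) is not the operative mechanism: termination and the $\ep^{1/2}$ overshoot come from the $\ep^2$ floor and the $\ep^{1/2}$ ceiling respectively, while the missing ingredient is precisely this localization. With these two repairs your argument coincides with the paper's proof.
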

%--------
\begin{remark}
  Let us mention that the parameters $\alpha= \frac12$ and $\beta=\frac32$ in
  the definition of $C_\ep$ \eqref{cep} really only need to satisfy $1< \alpha < \beta <
  2$.
\end{remark}
We next give the formal argument which permits to predict the
convergence result for the eikonal equation.
\begin{proof}[Formal argument for Theorem~\ref{theo:conv-eikonal}]
  We first rewrite the dynamic programming principle as follows
\begin{equation*}
  0=\sup_{ x_P \in E^+ (x) } \bigg\{\ue
  (t_P,x_P) - \ue (t,x)  + \inf_{ x_C \in E^-(x_P) } \left\{ \ue (t_C,x_C) -\ue
    (t_P,x_P)\right\}\bigg\} \, .
\end{equation*}

We only treat the case $v(x) >0$ because  the argument is completely
analogous in the case $v(x)<0$. Hence, for $\eps$ small enough,
$B_\eps(x) \subset \{ v>0\}$, $B_\eps(x_P) \cap \{ v<0 \}
=\varnothing$ and $(t_C,x_C)=(t_P+\eps^2,x_P)$. The previous
equality then yields (approximating $C_\ep(r) $ by $r$)
\begin{eqnarray*}
0 &=& \sup_{ x_P \in  B_\ep(x) } \bigg\{\ue (t_P,x_P) - \ue (t,x)  + O(\ep^2) \bigg\} \\
&=& \sup_{x_P \in B_\eps (x)} \bigg\{ \partial_t \ue (t,x) (t_P -t) + D \ue (t,x) (x_P-x) \bigg\} + o (\eps)\\
&=& \frac{\eps}{v(x_P)} ( \partial_t \ue (t,x) + v(x_P) |D \ue (t,x)| ) + o(\eps) \\
&=&\frac{\eps}{v(x_P)} ( \partial_t \ue (t,x) + v(x) |D \ue (t,x)| ) + o(\eps) \, .
\end{eqnarray*}
Hence,  dividing by $\eps/v(x_P)$ and  letting $\eps \to 0$, we obtain
formally
$$
\partial_t u (t,x) + v(x) |D u | (t,x) =0 \, .
$$
\end{proof}

\subsection{Integral curvature flow}

\subsubsection{Definitions}

Even if the authors do not use this word, the notion of integral curvature
is considered in papers such as
\cite{dfm,fim,cafsoug,imbert,cafroqsav}. Here is the definition we will take.

Consider a function $K: \R^N \to (0,+\infty)$ such that
\begin{equation}
\label{cond:N}
\left\{ \begin{array}{l}
K \text{ is even, supported in } B_R(0) \\
 K\in W^{1,1} (\R^N \setminus B_\delta(0)) \text{ for all } \delta >0 \\
\int_{B_\delta(0)} K = o \left( \frac1\delta \right) \\
\int_{\mathcal{Q}(r,e)} K < +\infty \text{ for all } r>0, e \in \mathbb{S}^{N-1} \\
\int_{\mathcal{Q}(r,e)} K = o \left( \frac1{r} \right)
\end{array} \right.
\end{equation}
where $\mathcal{Q} (r,e)$ is a paraboloid defined as follows
$$
\mathcal{Q} (r,e) = \{ z \in \R^N : r |z \cdot e| \le |z-(z\cdot e)e |^2 \} .
$$
%------------
Interesting examples of such $K$'s include
$$
K(z) = \frac{C(z)}{1 + |z|^{N+\alpha}} \quad
\text{ or } \quad  K(z) =\frac{C(z)}{|z|^{N+\alpha}}
$$
for some  cut-off function $C:\R^N \to \R$ which is even, smooth and supported in $B_R(0)$.

%---------
\begin{remark}
  It is not necessary to assume that $K$ has a compact support in
  order to define the non-local geometric flow. However, we need this
  assumption in order to construct the game and prove that it
  approximates the geometric flow. We can then later follow what we
  did when dealing with PIDE: approximate any integral curvature flow
  by first approximating $K$ by kernels $K^R$ äcompactly supported in
  $B_R(0)$ and by taking next the limit of the corresponding value
  functions as $\ep \to 0$ and $R \to \infty$ respectively.
\end{remark}

Consider $U \in C^2$ such that $DU(x) \neq 0$.
We define
\begin{eqnarray*}
\k^* [x,U] = K * \un_{\{ U \ge U (x) \}} -  K * \un_{\{ U < U (x) \}} \\
\k_* [x,U] = K* \un_{\{ U > U (x) \}} -  K * \un_{\{ U \le U (x) \}}.
\end{eqnarray*} These functions coincide if for instance $DU \neq 0$
on $\{U = U(x)\}$. They define the integral curvature of the
``hypersurface'' $\{ U(z) = U(x) \}$ at the point $x$.  The reader can
notice that this ``hypersurface'' is oriented \textit{via} the sign of
the function $U$. The classical curvature can be recovered if
$K(z) = \frac{1-\alpha}{|z|^{N+\alpha}}$ and $\alpha \to 1$, $\alpha <1$; see \cite{imbert}.

Functions $\k^*$ and $\k_*$ enjoy the following properties (see \cite{imbert}):
\begin{enumerate}
\item  {\sc Semi-continuity:} functions $\k^* [\cdot,U]$ and
$\k_*[\cdot,U]$ are respectively upper and lower
semi-continuous
\begin{eqnarray}
\label{eq:usc-courbure}
\k^* [x,U] &\ge& \limsup_{y \to x} \k^*[y,U]; \\
\label{eq:lsc-courbure}
\k_* [x,U] &\le& \liminf_{y \to x} \k^*[y,U];
\end{eqnarray}
\item {\sc Monotonicity property:} 
\begin{eqnarray}
\label{eq:monotonie++}
\{ U \ge U (x) \} \subset \{ V \ge V(x)\} &\Rightarrow&
\k^*[x,U] \le \k^*[x,V], \\
\nonumber
\{ U > U (x) \} \subset \{ V > V(x)\} &\Rightarrow&
\k_*[x,U] \le \k_*[x,V].
\end{eqnarray}
\end{enumerate}
We next make precise the notion of viscosity solutions for
\eqref{eq:dislo}, \eqref{eq:tc} that will be used in the present paper.
%----------------------------------------------------
\begin{defi}[Viscosity solutions for \eqref{eq:dislo}]\label{def:visc-geom}
Given a function $u : (0,T) \times \mr^N \to \R$, we say that
\begin{enumerate}
\item It is a \emph{sub-solution} of \eqref{eq:dislo} if it is upper
  semi-continuous and if for every test-function $\phi \in C^2$ such
  that $u-\phi$ admits a strict  maximum in $(0,T) \times B_{R +1}
  (x)$ at $(t,x)$, we have
\begin{equation}\label{subsol}
- \p_t \phi (t,x) -  \k^*[x,\phi(t,\cdot)]  |D \phi|(t,x)\   \le 0
\end{equation}
if $D \phi (t,x) \neq 0$ and $-\p_t \phi(t,x) \le 0$ if $D\phi(t,x)=0$;
\item It is a \emph{super-solution} of \eqref{eq:dislo} if it is lower
  semi-continuous and if for every test-function $\phi \in C^2$ such
  that  $u-\phi$ admits a strict
   minimum in $(0,T) \times B_{R +1} (x)$ at $(t,x)$, we have
\begin{equation}\label{sursol}
-\p_t \phi (t,x) -  \k_*[x,\phi(t,\cdot)]  |D \phi|(t,x)  \ge 0
\end{equation}
if $D \phi (t,x) \neq 0$ and $-\p_t \phi(t,x) \ge 0$ if $D\phi(t,x)=0$;
\item
It is a \emph{solution} of \eqref{eq:dislo} if it is both a sub and super-solution.
\end{enumerate}
\end{defi}
%----------------------------------------------------
It is proved in \cite{imbert} that a comparison principle holds
true for such super- and sub-solutions.

\subsubsection{The game for the integral curvature equation}

In the following game, the two players choose successively
hypersurfaces and points on these hypersurfaces or close to them.  In
the present paper, a hypersurface refers to the $0$-level set of a
smooth function $\phi$. We  recall the definition of the cut-off
function we considered in the repeated game for the eikonal case.
\begin{equation}\label{cep2}
C_\eps (r) = (r \vee \eps^{\frac32})\wedge \eps^{\frac12} =
\left \{ \begin{array}{ll} \eps^{\frac32} & \text{if } 0 < r < \eps^{\frac32} , \\
    r & \text{if } \eps^{\frac32} < r < \eps^{\frac12} , \\
    \eps^{\frac12} & \text{if } r > \ep^{\frac12} . \end{array}
\right.
\end{equation}
We also recall that $R$ is the size of the support of $\kappa$ as in
\eqref{cond:N}.
%-----------
\begin{game}[Integral curvature equation]
At time $t \in (0,T)$, Paul starts at $x$ with zero score. His objective is to get
the highest final score.
\begin{enumerate}
\item
  Paul chooses a point $x_P^+ \in B_\ep(x)$
   and a hypersurface $\G^+$ passing through $x_P^+$ defined by
  $$
  \G^+ = \{ z \in \R^N : \phi^+ (z ) = \phi^+ (x_P^+) \}$$
  with $\phi^+ \in C^2 (\R^N)$,
 oriented through the requirement   $\phi^+ (x)
   \le \phi^+(x_P^+)$.
  \begin{itemize}
  \item If $D \phi^+(x_P^+)\neq 0$ and $\k^*[x_P^+,\G^+]  >0$, Carol chooses the new position
    point $x_C^+$ in the half-space delimited   by $ \G^+$ \textit{i.e.} in
    $\{z \in B_R(x_P^+) :  \phi^+(z)  \ge \phi^+(x_P^+) \}$.
    Time gets reset to $t^+=t + C_\ep(\ep \k^*[x_P^+,\G^+]^{-1})$.
  \item If $D\phi^+(x_P^+)=0$ or $\k^* [x_P^+,\G^+] \le 0$, then the game stays
    at $x$: $x_C^+=x$. Time gets reset to
    $t^+=t + \ep^2$.
\end{itemize}
\item From the new position $x_C^+$ and time $t^+$ determined
  above, Carol chooses  a point $x_C^- \in B_\ep (x_C^+)$  and a
  hypersurface $\Gamma^-$ passing through $x_C^-$ defined by
  $$
  \G^- = \{ z \in \R^N : \phi^- (z ) = \phi^- (x_C^-) \}
  $$
  with $\phi^- \in C^2 (\R^N)$, and oriented through the requirement
 $\phi^-$ is such that $\phi^- (x_C^+) \ge \phi^-(x_C^-)$.
  \begin{itemize}
  \item If $D\phi^-(x_C^-)\neq 0$ and $\k_*[x_C^-,\G^-] < 0$, Paul chooses the
   new position point $x_P^- $ in the half-space delimited by $\G^-$
   \textit{i.e.} in $\{ z\in B_R(x_C^-) : \phi^-(z)\le \phi^-(x_C^-)\}$.
    Time gets reset to $t^-=t^+ + C_\ep (\ep | \k_*[x_C^-,\G]|^{-1})$.
  \item If $D\phi^-(x_C^-)= 0$ or $\k_* [x_C^-,\G^-] \ge 0$, then the game stays at  $x_C^+$ ($x_C^-=x_C^+$)
    and time gets reset to $t^- = t^+ + \ep^2$.
  \end{itemize}
\item
  Then previous steps are repeated as long as $t^- < T$. Paul's final score is $u_T (x_P^-)$.
\end{enumerate}
\end{game}
%--------
Remark in particular that in Step~1, the value of the function $\phi^+$ is successively
increased while in Step~2, the value of the function $\phi^-$ is successively decreased.
Precisely,
\begin{eqnarray*}
\phi^+ (x) \le \phi^+(x_P^+) \le \phi^+ (x_C^+) \, ,\\
\phi^- (x_C^+) \ge \phi^-(x_C^-) \ge \phi^- (x_P^-) \, .
\end{eqnarray*}

\subsubsection{Results and remarks}

In order to state the dynamic programming principle, we first
introduce admissible sets of points and half-spaces  for both
players. Precisely, we consider
\begin{equation}\label{defC}
  \mathcal{C}^{\pm}(x) = \{ (y,\varphi) \in B_\eps (x)  \times C^2 (\R^N) :
  \pm \varphi(y) \ge \pm \varphi(x)  \} ,
\end{equation}
\begin{equation}
  \label{defP+}
  \mathcal{P}^{+} (x,y,\varphi)  =
  \left\{ \begin{array}{ll}
      \{ z \in B_R(y) :  \varphi (z) \ge \varphi (y) \} & \text{ if }
      D\vp(y) \neq 0 \text{ and } \kappa^* [y,\varphi] > 0 \\
      \{x\} & \text{ if not} ,\end{array} \right.
\end{equation}
\begin{equation}
  \label{defP-}
  \mathcal{P}^{-} (x,y,\varphi)  =
  \left\{ \begin{array}{ll}
      \{ z \in B_R(y) :  \varphi (z) \le  \varphi (y) \} &  \text{ if }
      D\vp(y) \neq 0 \text{ and } \kappa_* [y,\varphi] <  0 \\
      \{ x \} & \text{ if not} .
\end{array} \right.
  \end{equation}
Hence, the dynamic programming principle associated to the game is
\begin{equation} \label{pdp:dislo}
  \ue(t,x)=  \sup_{ (x_P^+,\phi^+) \in \mathcal{C}^+ (x) } \left\{
    \inf_{ x_C^+ \in \mathcal{P}^+ (x,x_P^+,\phi^+) } \left\{
      \inf_{ (x_C^-, \phi^-) \in \mathcal{C}^- (x_C^+)} \left\{
        \sup_{x_P^- \in \mathcal{P}^- (x_C^+, x_C^-,\phi^-) } \left\{
          \ue\(t^-, x_P^- \) \right\} \right\} \right\} \right\}
\end{equation}
where
\begin{equation}\label{timereset:dislo}
  \left\{ \begin{array}{lr}
      t^+ = t &+ \left\{\begin{array}{lr}
          C_\eps(\ep \k^*[x_P^+,\G^+]^{-1})& \text{ if } D\phi^+ (x_P^+) \neq 0
          \text{ and } \k^*[x_P^+,\G^+] > 0,
          \\ \ep^2 & \text{ if not} ,
        \end{array} \right.
      \\
      t^- = t^+ &+  \left\{\begin{array}{lr}
          C_\ep(\ep |\k_*[x_C^-,\G^-]|^{-1}) & \text{ if } D\phi^- (x_C^-) \neq 0
          \text{ and } \k_*[x_C^-,\G^-] < 0  ,
          \\ \ep^2 & \text{ if not.} \end{array} \right.
\end{array} \right.
\end{equation}
%-----------
The last main result is
\begin{theo} \label{theo:conv-dislo}
Assume that $u_T \in W^{2,\infty} (\R^N)$. Then the
sequence $\ue$ converges locally uniformly as $\ep \to 0$ towards the unique
viscosity solution of \eqref{eq:geom}, \eqref{eq:tc}.
\end{theo}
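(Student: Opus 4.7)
I plan to follow the Barles--Souganidis framework adapted to the four-move Paul--Carol--Carol--Paul structure of the game. The four ingredients are: uniform boundedness of $\ue$, the correct terminal trace, consistency of the scheme with \eqref{eq:dislo}, and the comparison principle from \cite{imbert}.

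\emph{Step 1 (Stability and terminal trace).} A direct induction on the game steps, using the sup--inf structure of the dynamic programming principle \eqref{pdp:dislo} and the boundedness of $u_T$, gives $\|\ue\|_\infty \le \|u_T\|_\infty$; hence the relaxed semi-limits $\uB,\ub$ are well-defined and bounded on $(0,T]\times\R^N$. For the terminal trace, I use $u_T\in W^{2,\infty}(\R^N)$ to compare $\ue$ with barriers of the form $u_T(x)\pm C(T-t)$ for $C$ large; a bounded Lipschitz test shows that neither player can shift the value by more than $O(\ep)$ per step against such a barrier, yielding $\uB(T,\cdot)\le u_T\le \ub(T,\cdot)$.

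\emph{Step 2 (Consistency -- the main difficulty).} I show that $\uB$ is a viscosity sub-solution of \eqref{eq:dislo}; the super-solution property of $\ub$ is symmetric. Fix $\phi\in C^2_b(\R\times\R^N)$ such that $\uB-\phi$ attains a strict maximum at $(t,x)\in (0,T)\times\R^N$ inside $(0,T)\times B_{R+1}(x)$. Standard perturbation yields $(t_\ep,x_\ep)\to (t,x)$ maximizing $\ue-\phi$ on the same cylinder, so
$$
\ue(t^-,x_P^-)-\ue(t_\ep,x_\ep) \le \phi(t^-,x_P^-)-\phi(t_\ep,x_\ep)
$$
for every reachable $(t^-,x_P^-)$ with $x_P^-\in B_{R+1}(x_\ep)$. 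I have Paul open with $(x_P^+,\phi^+)=(x_\ep,\phi(t_\ep,\cdot))$ in \eqref{pdp:dislo}; this is admissible since $\phi^+(x_\ep)\le\phi^+(x_P^+)$ is trivial. In the generic case $D\phi(t,x)\neq 0$ and $\k^*[x,\phi(t,\cdot)]>0$, the upper semi-continuity \eqref{eq:usc-courbure} transfers positivity to $(t_\ep,x_\ep)$ for small $\ep$, giving time increment $C_\ep(\ep\k^*[x_\ep,\phi(t_\ep,\cdot)]^{-1})\sim\ep/\k^*$. Carol's choice of $x_C^+$ in the positive half-space, then Carol's $(x_C^-,\phi^-)$, and Paul's $x_P^-$ produce displacements inside $B_R(x_\ep)\cup B_\ep$ that I Taylor-expand in $\phi$. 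The key technical input is an analogue of the Crucial Lemma~\ref{lf1} for the curvature game: for any smooth $\phi$, Carol's worst response followed by Paul's best response is matched up to $o(\ep)$ by a reference strategy in which all four moves use $\phi(t_\ep,\cdot)$ as the hypersurface; the net effect on $\phi$ is then $C_\ep(\ep/\k^*)\partial_t\phi + o(\ep)$. Dividing by $\ep/\k^*$ and passing to the limit gives \eqref{subsol}. The degenerate cases ($D\phi(t,x)=0$ or $\k^*[x,\phi(t,\cdot)]\le 0$) force the game to stall over a step of size $\ep^2$, from which $-\partial_t\phi(t,x)\le 0$ is read off directly. The super-solution argument for $\ub$ uses $\k_*$ and \eqref{eq:lsc-courbure} in place of $\k^*$ and \eqref{eq:usc-courbure}.

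\emph{Step 3 (Conclusion).} The comparison principle of \cite{imbert} for \eqref{eq:dislo}, applied to the sub-solution $\uB$ and super-solution $\ub$ with $\uB(T,\cdot)\le u_T\le\ub(T,\cdot)$, yields $\uB\le\ub$ on $(0,T)\times\R^N$. Since $\ub\le\uB$ by construction, both equal the unique viscosity solution $u$, so $\ue\to u$ locally uniformly.

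The main obstacle is the curvature analogue of Lemma~\ref{lf1}. Unlike the PIDE case, only one player chooses the hypersurface at each half-step, and the non-local quantity $\k^*$ must be controlled on arbitrary smooth surfaces picked adversarially. The monotonicity property \eqref{eq:monotonie++}, together with the fact that $D\phi(t,x)\neq 0$ ensures that the level sets of $\phi(t_\ep,\cdot)$ are regular hypersurfaces near $x_\ep$, is what should enable the matching; but handling the singular integral defining $\k^*$ near $x_\ep$ simultaneously with the four alternating extrema in \eqref{pdp:dislo} will require careful bookkeeping.
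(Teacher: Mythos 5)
Your high-level architecture (stability, terminal trace, consistency, then the comparison principle of \cite{imbert}) is the same as the paper's, but the step you yourself flag as ``the main difficulty'' is not only unproved --- the sketch you give for it points in the wrong direction. For the sub-solution property of $\uB$ the maximum-point argument gives $\phi(t_\ep,x_\ep)\le \Sc^\ep[\phi](t_\ep,x_\ep)$, and since the outermost operation in $\Sc^\ep=\RE\circ\Re$ is Paul's supremum over $(x_P^+,\phi^+)$, what you need is an \emph{upper} bound valid for \emph{every} admissible (or at least every near-optimal) hypersurface Paul might choose. Prescribing Paul's opening move $(x_\ep,\phi(t_\ep,\cdot))$ bounds that supremum from \emph{below}, so it cannot yield the sub-solution inequality; likewise, a ``reference strategy in which all four moves use $\phi(t_\ep,\cdot)$'' only gives one competitor in the sup. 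The paper's actual mechanism (Lemmas \ref{lem:cdr-2}, \ref{lclaim}, \ref{lem:comparaison-courbure}) is different: if Paul's choice $(y,\vp)$ is near-optimal, then the very inequality expressing near-optimality, tested on points $z$ where $\phi(t_0,x_0+z)<\phi(t_0,x_0)$, forces the set inclusion $\{\vp(y+\cdot)\ge\vp(y)\}\subset\{\phi(t_0,x_0+\cdot)\ge\phi(t_0,x_0)\}$, and the monotonicity \eqref{eq:monotonie++} then gives $0<\k^*[y,\vp]\le \k^*[x_0,\phi(t_0,\cdot)]+o(1)$. This single estimate both dominates the curvature term and pins down the time increment $T_P(y,\vp)$, which varies with the adversarial surface (between $\ep^2$ and $\ep^{1/2}$) --- a dependence your proposal never confronts. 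On the companion lower bound (Lemma \ref{lem:cdr-1}) you would also need the localizing perturbation $\vp=\phi(t,\cdot)-\alpha_\ep$: without it Carol's infimum over $z\in B_R(y)$ can escape far from $x$, since the players move by distances up to $R$, not $\ep$, in each half-step.

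That same $O(R)$ displacement per step also undermines your terminal-trace argument: ``neither player can shift the value by more than $O(\ep)$ per step'' is false for a generic Lipschitz barrier, because $x_C^+$ and $x_P^-$ range over balls of radius $R$ while the time increment can be as small as $\ep^2$. The paper's Lemma \ref{lem:tc-reduced} instead runs an induction in steps of $\ep^2$ and uses the specific admissible pair $(x,u_T)$ together with the defining property of $\mathcal{P}^+$ (points $z$ there satisfy $u_T(z)\ge u_T(x)$), i.e.\ the non-local structure of the game rather than a Lipschitz bound in space. So the proposal is a correct outline of the Barles--Souganidis reduction, but the two quantitative inputs it relies on are either missing or argued in a way that would fail.
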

%---------
\begin{remark}
To avoid further technicalties, we assume that the terminal datum is
very regular; it can be shown that the result still holds true for much
less regular functions $u_T$ such as bounded uniformly continuous ones.
This extension is left to the reader.
\end{remark}

\subsection{Open problems}
As we mentioned above, the games we have constructed have much more
complicated rules than the Paul-Carol game for mean curvature
flow. It is then a natural open problem to find simpler games and in
particular a game for the integral curvature flow associated with the
singular measure $\nu(dz) = (1-\alpha) dz/|z|^{N+\alpha}$ which
converges (in some sense) as $\alpha \to 1$ to the original Paul and
Carol game. The reason to look for such as game is that it is known
\cite{imbert} that the integral curvature flow converges towards the
mean curvature flow as $\alpha \to 1$.  The same question can be
raised for the fractional Laplacian operators in the situation of
PIDE's: find a game a natural game associated to $\Delta^\alpha$
operators, which coincides with a natural game for $\alpha =1$.

\section{Proofs of convergence results}

\subsection{Proof of Theorem~\ref{theo:conv-pide}}
\label{subsec:conv-pide}

As explained in Remark~\ref{rem:stronger-result}, it is enough to
prove the convergence as $\eps \to 0$ by assuming that $\nu$ is
supported in $B_R$ for some fixed $R>0$.

For fixed $\eps >0$ and $x \in \R^N$, the value function $u^\eps(t,x)$
is finite for $t$ close to $T$ thanks to the following
lemma. Proposition~\ref{pro:icpide} below is needed to prove that
$u^\eps(t,x)$ is finite for all $t \in (0,T)$.
%----------
\begin{lem}[The functions $u^\eps$ are well defined]\label{lem:tech2}
For all $\Phi \in C^2(\mr^N)$ such that
\begin{equation}\label{restra}
\|\Phi\|_\infty \le \eps^{-\alpha},\  |D \Phi (x)|\le \ep^{-\alpha} ,\   |D^2 \Phi(x)|\le \eps^{-\alpha} \, ,
\end{equation}
we have
$$
-\eps F (t, x, D\Phi (x),D^2 \Phi (x), I_R [x,\Phi]) \le C \eps^\gamma
$$
with $\gamma = 1 -\alpha  \max (1,k_1,k_2)  \in (0,1)$ and $C$
depends on $F$, $\nu$ and $R$.
\end{lem}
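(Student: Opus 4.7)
The plan is to bound $|F(t,x,D\Phi(x),D^2\Phi(x),I_R[x,\Phi])|$ directly from (A1), (A3) and the constraints \eqref{restra}, the only nontrivial step being an estimate $|I_R[x,\Phi]| \le C\eps^{-\alpha}$ so that the $I_R$-contribution is absorbed into the leading term.

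First, I would use (A1) applied at $l=0$ together with the Lipschitz dependence of $F$ on $l$ granted by (A3) to write
$$|F(t,x,p,X,l)| \le C(F)\bigl(1 + |p|^{k_1} + |X|^{k_2}\bigr) + C(F)|l|.$$
Plugging $p = D\Phi(x)$, $X = D^2\Phi(x)$ and invoking \eqref{restra} bounds the $p$- and $X$-terms by $C\eps^{-\alpha k_1}$ and $C\eps^{-\alpha k_2}$ respectively, hence by $C\eps^{-\alpha\max(1,k_1,k_2)}$.

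Next I would estimate $|I_R[x,\Phi]|$ by splitting the domain of integration at $|z|=1$. On the outer annulus $1 \le |z| \le R$ the crude bound $|\Phi(x+z)-\Phi(x)| \le 2\|\Phi\|_\infty \le 2\eps^{-\alpha}$ combined with the finiteness $\int_{\R^N\setminus B}\nu(dz) < +\infty$ from \eqref{cond:nuPIDE} gives a contribution at most $C(\nu,R)\eps^{-\alpha}$. On the inner region $|z|<1$, I would write the second-order Taylor remainder
$$|\Phi(x+z) - \Phi(x) - D\Phi(x)\cdot z| \le \tfrac12 \|D^2\Phi\|_\infty |z|^2,$$
and use the pointwise constraint $|D^2\Phi(x)| \le \eps^{-\alpha}$ from \eqref{restra} (which, combined with the $C^2_b$-regularity of $\Phi$ built into the game, provides the same scaling on a neighborhood of $x$) together with $\int_B |z|^2 \nu(dz) < +\infty$ from \eqref{cond:nuPIDE} to produce a contribution at most $C(\nu)\eps^{-\alpha}$. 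Altogether $|I_R[x,\Phi]| \le C(\nu,R)\eps^{-\alpha}$, and since $\alpha \le \alpha\max(1,k_1,k_2)$ this quantity is dominated by $C\eps^{-\alpha\max(1,k_1,k_2)}$.

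Combining the two estimates and multiplying by $\eps$ gives
$$-\eps F(t,x,D\Phi(x),D^2\Phi(x),I_R[x,\Phi]) \le \eps|F(\ldots)| \le C\eps^{1-\alpha\max(1,k_1,k_2)} = C\eps^\gamma,$$
which is the claim; the restriction $\alpha \in (0,(\max(1,k_1,k_2))^{-1})$ ensures $\gamma \in (0,1)$. The main obstacle is the control of the singular part of $I_R$ near $0$: it is there that the $|z|^2$-integrability in \eqref{cond:nuPIDE} is essential and must be paired with the $D^2\Phi$ bound from \eqref{restra}. Everything else is a routine application of the growth and Lipschitz hypotheses (A1) and (A3).
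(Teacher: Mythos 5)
Your proof is correct and follows essentially the same route as the paper: the paper simply asserts the bound $|I_R[x,\Phi]|\le C\eps^{-\alpha}$ as clear from \eqref{defI} and then applies (A1) and (A3) exactly as you do, so your splitting of the integral at $|z|=1$ merely fills in that assertion. The only caveat --- which you already flag, and which is equally implicit in the paper --- is that the near-origin Taylor estimate needs $|D^2\Phi|\lesssim\eps^{-\alpha}$ on a neighborhood of $x$, not just at $x$, so the constraint \eqref{restra} has to be read as a bound on the sup-norms of the derivatives.
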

%--------
\begin{proof}[Proof of Lemma~\ref{lem:tech2}]
We consider a bounded $C^2$ function $\Phi$ such that \eqref{restra} holds.
From the definition of $I_R[x,\cdot]$ (see \eqref{defI}), it is clear that there
exists a constant $C$ only depending on $R$ and $\nu$ such that
$$
|I_R[x,\Phi]| \le C \eps^{-\alpha} \, .
$$
We thus get from (A1) and (A3)
\begin{eqnarray*}
-\eps F (t, x, D\Phi (x),D^2 \Phi (x), I_R [x,\Phi]) &\le & C \eps (
1+ \eps^{-\alpha k_1} +  \eps^{-\alpha k_2} + \eps^{-\alpha})
\end{eqnarray*}
and the lemma follows at once.
\end{proof}
%---------
Let us define as usual the semi-relaxed limits  $\ub= \liminf^*_{\ep\to 0} \ue$ and $\uB=
\limsup^*_{\ep \to 0} \ue$.  Theorem \ref{theo:conv-pide} will follow  from the
 following two  propositions.
%---------------------------------------------
\begin{pro}\label{pro:pide}
The functions $\ub$ and $\uB$ are finite and are respectively a super-solution and a sub-solution
of \eqref{eq:pide}.
\end{pro}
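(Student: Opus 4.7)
\emph{Finiteness.} My plan is the following. Helen's trivial strategy $\Phi \equiv 0$ at every round makes each increment equal to $-\eps F(t,x,0,0,0)$, bounded in modulus by $C\eps$ by {\rm (A1)}. Summing over the at most $(T-t)/\eps$ rounds and adding the bounded terminal bonus yields $\ue(t,x) \ge -\|u_T\|_\infty - C(T-t)$. For the matching upper bound, I would argue by backward induction in time starting from $T$, combining Lemma~\ref{lem:tech2} (which tames the $F$-term for any allowed $\Phi$) with the observation that Helen's near-optimal $\Phi$ inherits an $L^\infty$-bound from $\ue(t+\eps,\cdot)$, so that $\Phi(x)-\Phi(y)$ cannot propagate divergently. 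This yields a uniform bound $\|\ue\|_\infty \le C$ independent of $\eps$, so that $\ub, \uB$ are finite. I then verify the sub- and super-solution properties separately via the dynamic programming principle \eqref{pdp:pide}, in the standard viscosity-solutions style.

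\emph{Super-solution property of $\ub$.} Let $\phi \in C^2$ be a bounded test-function with $\ub - \phi$ attaining a global minimum equal to $0$ at $(t_0, x_0)$. Standard relaxed-limit arguments produce $(\eps_n, t_n, x_n) \to (0, t_0, x_0)$ and $\xi_n \to 0$ such that $u^{\eps_n}(t_n, x_n) = \phi(t_n, x_n) + \xi_n$ and $u^{\eps_n}-\phi \ge \xi_n$ on a neighborhood containing $\{t_n+\eps_n\} \times B_R(x_n)$, justified for $n$ large by Remark~\ref{rem:supp-nu-compact}. In \eqref{pdp:pide} I test Helen's supremum against the explicit choice $\Phi(y) = \phi(t_n+\eps_n, y)\,\chi(y)$, where $\chi$ is a smooth cutoff equal to $1$ on $B_R(x_n)$ and $0$ outside $B_{R+1}(x_n)$. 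Since $I_R[x_n, \Phi]$ depends only on $\Phi|_{B_R(x_n)}$ and $D\Phi(x_n), D^2\Phi(x_n)$ are local quantities, the $F$-argument for this $\Phi$ matches that of $\phi(t_n+\eps_n,\cdot)$, and $\|\Phi\|_\infty \le \|\phi\|_\infty$ uniformly in $n$. For $y \in B_R(x_n)$, the expression $u^{\eps_n}(t_n+\eps_n, y) + \Phi(x_n) - \Phi(y) \ge \xi_n + \phi(t_n+\eps_n, x_n)$ is independent of $y$, so the infimum equals $\xi_n + \phi(t_n+\eps_n, x_n) - \eps_n F(\cdot)$. Subtracting $\phi(t_n, x_n) + \xi_n$, dividing by $\eps_n$ and letting $n \to \infty$ delivers $-\p_t \phi(t_0, x_0) + F(t_0, x_0, D\phi, D^2\phi, I_R[x_0, \phi]) \ge 0$.

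\emph{Sub-solution property of $\uB$.} Let $\phi \in C^2$ be bounded with $\uB - \phi$ attaining a global maximum $0$ at $(t_0, x_0)$, taken as a strict maximum in $(0,T) \times B_{R+1}(x_0)$ by Remark~\ref{rem:supp-nu-compact}. Extract $(\eps_n, t_n, x_n) \to (0, t_0, x_0)$ with $u^{\eps_n}(t_n, x_n) \to \uB(t_0, x_0)$ and $u^{\eps_n}(t_n+\eps_n, y) \le \phi(t_n+\eps_n, y) + \xi_n$ for all $y \in B_R(x_n)$, with $\xi_n \to 0$. Select an $\eps_n^2$-near-optimizer $\Phi_n$ of Helen's outer supremum in \eqref{pdp:pide}. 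Apply the crucial Lemma~\ref{lf1} with $\psi$ equal to a cutoff of $\phi(t_n+\eps_n,\cdot)$ and $\Phi = \Phi_n$ to obtain a point $y_n \in B_R(x_n)$ satisfying \eqref{lf2}. Inserting this $y_n$ into the near-optimal dynamic programming inequality and combining with the local-maximum bound on $u^{\eps_n}(t_n+\eps_n, y_n)$, the $\psi$- and $\Phi_n$-terms cancel appropriately, leaving
\begin{equation*}
\phi(t_n, x_n) \le \phi(t_n+\eps_n, x_n) - \eps_n F\bigl(t_n, x_n, D\phi, D^2\phi, I_R[x_n, \phi(t_n+\eps_n,\cdot)]\bigr) + o(\eps_n).
\end{equation*}
Dividing by $\eps_n$ and letting $n \to \infty$ yields $-\p_t \phi + F \le 0$ at $(t_0, x_0)$.

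\emph{Main obstacle.} The delicate point is the $\Phi$-dependence of the $o(\eps)$ in Lemma~\ref{lf1}: the near-optimizer $\Phi_n$ is only constrained by $\|\Phi_n\|_\infty, |D\Phi_n(x_n)|, |D^2\Phi_n(x_n)| \le \eps_n^{-\alpha}$, which in principle could let the remainder blow up. Ensuring the remainder is genuinely $o(\eps_n)$ requires either a careful quantitative re-examination of the proof of Lemma~\ref{lf1}, or an a priori reduction of the supremum in \eqref{pdp:pide} to $\Phi$ of uniformly bounded norm, exploiting the $L^\infty$-bound on $\ue$ from the finiteness step. This is the technical heart of the proof.
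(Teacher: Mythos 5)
Your treatment of the sub- and super-solution properties is essentially the paper's own argument, just unpacked: the paper routes everything through the scheme operator $\Sc^\eps$ (monotone, commuting with constants) and the consistency Lemma~\ref{lem:consis-pide}, whose two halves are exactly your two moves --- the explicit competitor $\Phi=\phi(t_n+\eps_n,\cdot)$ for one inequality, and Lemma~\ref{lf1} applied to a near-optimal $\Phi_n$ for the other. The obstacle you flag at the end (the $o(\eps)$ of Lemma~\ref{lf1} depends on $\Phi$, while the admissible $\Phi$'s only carry the pointwise bounds $\eps^{-\alpha}$ at $x$) is real, but it is equally present in the paper: Lemma~\ref{lem:consis-pide} asserts a remainder depending only on $\psi$ and $K$ while quoting Lemma~\ref{lf1}, whose remainder depends on $\Phi$. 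So you have correctly located the technical heart; neither your write-up nor the paper's discharges it at the level of the written statements.

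The one place where your proposal genuinely falls short is finiteness of $\ub$ and $\uB$, which requires bounds on $\ue$ \emph{uniform in} $\eps$. Your lower bound (Helen plays $\Phi\equiv 0$) is fine, but the upper bound does not work as sketched: for a general admissible $\Phi$ the only control on the running cost is Lemma~\ref{lem:tech2}, i.e.\ $-\eps F\le C\eps^\gamma$ with $\gamma<1$, so iterating over $(T-t)/\eps$ rounds yields $C(T-t)\eps^{\gamma-1}$, which blows up as $\eps\to 0$; and an $L^\infty$-bound "inherited" by Helen's near-optimal $\Phi$ controls neither $D\Phi(x)$, $D^2\Phi(x)$ nor $I_R[x,\Phi]$, which is what enters $F$. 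The paper instead gets the uniform bound from Proposition~\ref{pro:icpide}: one compares $\ue(T-k\eps,\cdot)$ with $u_T$ by induction on $k$, using the consistency lemma with $\psi=u_T^\eta$, a smooth approximation of $u_T$, so that the per-step error is a genuine $O(\eps)$ with constants depending only on $\|u_T\|_{W^{2,\infty}}$. Your "backward induction" is morally this argument, but the essential point you are missing is that the $F$-term must be evaluated along (an approximation of) $u_T$ rather than along an arbitrary admissible $\Phi$; you should either import that proposition or reproduce its proof.
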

%---------------------------------------------
\begin{pro}\label{pro:icpide}
  Given $R>0$, there exists a constant $C>0$ such that for all $\eps
  >0$, all $(t,x) \in (0,T) \times B_R $, we have
$$
|u^\eps (T-t,x) -u_T (x) | \le C t \, .
$$
In particular, $\ub$  and $\uB$ are finite and they satisfy at
time $t=T$ and for all $x \in \R^N$
$$
\ub (T,x) = \uB (T,x)= u_T(x) \, .
$$
\end{pro}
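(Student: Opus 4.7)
The plan is to establish the two-sided estimate $|u^\eps(T-t,x) - u_T(x)| \le Ct$ directly from the game by exhibiting good strategies for both players; the conclusions about $\ub$ and $\uB$ at time $T$ follow as immediate consequences.

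For the lower bound, I would have Helen commit to the simple strategy $\Phi_n = u_T$ at every step. This is admissible for $\eps$ small enough since $u_T \in W^{2,\infty}(\R^N)$ implies $\max(\|u_T\|_\infty, \|Du_T\|_\infty, \|D^2 u_T\|_\infty) \le \eps^{-\alpha}$ eventually. Because $\Phi$ is fixed, the $\Phi$-differences in the score updating telescope with the terminal bonus $u_T(x_N)$, so regardless of Mark's moves the total payoff equals
\[
u_T(x) - \eps \sum_{n=0}^{N-1} F\bigl(t_n, x_n, Du_T(x_n), D^2 u_T(x_n), I_R[x_n, u_T]\bigr).
\]
The integral term $|I_R[x, u_T]|$ is bounded uniformly in $x$ using $u_T \in W^{2,\infty}$ and \eqref{cond:nuPIDE}, so by (A0) each summand is bounded by a constant $M$ depending only on $u_T, F, \nu, R$. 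Since $N \le t/\eps + 1$, the payoff is at least $u_T(x) - Ct$ for a suitable $C$, establishing $u^\eps(T-t, x) \ge u_T(x) - Ct$.

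For the upper bound, I would proceed by induction on $k \in \mathbb{N}$, claiming $u^\eps(T - k\eps, y) \le u_T(y) + Ck\eps$ uniformly in $y \in \R^N$. The base case is $u^\eps(T, \cdot) = u_T$ by definition. For the inductive step, the dynamic programming principle \eqref{pdp:pide} combined with the inductive hypothesis yields
\[
u^\eps(T-(k+1)\eps, x) \le Ck\eps + \sup_\Phi \inf_{y \in B_R(x)}\bigl\{u_T(y) + \Phi(x) - \Phi(y) - \eps F(\dots, \Phi)\bigr\}.
\]
Applying Lemma~\ref{lf1} with $\psi = u_T$: for every admissible $\Phi$, Mark can choose $y_\eps \in B_R(x)$ so that the bracketed expression is at most $u_T(x) - \eps F(\dots, u_T) + o(\eps)$. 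Since $|F(\dots, u_T)|$ is uniformly bounded (as in the lower bound), this yields $u_T(x) + C'\eps$ with $C'$ independent of $\Phi$, closing the induction.

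The remaining claims follow immediately: the two-sided bound implies $|u^\eps| \le \|u_T\|_\infty + CT$ on $(0, T) \times \R^N$, so $\ub$ and $\uB$ are finite. For the terminal values, using $u^\eps(t, \cdot) = u_T$ for $t \ge T$ together with continuity of $u_T$, any sequence $(t_n, y_n, \eps_n) \to (T, x, 0)$ gives $u^{\eps_n}(t_n, y_n) \to u_T(x)$, hence $\ub(T, x) = \uB(T, x) = u_T(x)$. The main obstacle lies in the upper bound: one must verify that the $o(\eps)$ term in Lemma~\ref{lf1} is genuinely bounded by $C'\eps$ uniformly over the admissibility class for $\Phi$, since Helen's function may have derivatives and $L^\infty$ norm as large as $\eps^{-\alpha}$. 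Tracking this dependence carefully is the delicate part, and is exactly where the rigorous proof of Lemma~\ref{lf1} must provide uniform quantitative control.
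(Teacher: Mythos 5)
Your argument follows essentially the same route as the paper's: a lower bound obtained by playing a fixed strategy built from $u_T$ so that the score increments telescope, an upper bound by induction on the number of steps via the dynamic programming principle and Lemma~\ref{lf1}, and the claims about $\ub$ and $\uB$ at $t=T$ as immediate consequences. The one step that does not work exactly as written is taking $\Phi=u_T$ (and $\psi=u_T$ in Lemma~\ref{lf1}): the hypothesis $u_T\in W^{2,\infty}(\R^N)$ does not make $u_T$ a $C^2$ function, so it is not an admissible choice for Helen in the game, and Lemma~\ref{lf1} is stated only for $C^2$ functions. The paper fixes this by introducing $C^2$ approximations $u_T^\eta$ with $\|u_T^\eta-u_T\|_{W^{2,\infty}}\le\eta$, running both bounds with $u_T^\eta$ in place of $u_T$ at the price of an additive $O(\eta)$, and letting $\eta\to 0$ at the end; with this modification your proof goes through. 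Your closing observation that the $o(\eps)$ of Lemma~\ref{lf1} must be uniform over the admissible class of $\Phi$ (whose norms may grow like $\eps^{-\alpha}$) is well taken: this is precisely the uniformity asserted by the consistency lemma and used in the paper's own upper-bound step, so you have correctly located where the quantitative control must come from.
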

%----------------------------------------------
These two propositions together with the comparison principle imply
that $\ub = \uB$, \textit{i.e.} $u^\ep$ converges locally uniformly
towards a continuous function denoted $u$. This implies that $u$ is
a (continuous) viscosity solution of \eqref{eq:pide} satisfying
\eqref{eq:tc}. This finishes the proof of the theorem.

%----------
We now prove the two propositions.
%--------------------------------------------
\begin{proof}[Proof of Proposition~\ref{pro:pide}]
We use the general method proposed by Barles and Souganidis \cite{bs91} in order
to prove that $\ub$ is a super-solution of \eqref{eq:pide}.
This is the reason why, given a function $U :  \R^N \to \R$,
we introduce
\begin{multline*}
\Sc^\eps [U](t,x) = \sup_{\stackrel{\Phi \in
C^2(\mr^N)}{\|\Phi\|_{\infty},  |D \Phi (x)|, |D^2 \Phi (x)|\le \eps^{-\alpha} } }
\inf_{y \in B_R (x)} \bigg\{ U (y) + \Phi(x)- \Phi(y) \\
- \ep F( t,x,D\Phi(x), D^2 \Phi(x), I_R[x,\Phi]) \bigg\}  \, .
\end{multline*}
The two important properties of $\Sc^\eps$ are:
\begin{equation}
\label{mono1}\text{
it commutes with constants: $\Sc^\eps [U+ C] = \RE [U]+C$ for any constant $C \in \R$};\end{equation}
\begin{equation}\label{mono2}
\text{ it is monotone: if $U \le V$ then $\Sc^\eps [U] \le \Sc^\eps [V]$.}
\end{equation}
The dynamic programming principle \eqref{pdp:pide} is rewritten
as follows
\begin{equation}\label{pdp:pide-2}
\ue (t,x) = \Sc^\eps [\ue(t+\ep, \cdot) ](t,x)  .
\end{equation}

We now explain how to prove that $\ub$ is a super-solution of
\eqref{eq:pide}.  The case of $\uB$ is proven analogously thanks to a
``consistency lemma'' (see below Lemma~\ref{lem:consis-pide}).
Following Definition~\ref{def:visc-pide} and Remark~\ref{rem:supp-nu-compact},
consider  a $C^2$ bounded test function $\phi$ and a point
$(t_0,x_0)$ with $t_0 >0$ such that $\ub-\phi$ admits a strict minimum
$0$ at $(t_0,x_0)$ on $\mathcal{V}_0 = (0,T) \times B_{R+1}(x_0)$.  By
definition of $\ub$, there exists $(\tau_\ep, y_\ep) $
such that $(\tau_\ep, y_\ep) \to (t_0,x_0)$ and $u^{\ep} (\tau_\ep,
y_\ep)\to \ub(t_0,x_0)$ as $\ep \to 0$, up to a subsequence.  Let then
$(t_\ep, x_\ep)$ be a point of minimum of $\ue - \phi$ on
$\mathcal{V}_0$.  We have $$\ue(t_\ep, x_\ep)- \phi(t_\ep, x_\ep)\le
u^{\ep} (\tau_\ep, y_\ep) - \phi(\tau_\ep, y_\ep) \to \ub(t_0,x_0)-
\phi(t_0,x_0)=0$$ hence by definition of $\ub$ and $(t_0,x_0)$ as a
strict local minimum, we conclude that we must have $(t_\ep, x_\ep)
\to (t_0,x_0)$ as $\ep \to 0$.
 %and $\ue(t_\ep, x_\ep) \to \ub (t_0,x_0)$.
 In addition, for all $(t,x) \in \mathcal{V}_0$, we have
$$
\ue (t,x) \ge \phi (t,x) + (\ue (t_\eps,x_\eps) - \phi (t_\eps,x_\eps)) =: \phi(t,x) + \xi_\ep .
$$
In particular, if $\eps$ is small enough, this inequality holds true
on $(0,T) \times B_R(x_\eps)$.  From the definition of $\Sc^\eps$,
the dynamic programming principle \eqref{pdp:pide-2}, and the properties
\eqref{mono1}--\eqref{mono2}, the previous inequality implies
$$
u^\eps (t_\eps,x_\eps) = \Sc^\eps [\ue (t_\ep+\ep, \cdot)
](t_\eps,x_\eps) \ge \Sc^\eps [\phi(t_\ep + \ep, \cdot ) + \xi_\ep](t_\eps,x_\eps) =
\Sc^\eps[\phi(t_\ep + \ep, \cdot)](t_\ep, x_\ep) + \xi_\ep  .
$$
Since $u^\ep(t_\ep, x_\ep)= \phi(t_\ep, x_\ep)+ \xi_\ep$, we get
$$
 \phi(t_\ep, x_\ep) \ge \Sc^\eps [\phi(t_\ep + \ep, \cdot)](t_\eps , x_\eps) .
$$
We claim Proposition~\ref{pro:pide} is proved if the following lemma
holds true.
%---------
\begin{lem}[Consistency lemma for PIDE]\label{lem:consis-pide}
  Consider a $C^2$ bounded test function $\psi$. Given a compact
  subset $K$ of $(0,T) \times \R^N$, there then exists a function
  $o(\eps)$ such that $o (\eps) / \eps \to 0$ as $\eps \to 0$ and for
  all $(t,x) \in K$, we have
$$
\Sc^\eps [ \psi ](t,x) = \psi (x) - \ep F (t,x,D\psi (x), D^2 \psi
(x), I_R [x,\psi])+ o(\eps) .
$$
\end{lem}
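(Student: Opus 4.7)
The plan is to establish the two one-sided inequalities $\Sc^\eps[\psi](t,x) \ge \psi(x) - \eps F(t,x,D\psi(x),D^2\psi(x), I_R[x,\psi])$ and $\Sc^\eps[\psi](t,x) \le \psi(x) - \eps F(t,x,D\psi(x),D^2\psi(x), I_R[x,\psi]) + o(\eps)$ separately, both uniform in $(t,x) \in K$. The lower bound is obtained by using $\Phi = \psi$ itself as a competitor in the supremum defining $\Sc^\eps[\psi]$: since $\psi \in C^2_b(\R^N)$ has fixed norms, the admissibility constraints $\|\psi\|_\infty$, $|D\psi(x)|$, $|D^2\psi(x)| \le \eps^{-\alpha}$ are all satisfied once $\eps$ is small enough (depending only on $\psi$ and $\alpha$), and the inner infimum over $y \in B_R(x)$ collapses because $\psi(y) + \psi(x) - \psi(y) = \psi(x)$ is independent of $y$.

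The upper bound is where the Crucial Lemma~\ref{lf1} enters: applied with $\psi$ as above and any admissible $\Phi$, it produces $y \in B_R(x)$ verifying (\ref{lf2}), so the inner infimum is controlled by $\psi(x) - \eps F(t,x,D\psi(x),D^2\psi(x),I_R[x,\psi]) + o(\eps)$. The only subtlety is to pass the supremum over admissible $\Phi$ through the $o(\eps)$, which requires that the error be uniform in $\Phi$ (as well as in $(t,x) \in K$). I would establish this uniformity by contradiction: negate the uniform upper bound to obtain $\eta > 0$, $\eps_n \to 0$, $(t_n,x_n) \to (t_*,x_*) \in K$, and admissible $\Phi_n$ such that for every $y \in B_R(x_n)$,
\[ \psi(y) - \psi(x_n) > \Phi_n(y) - \Phi_n(x_n) + \eps_n [F_n(\Phi_n) - F_n(\psi)] + \eta \eps_n, \]
where $F_n(\phi) := F(t_n, x_n, D\phi(x_n), D^2\phi(x_n), I_R[x_n, \phi])$. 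Setting $y = x_n$ already forces $F_n(\Phi_n) - F_n(\psi) < -\eta$.

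The displayed inequality says $\psi - \Phi_n$ has a strict minimum at $x_n$ over $B_R(x_n)$ with quantitative gap of order $\eps_n$. Probing at $y = x_n + h$ with $|h| = \eps_n^\sigma$ for a fixed $\sigma \in (\alpha, 1)$ and using the Taylor expansion of $\psi$ (whose derivatives are bounded on $K$) against the $\eps_n^{-\alpha}$ bound on $D^2\Phi_n(x_n)$ forces $D\Phi_n(x_n) = D\psi(x_n) + o(1)$ and $D^2\Phi_n(x_n) \le D^2\psi(x_n) + o(1)$. Combined with the global minimum property on $B_R(x_n)$, the matched gradients yield $I_R[x_n, \Phi_n] \le I_R[x_n, \psi] + o(1)$ directly from the formula (\ref{defI}) for $I_R$. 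By ellipticity (\ref{cond:ellip}) and continuity (A0) of $F$, passing to the limit $n \to \infty$ gives $F_n(\Phi_n) \ge F_n(\psi) - o(1)$, contradicting $F_n(\Phi_n) - F_n(\psi) < -\eta$ for large $n$. This contradiction proves the required uniformity and hence the consistency lemma.

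The main obstacle is the derivative-matching step: since admissibility only bounds $D^2\Phi_n(x_n)$ by $\eps_n^{-\alpha}$, the probe scale $\eps_n^\sigma$ must simultaneously satisfy $\eps_n^{2\sigma - \alpha} = o(\eps_n)$ (so the quadratic remainder in $\Phi_n$'s Taylor expansion is overcome by the $\eta \eps_n$ gap) and $\eps_n^\sigma \to 0$ (so the first-order expansion of $\psi$ at $x_n$ is accurate). These requirements are compatible precisely because $\alpha < 1$, and any $\sigma \in (\alpha, 1)$ works.
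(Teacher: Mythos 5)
Your overall strategy coincides with the paper's: the lower bound via $\Phi=\psi$ is exactly the paper's argument, and your contradiction/probing argument for the upper bound is, in substance, a re-run of the paper's proof of the crucial Lemma~\ref{lf1}, which the paper's two-line proof of Lemma~\ref{lem:consis-pide} simply invokes. You are right to flag that passing the supremum over admissible $\Phi$ through the $o(\eps)$ requires uniformity in $\Phi$ (the $o(\eps)$ of Lemma~\ref{lf1} depends on $\Phi$, and the admissible class grows as $\eps\to 0$); the paper glosses over this. However, the key nonlocal step of your uniformization does not close as written: the claim that the matched gradients and the global minimum property yield $I_R[x_n,\Phi_n]\le I_R[x_n,\psi]+o(1)$ ``directly from the formula'' for $I_R$ fails for genuinely singular $\nu$. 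Integrating the pointwise inequality $\Phi_n(x_n+z)-\Phi_n(x_n)\le\psi(x_n+z)-\psi(x_n)+C\eps_n^{\gamma}$ against $\nu$ produces error terms of the form $C\eps_n^{\gamma}\,\nu(B_R)$ and $|D\Phi_n(x_n)-D\psi(x_n)|\int_{B}|z|\,\nu(dz)$, both of which may be infinite since $\nu$ is only assumed to have a finite second moment near the origin. One must split the integral at an $\eps$-dependent radius, treat the inner part by a second-order Taylor expansion together with the quantitative comparison of $D^2\Phi_n(x_n)$ and $D^2\psi(x_n)$, and bound the outer part using $\nu(|z|\ge\gamma_0)\le C\gamma_0^{-2}$; this splitting is the heart of the paper's proof of Lemma~\ref{lf1} and cannot be skipped.

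There are also quantitative problems in the probing step. For admissible $\Phi_n$ saturating the constraints, $\eps_n[F_n(\Phi_n)-F_n(\psi)]$ is only $O(\eps_n^{\gamma})$ with $\gamma=1-\alpha\max(1,k_1,k_2)<1$ (cf.\ Lemma~\ref{lem:tech2}), so the approximate-minimum inequality is of quality $\eps_n^{\gamma}$, not $\eta\eps_n$; calibrating the probe $|h|=\eps_n^{\sigma}$ against this and against the quadratic remainder $\eps_n^{-\alpha}|h|^2$ forces $\alpha<\sigma<\gamma$, which requires $\gamma>\alpha$ --- a condition not implied by the standing hypothesis $\alpha<(\max(1,k_1,k_2))^{-1}$. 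Even in your own accounting against $\eta\eps_n$, the admissible window is $\sigma\in(\tfrac{1+\alpha}{2},1)$, not all of $(\alpha,1)$. Finally, the Taylor remainder of $\Phi_n$ at scale $|h|$ involves $D^2\Phi_n$ on a whole neighbourhood of $x_n$, whereas the game as stated constrains it only at the point $x_n$; you must read the admissibility bounds as global norms for the expansion to make sense. These are genuine gaps in the uniformity argument rather than cosmetic ones.
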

%--------
Indeed, applying this lemma to $\psi=\phi(t_\ep + \ep, \cdot)$, we are
led to
$$
\phi(t_\ep, x_\ep) \ge \phi( t_\ep + \ep, x_\ep) - \ep F(t_\ep,
x_\ep,D\phi (t_\eps + \ep,x_\eps), D^2 \phi (t_\eps +\ep,x_\eps), I_R
[x_\eps,\phi(t_\eps+\ep,\cdot)]) +o(\ep)\, .
$$
Dividing by $\ep$ and letting $\ep \to 0$ we obtain (using the $C^2 $
character of $\phi$ and continuity of $F$),
$$
- \p_t \phi(t_0, x_0) + F(t_0, x_0, D\phi(t_0, x_0), D^2
\phi(t_0,x_0), I_R[x_0,\phi(t_0, \cdot)]) \ge 0.
$$
This allows us to conclude that $\ub$ is a viscosity super-solution of
(\ref{eq:pide}). The proof that $\uB $ is a sub-solution is analogous.
\end{proof}
We now turn to the  proof of the consistency lemma, \textit{i.e.}
Lemma~\ref{lem:consis-pide}.
%--------------------
\begin{proof}[Proof of Lemma~\ref{lem:consis-pide}]
  This lemma easily follows from Lemma~\ref{lf1}.  Indeed,
  Lemma~\ref{lf1} implies that for $\psi \in C^2(\R^N)$, there exists
  $o (\eps)$ depending on $\psi$ such that
$$
\Sc^\eps [\psi](t,x) \le \psi (x) - \eps F (t,x,D\psi (x),D^2 \psi ( x), I_R[x,\psi]) + o (\eps).
$$
On the other hand, by choosing $\Phi = \psi $ in the definition of
$\Sc^\eps$, we immediately get
$$
\Sc^\eps [\psi](t,x) \ge \psi (x) - \eps F (t,x,D\psi (x),D^2 \psi ( x), I_R[x,\psi]) \, .
$$
Combining the two previous inequalities yield the desired result.
\end{proof}
%----------
We next turn to the proof of the crucial Lemma, \textit{i.e.}
Lemma~\ref{lf1}.
%-----------------------------------------
\begin{proof}[Proof of Lemma~\ref{lf1}]
  By contradiction assume this is wrong, hence there exists $\eta >0$
  and $\eps_n \to 0$ such that for all $y \in B_R(x)$,
\begin{multline}
\label{lf3} \Phi(x)- \psi(x) - \Phi(y)+ \psi(y)\\ >-
\ep_n\left(F(t,x,D\Phi(x),D^2 \Phi(x),I_R[x,\Phi]) - F(t,x,D\psi(x),
D^2 \psi(x), I_R[x,\psi])\right) +\eta \ep_n \, .
\end{multline}
In order to simplify notation, $\ep_n$ is simply denoted by $\ep$.

Let us first take $y= x+ \ep^{\hal} w$ with $\|w\|=1$. Inserting into
\eqref{lf3} and using the $C^2$ character of $\Phi$ and $\psi$ gives
$$
-\ep^{\frac12}  D(\Phi-\psi)(x) \cdot w \ge  - C \ep \,
$$ where $C$ depends only on $\Phi, \psi, F, x$ and not on $\ep$.
Dividing by $\ep^{\hal}$, and using the fact that this is true for
every $w \in \mr^N$ with $\|w\|=1$, we find that
\begin{equation} \label{dp}
|D (\Phi-\psi)(x)|\le C \ep^{\frac12} \, .
\end{equation}
%--
Using now $y=x+\ep^{\frac13}w$ and doing a second order Taylor
expansion of $\Phi-\psi$, we find
$$
- \ep^{\frac13} D(\Phi-\psi)(x) \cdot w - \frac{ \ep^{\frac23}}{2}\langle
D^2(\Phi-\psi)(x)  w, w \rangle\ge  O(\ep)
$$
and using \eqref{dp} we obtain
$$
\langle  D^2(\Phi-\psi)(x)  w, w \rangle  \le O(\ep^{\frac16}) \, .
$$
Since this is true for any $w$ of norm 1, we deduce that
\begin{equation}\label{lmax}
  | D^2(\Phi-\psi)(x) | \le O(\ep^{\frac16})
\end{equation}
where $|A| = \lambda_{max}(A)$ denotes here the largest eigenvalue of a
symmetric matrix.
%--
Finally setting $y=x+z$ in \eqref{lf3} with $z \in B_R(0)$ gives
$$
\Phi(x+z)-\Phi(x)\le \psi(x+z)- \psi(x) + C \ep
$$
where the constant $C$ again depends on $x,\Phi,\psi,F$ but not on $z$.
Integrating this inequality with respect to $z$'s such that
$\gamma \le |z| \le R$ (with $\gamma >0$ to be chosen later), we
obtain
$$
\int_{|z| \ge \gamma} ( \Phi(x+z)-\Phi(x) ) \nu (dz ) \le \int_{ |z|
  \ge \gamma} ( \psi(x+z)-\psi(x) ) \nu (dz ) + C \eps \nu (|z| \ge
\gamma) \, .
$$
By \eqref{cond:nuPIDE}, we know that $\gamma^2 \nu ( |z| \ge \gamma)
\le C_\nu$ where $C_\nu$ is a constant that only depends on $\nu$,
we conclude that
\begin{equation}\label{estim1}
  \int_{ |z| \ge \gamma} ( \Phi(x+z)-\Phi(x) ) \nu (dz ) \le \int_{
    |z| \ge \gamma} ( \psi(x+z)-\psi(x) ) \nu (dz ) + C \eps
  \gamma^{-2}
\end{equation}
where  $C$ depends on $x,\Phi,\psi,F$ and $\nu$ (we do not
change the name of the constant). On the other hand, by using the $C^2$
regularity of $\Phi$ and $\psi$, we obtain
\begin{multline*}
  \int_{|z| \le \gamma} ( \Phi(x+z)-\Phi(x) - D \Phi (x) \cdot z)
  \nu (dz ) \le \int_{ |z| \le \gamma} ( \psi(x+z)-\psi(x) - D\psi (x)
  \cdot z)
  \nu (dz ) \\
  + \( \frac12 | D^2 \Phi (x) - D^2 \psi (x) | + C \gamma \)\int_{ |z|
    \le \gamma} |z|^2 \nu (dz)
\end{multline*}
where $C$ only depends on $\Phi$ and $\psi$. Now choosing $\gamma
= \eps^{\frac16}$ and using \eqref{lmax}, we   get
\begin{multline*}
  \int_{|z| \le \gamma} ( \Phi(x+z)-\Phi(x) - D \Phi (x) \cdot z)
  \nu (dz ) \le \int_{
    |z| \le \gamma} ( \psi(x+z)-\psi(x) - D \psi (x) \cdot z) \nu (dz ) \\
  + C\eps^{\frac16}\,
\end{multline*} where $C$ depends only on $  \Phi, \psi, x, F, \nu$.
Finally, from \eqref{lmax}, we have
$$
\left|\int_{ \ep^{1/6} \le |z|\le 1} ( - D\Phi(x) \cdot z + D \psi(x)
  \cdot z ) \nu(dz) \right|\le C \ep^{1/6}.
$$

Combining the above estimates with \eqref{estim1}, we conclude that
\begin{equation}\label{i}
  I_R[x,\Phi]\le I_R[x,\psi]+C\ep^{\frac16}
\end{equation}
where $C$ depends on $\Phi,\psi, F,x$ and $\nu$. Combining \eqref{dp},
\eqref{lmax} and \eqref{i}, the continuity of $F$ and its monotonicity
condition \eqref{cond:ellip} yield that
\begin{equation}\label{compf}
  F(t,x,D\Phi(x),D^2\Phi(x),I_R[x,\Phi]) -F(t,x,D\psi(x),D^2\psi(x),I_R[x,\psi]) \ge o(1) \, .
\end{equation}
Inserting this into \eqref{lf3} and choosing $y=x$, we find
$$0>\ep o(1)+ \eta \ep$$
a contradiction for $\ep$ small enough. Hence the lemma is proved.
\end{proof}
%--------------------------------------------
We conclude the proof of the convergence theorem
(Theorem~\ref{theo:conv-pide}) by proving that the terminal condition
is satisfied (Proposition~\ref{pro:icpide}).
%-----------------------------------
\begin{proof}[Proof of Proposition~\ref{pro:icpide}]
It is enough to prove that for some constant $C>0$ and for all $k
\in \mathbb{N}$, we have
\begin{equation}\label{estim:step}
  \forall (t,x) \in (0,T) \times \R^N,
  \quad |u^\eps (T- k\eps, x) - u_T ( x) | \le C k \eps \, .
\end{equation}
We argue by induction. The relation ~\eqref{estim:step} is clear for $k=0$.
We assume it is true for $k$ and we prove it for $k+1$.

We first consider a family $u_T^\eta$ of bounded $C^2$ functions
such that $\| u_T^\eta - u_T \|_{W^{2,\infty}} \le \eta$. From the
one-step dynamic programming principle \eqref{pdp:pide} and the
choice $\Phi = u_T^\eta$, we easily deduce  that
\begin{multline*}
  u^\eps (T-(k+1)\eps ,x) \\ \ge \inf_{y \in B_R (x)} \{ u^\eps (T- k
  \eps ,y) + u_T^\eta (x)- u_T^\eta (y)
  - \eps F(T-(k+1)\eps,x, D u_T^\eta (x), D^2 u_T^\eta (x) , I_R[x,u_T^\eta]) \} \\
  \ge \inf_{y \in B_R (x)} \{ u^\eps (T-k\eps,y) - u_T (y) \} + u_T
  (x) - C_1 \eps - 2 \eta \\
  \ge  - C k\eps  + u_T (x)- C_1 \eps - 2 \eta \end{multline*}
where we used \eqref{estim:step}  and we chose
$$
C_1 =  \max \{ F (t,x,p,A,l) : t\in (0,T), x \in B_R, |p| + |A|
\le 2 \| u_T\|_{W^{2,\infty}}, |l| \le C_\nu \|
u_T\|_{W^{2,\infty}} \} +1 \, .
$$
Changing $C$ if necessary in \eqref{estim:step} we can assume that $C\ge C_1$.
Since $\eta$ is arbitrary, we easily get an estimate from below:
$$u^\ep (T (k+1)\eps ,x)  - u_T(x) \ge - C(k+1)\ep
.$$
Using once again  the one-step dynamic programming principle
\eqref{pdp:pide} and \eqref{estim:step}, we next get
\begin{multline}
\label{estim:ueps-1}
u^\eps (T- (k+1)\eps,x) \\
\le \sup_{\stackrel{\Phi \in C^2(\mr^N)}{\|\Phi\|_\infty, |D \Phi (x)|, |D^2 \Phi
    (x)|\le \eps^{-\alpha} } } \inf_{y \in B_R (x)}
\bigg( u_T^\eta (y) + \Phi (x) - \Phi
(y) \\ - \eps F (T-(k+1)\eps, x, D\Phi (x),D^2 \Phi (x), I_R [x,\Phi])
\bigg)  + C k \eps + \eta \,.
\end{multline}
In order to get the upper bound in \eqref{estim:step}, we use the
consistency Lemma~\ref{lem:consis-pide}.  Applying it to $\Phi$, $\psi
= u^\eta_T$, $t= T- (k+1)\eps$, we get from \eqref{estim:ueps-1}
\begin{eqnarray*}
u^\eps (T- (k+1)\eps,x) &\le & u_T(x) - \eps ( F (T-(k+1)\eps,
x, D u_T^\eta, D^2 u_T^\eta (x), I_R [x, u_T^\eta]) + o(1))  \\ && + C k \eps + 2\eta  \\
& \le & u_T (x) + C_1\ep+ C k\eps + 2\eta\le u_T(x)+ C(k+1) \ep + 2 \eta
\end{eqnarray*}
and since $\eta$ is arbitrary, the proof of the proposition is now
complete.
\end{proof}

\subsection{Proof of Theorem~\ref{theo:conv-eikonal}}\label{sec3.2}

%*******************************************************************************************
  We first remark that for all $\eps >0$ and all $(t,x) \in (0,T]
  \times \R^N$, $\inf u_T \le \ue (t,x) \le \sup u_T$. We thus can
  consider the upper and lower relaxed limits $\overline{u}$ and
  $\underline{u}$ (they are finite) and we will prove below the
  following result.
  %-----------
  \begin{pro}\label{lem:eik-sup}
    The functions $\underline{u}$ and $\overline{u}$ are respectively
    a super-solution and a sub-solution of \eqref{eq:eikonal}.
  \end{pro}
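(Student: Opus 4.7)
The plan is to apply the Barles--Souganidis framework \cite{bs91}: stability holds by definition of the relaxed semi-limits $\ub$ and $\uB$, monotonicity is built into the sup-inf structure of the dynamic programming principle \eqref{pdp:eikonal}, and only consistency, i.e.\ passing to the limit in the DPP along smooth test functions, needs to be established.

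For the sub-solution property of $\uB$, I would take $\phi\in C^2$ such that $\uB-\phi$ admits a strict local maximum at $(t_0,x_0)\in(0,T)\times\mr^N$ and extract, by a standard half-relaxed limits argument, a sequence $(t_\ep,x_\ep)\to(t_0,x_0)$ of local maxima of $\ue-\phi$. Since $t_C-t_\ep\le 2\ep^{1/2}$ and $|x_C-x_\ep|\le 2\ep$, the point $(t_C,x_C)$ lies in this neighborhood for $\ep$ small, so $\ue(t_C,x_C)\le\phi(t_C,x_C)+(\ue-\phi)(t_\ep,x_\ep)$; substituting into \eqref{pdp:eikonal} and expanding $\phi$ to first order yields
\begin{equation*}
0\le\sup_{x_P\in E^+(x_\ep)}\inf_{x_C\in E^-(x_P)}\Bigl\{(t_C-t_\ep)\,\partial_t\phi(t_\ep,x_\ep)+(x_C-x_\ep)\cdot D\phi(t_\ep,x_\ep)\Bigr\}+r_\ep,
\end{equation*}
where $r_\ep = o(t_C-t_\ep)+O(|x_C-x_\ep|^2)$ is controlled uniformly by the $C^2$ norm of $\phi$.

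The argument then splits into three cases according to the sign of $v(x_0)$. When $v(x_0)>0$, Lipschitz continuity of $v$ gives, for $\ep$ small, $B_\ep(x_\ep)\subset\{v>0\}$ and $B_\ep(x_P)\cap\{v<0\}=\varnothing$; hence Carol is forced to stay at $x_C=x_P$ and $t_C-t_\ep=\ep/v(x_P)+\ep^2$ (the cut-off $C_\ep$ is inactive since $\ep/v(x_P)$ is of order $\ep$). Choosing the admissible $x_P = x_\ep+\ep D\phi/|D\phi|$ in the supremum, dividing by $\ep/v(x_0)$, and sending $\ep\to 0$ produces the sub-solution inequality $\partial_t\phi(t_0,x_0)+v(x_0)|D\phi(t_0,x_0)|\ge 0$. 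The case $v(x_0)<0$ is symmetric, with Carol now playing the active role. The super-solution property of $\ub$ follows from the same scheme starting from a strict local minimum of $\ub-\phi$: the resulting inequality is reversed, and Paul's and Carol's optimal directional choices are flipped accordingly.

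The main obstacle is the degenerate case $v(x_0)=0$. Here Lipschitz continuity of $v$ forces $|v|\le L\ep$ on $B_{2\ep}(x_0)$, so $\ep/|v|\ge 1/L\gg\ep^{1/2}$ and the cut-off $C_\ep$ is saturated at $\ep^{1/2}$ whenever a player can actually move, while if neither can move the time step is exactly $\ep^2$. In every sub-case the dominant contribution in the Taylor expansion is the temporal term $(t_C-t_\ep)\partial_t\phi$, of order at least $\ep^{1/2}$ when some move occurs and of order $\ep^2$ otherwise, whereas the spatial contribution $(x_C-x_\ep)\cdot D\phi$ is $O(\ep)$ and therefore negligible compared to the time step. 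Dividing by the time step forces $\partial_t\phi(t_0,x_0)\ge 0$, which is precisely the sub-solution inequality $-\partial_t\phi-v(x_0)|D\phi|\le 0$ at $v(x_0)=0$ (and, by the symmetric argument, $\partial_t\phi(t_0,x_0)\le 0$ for the super-solution). The exponents $1/2$ and $3/2$ in \eqref{cep} are tuned precisely so that this domination of the time step over the space step is guaranteed regardless of the local structure of $\{v=0\}$, which is why the construction extends to the whole range $1<\alpha<\beta<2$ mentioned after Theorem~\ref{theo:conv-eikonal}.
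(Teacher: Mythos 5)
Your plan follows the same route as the paper: the Barles--Souganidis stability/monotonicity/consistency scheme, with consistency obtained by Taylor expansion of the test function inside the dynamic programming principle and a trichotomy on the sign of $v(x_0)$; the paper merely isolates the consistency step in two separate lemmas (Lemmas~\ref{lem:consis-eik} and~\ref{lem:consis-eik2}) for the operators $\RE$ and $\Re$. Two points in your write-up need repair, though neither changes the outcome. First, for the sub-solution you start from $0\le\sup_{x_P}\inf_{x_C}\{\cdots\}+r_\ep$ and must therefore bound the supremum \emph{from above}; ``choosing'' $x_P=x_\ep+\ep D\phi/|D\phi|$ in the supremum only produces a lower bound, which is what the super-solution direction needs. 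The correct step is to take a near-optimal $x_P$ and use $(x_P-x_\ep)\cdot D\phi\le\ep|D\phi(t_\ep,x_\ep)|$ together with $\ep/v(x_P)=\ep/v(x_0)+O(\ep^2)$; this is exactly how the paper proves \eqref{eik-mino}, while the explicit choice of $y_P$ is reserved for the reverse inequality \eqref{eik-majo}. (Your chosen point does maximize the dominant term, so the bound is true, but the logic as written does not establish it.) Second, in the degenerate case $v(x_0)=0$ the game points lie in $B_{2\ep}(x_\ep)$, not in $B_{2\ep}(x_0)$, and $x_\ep\to x_0$ at an unknown rate; Lipschitz continuity therefore only gives $|v(x_P)|\le L(|x_\ep-x_0|+\ep)=o(1)$, not $O(\ep)$, so the cut-off need not saturate at $\ep^{1/2}$: the time step is $C_\ep(\ep/v_+(x_P))$, which is $\gg\ep$ but may be $\ll\ep^{1/2}$. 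The conclusion you want --- that the time step dominates the $O(\ep)$ spatial displacement whenever a player actually moves, and that the displacement vanishes when nobody moves --- survives with this weaker estimate, and that is how the paper's Lemma~\ref{lem:consis-eik} argues; but the intermediate claims ``$|v|\le L\ep$'' and ``time step of order at least $\ep^{1/2}$'' should be dropped.
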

  %----------
  In order to conclude that $\ue$ converges towards the unique
  solution of \eqref{eq:eikonal}, \eqref{eq:tc}, it is then enough to
  prove that $\overline{u} (T,x) \le u_T (x)    \le \underline{u} (T,x).$
  This is an easy consequence of the following proposition whose proof
  is postponed too.
%-----------
  \begin{lem}\label{lem:tc}
    Given $\delta,R>0$ there exists $C>0$ such that for all $t \in
    (T-\delta, T]$ and all $x \in B_R(0)$
    \begin{equation}\label{estim:tc}
      |\ue (t,x) - u_T (x) | \le C (T-t + \eps^{\frac12})\; \|u_T\|_{Lip}.
    \end{equation}
  \end{lem}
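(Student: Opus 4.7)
The plan is to bound the total displacement of the game trajectory by $C(T-t+\eps^{1/2})$ and then conclude using the Lipschitz continuity of $u_T$. Since the final score of Game~\ref{game:eik} equals $u_T(x_{\mathrm{fin}})$, where $x_{\mathrm{fin}}$ denotes the terminal position, and $\ue(t,x)$ is a sup-inf over such outcomes, the estimate $|u_T(x_{\mathrm{fin}})-u_T(x)|\le \|u_T\|_{Lip}\,|x_{\mathrm{fin}}-x|$ propagates directly to $|\ue(t,x)-u_T(x)|$.

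The core of the argument is to analyze the displacement-to-time ratio in a single sub-step. A sub-step either leaves the position unchanged while advancing time by $\eps^2$, or chooses a point in $B_\eps$ while advancing time by $C_\eps(\eps/|\v_\pm(\cdot)|)$. Inspecting the three branches of $C_\eps$ defined in \eqref{cep}, one checks that the ratio displacement/time is bounded by $\max(|\v_\pm(\cdot)|,\eps^{1/2})$ in the two branches where $\eps/|\v_\pm| \ge \eps^{3/2}$. The only pathological case is the branch $|\v_\pm| \ge \eps^{-1/2}$, where the time increment is truncated from below to $\eps^{3/2}$ and the ratio could be as large as $\eps^{-1/2}$; this branch must be excluded a priori.

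To exclude it, I would use that $\v$ is Lipschitz continuous, which yields the linear growth $|\v(y)| \le |\v(0)| + L|y|$. A discrete Gronwall-type estimate applied to the sequence of positions $(x_k)$ visited by the game shows that any trajectory starting at $x \in B_R(0)$ at time $t \in (T-\delta,T]$ stays inside a ball $B_{R_1}(0)$ whose radius depends only on $R$, $\delta$, $\v$. Setting $M := \sup_{B_{R_1}}|\v|$ and restricting to $\eps$ small enough that $M < \eps^{-1/2}$ (a condition depending only on $R,\delta,\v$) eliminates the bad branch; each sub-step then has speed at most $M$.

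The game stops the first time $t_C \ge T$, and since a single sub-step advances time by at most $\eps^{1/2}$, the total elapsed time is at most $T-t+\eps^{1/2}$. Summing per-step displacements, one gets $|x_{\mathrm{fin}}-x| \le M(T-t+\eps^{1/2})$, and the Lipschitz bound on $u_T$ yields \eqref{estim:tc} with $C$ proportional to $M$. The substantive step is the a priori confinement of the trajectory used to rule out the pathological branch of $C_\eps$; once that is in hand, the rest is direct bookkeeping on the dynamics \eqref{eq:timereset}.
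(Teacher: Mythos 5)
Your proposal is correct and its skeleton coincides with the paper's: reduce \eqref{estim:tc} to a per-step bound of the form $|x_{k+1}-x_k|\le C\,(t_{k+1}-t_k)$ along any play of the game, observe that the only obstruction is the branch of $C_\eps$ where the time increment is truncated to $\eps^{\frac32}$ (which occurs only when $|\v|\ge \eps^{-\frac12}$), rule that branch out by confining the trajectory to a fixed ball on which $\v$ is bounded, and finish with the Lipschitz bound on $u_T$ and the fact that the total elapsed time is at most $T-t+O(\eps^{\frac12})$. Where you differ is in the confinement step. The paper argues by a first-exit contradiction: before the trajectory leaves $B_{R/2}(x)$ it sits in a fixed ball where $|\v|\le C_R$, so each moving step costs time at least $\eps/C_R$ while covering at most $2\eps$ in space, and traversing a distance $R/2$ would therefore take time at least $R/(4C_R)$; this forces the restriction $\delta< R/(4C_R)$ (so the paper really proves the lemma only for $\delta$ small, which suffices for its use in establishing the terminal condition). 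Your route---linear growth $|\v(y)|\le|\v(0)|+L|y|$ plus a discrete Gronwall inequality---buys the statement for arbitrary $\delta$, with $R_1$ depending on $\delta$, and so matches the lemma as literally stated. One point you should make explicit, because as written there is a whiff of circularity (you say the bad branch ``must be excluded a priori'' yet you exclude it only after the Gronwall confinement): the recursion fed into Gronwall must be verified in \emph{all three} branches of $C_\eps$, including the pathological one. It does hold there, precisely because in that branch $|\v(x_{k+1})|\ge \eps^{-\frac12}$ gives $\eps\le |\v(x_{k+1})|\,\eps^{\frac32}$, so the displacement is still at most $\max\bigl(|\v(x_{k+1})|,\eps^{\frac12}\bigr)(t_{k+1}-t_k)$; with that observation the Gronwall step is legitimate and the rest of your bookkeeping goes through.
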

%------------
  The comparison principle for \eqref{eq:eikonal} then permits to
  conclude.
\medskip

%*******************************************************************************************
It remains to prove Proposition~\ref{lem:eik-sup} and
Lemma~\ref{lem:tc}. In order to do so, we proceed as in the proof of
Proposition~\ref{pro:pide} by introducing an operator $\Sc^\eps
[\phi]$. More precisely, if $\phi :(0,T] \times \R^N \to \R$
is a bounded function, we let
\begin{equation}\label{defS}
\Sc^\eps [\phi](t,x) = \sup_{ x_P\in  E^+(x) }
\left\{ \inf_{ x_C \in E^-(x_P)  }
 \phi (t_C,x_C) \right\}
\end{equation}
where $t_C$ is defined by \eqref{eq:timereset} and $E^\pm$
is defined by (\ref{EP}).  It is convenient to write
\begin{eqnarray*}
t_ P &=& t + T_P (x,x_P) \\
t_C &=& t_P + T_C (x_P,x_C)
\end{eqnarray*}
where $T_P$ and $T_C$ are defined as follows
\begin{eqnarray}\label{tp}
  T_P (x,x_P)=
  \left\{\begin{array}{ll} C_\ep [ \eps (\v_+(x_P))^{-1} ] &
      \mbox{ if } B_\ep(x) \cap\{ \v>0\} \neq \emptyset \\
      \eps^{2} & \mbox{ if not } ,\end{array}\right.\\
  \nonumber T_C (x_P,x_C)=
  \left\{ \begin{array}{ll}
      C_\ep[ \eps (\v_-(x_C))^{-1} ]  & \mbox{ if } B_\ep(x_P) \cap \{ v<0\}\neq \emptyset\\
      \eps^{2} & \mbox{ if not} \, .
        \end{array}\right.
\end{eqnarray}
We also introduce
\begin{eqnarray*} \RE [\phi](t,x)&=& \sup_{x_P\in E^+(x)}\phi(t_P, x_P)\\
\Re[\phi](t,x)&=& \inf_{x_C\in E^-(x_P)} \phi(t_C,x_C).
\end{eqnarray*}
The two important properties of $\RE$ are:
\begin{eqnarray}
  \label{mono3}
 \text{it commutes with constants:}&& \RE [\phi+ c] = \RE [\phi]+c \text{ for any constant }
  c \in \R;\\
\label{mono4}
 \text{ it is monotone:} && \text{if } \phi \le \psi \text{ then } \RE [\phi] \le \RE [\psi].
\end{eqnarray}
We now rewrite $\Sc_\eps$ as
$$
\Sc_\ep[\phi]= \Re[\RE[\phi]] \, .
$$
We remark that $\Re$ and $\Sc_\eps$ also commute with constants and
are monotone.
One can also observe that $\Re$ and $\RE$ have opposite values if
$v$ is changed into $-v$ and $\phi $ into $-\phi$.

Recall that the  dynamic programming principle  in this context is
\eqref{pdp:eikonal} \textit{i.e.} with the new terminology
\begin{equation}
\label{pdp:eikonal-bis}
u^\ep(t,x)= \Sc_\ep[u^\ep](t,x)= \Re[\RE [u^\ep]](t,x).\end{equation}

The core of the proof of  of Proposition~\ref{lem:eik-sup} lies in the
following ``consistency lemma''.
%------------
\begin{lem}[Consistency lemma for the eikonal equation]\label{lem:consis-eik}
  Consider a $C^1$ bounded smooth function $\phi: (0,T] \times \R$.
  Given $r>0$ small enough and $(t_0,x_0) \in (0,T-r) \times \R^N$,
  there exists a function $o(1)$ depending only on $(\eps,r)$, $\phi$, $(t_0,x_0)$
  and the speed function $v$ such that $o (1) \to 0$ as $(\eps, r) \to
  0$, and the following holds:  for all $(t,x) \in B_r(t_0,x_0)$, there exists
  $x_P,y_P, x_C, y_C \in B_\eps (x)$, such that
\begin{eqnarray}\label{eik-mino}
\RE [\phi](t,x)- \phi (t,x)
  &\le& T_P (x,x_P)  \bigg(\partial_t \phi (t,x)
  +  v_+(x) | D \phi (t,x)|  +o(1) \bigg)  \, , \\
\label{eik-majo}
  \RE [\phi](t,x)-\phi(t,x) &\ge& T_P(x, y) \bigg(\partial_t \phi (t,x)
  +  v_+(x) | D \phi (t,x)|  +o(1) \bigg)  \, ,
\end{eqnarray}
and
\begin{eqnarray}\label{eik-mino1}
  \Re [\phi](t,x)- \phi (t,x)
  &\le& T_C (x,y_C)  \bigg(\partial_t \phi (t,x)
  +  v_-(x) | D \phi (t,x)|  +o(1) \bigg)  \, , \\
\label{eik-majo1}
\Re [\phi](t,x)- \phi (t,x)
&\ge& T_C (x,x_C)  \bigg(\partial_t \phi (t,x) +  v_-(x) | D \phi (t,x)|
+o(1) \bigg)  \, .
\end{eqnarray}
\end{lem}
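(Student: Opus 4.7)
The plan is to Taylor-expand $\phi$ to first order at $(t,x)$ and analyze the sup defining $\RE$ by a case split on the sign of $v(x)$. Using the duality noted in the text just before the lemma (under $(v,\phi)\mapsto(-v,-\phi)$ the operator $\Re$ coincides with $\RE$ and $v_-$ with $v_+$), inequalities (\ref{eik-mino1})--(\ref{eik-majo1}) reduce to (\ref{eik-mino})--(\ref{eik-majo}), so I focus on the latter. The starting estimate, uniform over $(t,x)\in\overline{B_r(t_0,x_0)}$, is
\[
\phi(t+\tau,y)-\phi(t,x)=\tau\,\partial_t\phi(t,x)+D\phi(t,x)\cdot(y-x)+\omega(\tau+|y-x|)(\tau+|y-x|),
\]
with $\omega$ a modulus of continuity of the first derivatives of $\phi$. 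Since every admissible timestep satisfies $\eps^2\le T_P(x,x_P)\le\eps^{1/2}$ and $|y-x|\le\eps$, the remainder is $T_P(x,x_P)\cdot o(1)$ uniformly in $(t,x)$.

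The first case is $B_\eps(x)\cap\{v>0\}=\varnothing$, which forces $v_+(x)=0$: then $E^+(x)=\{x\}$, $T_P(x,x)=\eps^2$, and $\RE[\phi](t,x)-\phi(t,x)=\eps^2\partial_t\phi+o(\eps^2)$, giving both inequalities with $x_P=y_P=x$. Otherwise, for $x_P\in E^+(x)=B_\eps(x)\cap\{v>0\}$, the Taylor expansion reads
\[
\phi(t_P,x_P)-\phi(t,x)=T_P(x,x_P)\,\partial_t\phi(t,x)+D\phi(t,x)\cdot(x_P-x)+T_P(x,x_P)\,o(1).
\]
For (\ref{eik-majo}) I would pick $y_P=x+\eps D\phi(t,x)/|D\phi(t,x)|$ whenever $D\phi(t,x)\ne 0$ (and any $y_P\in E^+(x)$ otherwise), checking that $y_P\in E^+(x)$ for $\eps$ small. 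For (\ref{eik-mino}) I would choose $x_P$ as a near-maximizer of $\phi(t_P,\cdot)$ over $E^+(x)$ and bound $|D\phi\cdot(x_P-x)|\le\eps|D\phi|$. In the regular subcase $v(x)>0$, Lipschitz continuity of $v$ yields $T_P(x,z)=\eps/v(z)=\eps/v(x)+o(\eps)$ uniformly in $z\in B_\eps(x)$, so $\eps|D\phi|=T_P(x,z)\,v(x)|D\phi|+T_P(x,z)\,o(1)$, producing the claimed right-hand sides.

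The delicate regime is $v(x)=0$ with $E^+(x)\supsetneq\{x\}$: Lipschitz continuity of $v$ gives $v_+(z)\le\|v\|_{\mathrm{Lip}}\eps$ on $B_\eps(x)$, so $\eps/v_+(z)\ge\|v\|_{\mathrm{Lip}}^{-1}$, and the clipping by $C_\eps$ forces $T_P(x,z)=\eps^{1/2}$ identically for $\eps$ small. Since $v_+(x)=0$ and $D\phi\cdot(x_P-x)=O(\eps)=o(\eps^{1/2})=T_P\cdot o(1)$, both inequalities reduce to $\RE[\phi](t,x)-\phi(t,x)=T_P\,\partial_t\phi+T_P\,o(1)$, matching the desired form since $v_+(x)|D\phi|=0$. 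This matching of the $C_\eps$-clipped timescale against the $O(\eps)$ spatial displacement is the main obstacle; once handled, everything else is routine uniform Taylor analysis, and the bounds for $\Re$ follow from the duality noted at the outset.
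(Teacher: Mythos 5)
Your overall strategy coincides with the paper's: reduce \eqref{eik-mino1}--\eqref{eik-majo1} to \eqref{eik-mino}--\eqref{eik-majo} via the $(v,\phi)\mapsto(-v,-\phi)$ symmetry, Taylor-expand at $(t,x)$, dispose of the degenerate case $E^+(x)=\{x\}$ (where $T_P=\eps^2$ and $x_P=x$), and then use a near-optimizer for the upper bound and $y_P=x+\eps D\phi(t,x)/|D\phi(t,x)|$ for the lower bound. The gap lies in how you split the remaining cases. The lemma must hold for every $(t,x)\in B_r(t_0,x_0)$, in particular when $v(x_0)=0$, so that $v(x)$ ranges over small values of \emph{both} signs; your two subcases ($v(x)>0$, treated as if $v$ were bounded below by a fixed positive constant, and $v(x)=0$ exactly) do not cover this regime. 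Concretely: (i) when $0<v(x)\lesssim\eps^{1/2}$ the cutoff $C_\eps$ is active, so the identity $T_P(x,z)=\eps/v(z)$ invoked in your ``regular subcase'' is false (and $\eps/v(z)=\eps/v(x)+o(\eps)$ itself requires $v$ bounded away from $0$, the error being of order $\eps\cdot L_v\eps/(v(x)v(z))$); (ii) for such $x$ the point $y_P=x+\eps D\phi/|D\phi|$ need not lie in $E^+(x)$, since $v(y_P)\ge v(x)-L_v\eps$ can be negative when $0<v(x)<L_v\eps$, so your lower-bound competitor may be inadmissible; (iii) the configuration $v(x)<0$ with $B_\eps(x)\cap\{v>0\}\neq\varnothing$ is not addressed at all.

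These gaps are repairable with ideas you already deploy in the ``delicate regime'', and the paper's proof shows the cleaner organization: it splits according to the value of $T_P(x,x_P)$ for the near-optimal $x_P$, namely $\eps^2$ (degenerate), $\eps^{1/2}$ (clipped), or $\eps/v_+(x_P)$ (unclipped). In the clipped case $v_+(x_P)\le\eps^{1/2}$ forces $v_+(x)|D\phi(t,x)|=o(1)$ and $|D\phi(t,x)\cdot(x_P-x)|\le\eps|D\phi(t,x)|=T_P\cdot o(1)$, so the transport term is absorbed into the error exactly as in your $v(x)=0$ analysis; in the unclipped case $v(x_P)\ge\eps^{1/2}$ forces $v(x)\ge\eps^{1/2}-L_v\eps>0$ and $v(y_P)\ge\eps^{1/2}-2L_v\eps>0$, which is what legitimizes both the admissibility of $y_P$ and the replacement of $\eps/T_P$ by $v_+(x)+o(1)$. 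The unifying fact your write-up is missing is that $\eps/T_P(x,z)=v_+(x)+o(1)$ for every non-degenerate $z\in E^+(x)$, whether or not the clipping is active.
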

%--------------------------------------------
We can deduce from this lemma the following one
%--------------------------------------------
\begin{lem}[Consistency lemma for the eikonal equation - second
  version]\label{lem:consis-eik2} We consider a function
   $\phi: (0,T] \times \R$ that is bounded and $C^1$ and
  $(t_0,x_0) \in (0,T) \times \R^N$.  There exist a function $o(1)$
  such that $o (1) \to 0$ as $(\ep, r) \to 0$ and positive numbers
  $m_\ep$, $M_\ep$ such that for all $(t,x) \in B_r(t_0,x_0)$ and all
  $(\eps, r)$ small enough
\begin{multline}\label{eq:consis-eik}
m_\ep \bigg(\partial_t \phi (t_0,x_0) +  v(x_0) | D \phi (t_0,x_0)|  +o(1)\bigg)  \le
\Sc^\eps [\phi](t,x)- \phi (t,x)\\
\le M_\ep \bigg(\partial_t \phi (t_0,x_0) +  v(x_0) | D \phi (t_0,x_0)|  +o (1) \bigg)  \, .
\end{multline}
\end{lem}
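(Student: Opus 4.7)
I would deduce Lemma~\ref{lem:consis-eik2} from Lemma~\ref{lem:consis-eik} through the factorization $\Sc^\eps[\phi] = \Re[\RE[\phi]]$. A naive substitution of $\RE[\phi]$ into the $\Re$-estimates \eqref{eik-mino1}--\eqref{eik-majo1} is not allowed, since $\RE[\phi]$ is only Lipschitz. The workaround is to unfold
\[
\Sc^\eps[\phi](t,x)=\sup_{x_P\in E^+(x)}\; \inf_{x_C\in E^-(x_P)} \phi\bigl(t+T_P(x,x_P)+T_C(x_P,x_C),\,x_C\bigr),
\]
apply \eqref{eik-mino1} (resp.\ \eqref{eik-majo1}) pointwise in $x_P$ to the smooth function $\phi$, and then apply \eqref{eik-mino} (resp.\ \eqref{eik-majo}) to the outer supremum.

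\textbf{Executing the upper bound.} Fix $x_P\in E^+(x)$ and notice that $(t+T_P,x_P)$ lies in a small neighborhood of $(t_0,x_0)$, since $T_P\le \eps^{1/2}$ and $|x_P-x|\le \eps$. Applying \eqref{eik-mino1} at $(t+T_P,x_P)$ and using the uniform continuity of $\partial_t \phi$, $D\phi$, and $v_\pm$ near $(t_0,x_0)$ to replace the point of evaluation by $(t_0,x_0)$ modulo $o(1)$, one obtains
\[
\inf_{x_C} \phi(t+T_P+T_C, x_C) \le \phi(t+T_P,x_P) + T_C(x_P,y_C)\bigl(\partial_t\phi(t_0,x_0) + v_-(x_0)|D\phi(t_0,x_0)| + o(1)\bigr).
\]
The additive $T_C$-term is essentially independent of $x_P$ up to $o(1)$, so taking $\sup_{x_P}$ separates it from $\sup_{x_P} \phi(t+T_P,x_P) = \RE[\phi](t,x)$, which is controlled in turn by \eqref{eik-mino}. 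Summing the two contributions yields an upper bound for $\Sc^\eps[\phi](t,x)-\phi(t,x)$ of the shape $T_P\bigl(\partial_t\phi+v_+|D\phi|\bigr) + T_C\bigl(\partial_t\phi+v_-|D\phi|\bigr) + o(T_P+T_C)$, with all derivatives evaluated at $(t_0,x_0)$. The lower bound is obtained symmetrically from \eqref{eik-majo} and \eqref{eik-majo1}.

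\textbf{Main obstacle.} The delicate step is to reconcile the mixed expression $T_P v_+(x_0)|D\phi| + T_C v_-(x_0)|D\phi|$ with the target factor $M_\eps v(x_0)|D\phi|$, and to extract uniform positive $m_\eps$, $M_\eps$. I would split according to the sign of $v(x_0)$. If $v(x_0)>0$, then $v>0$ in a neighborhood of $x_0$, Carol's alternative $B_\eps(x_P)\cap\{v<0\}=\varnothing$ is triggered, so $x_C=x_P$ and $T_C=\eps^2$; since $T_P \ge \eps^{3/2} \gg \eps^2$, the $T_C$-contribution is absorbed in $o(T_P)$ and the main part reduces to $T_P v(x_0)|D\phi|$, giving $M_\eps \sim T_P$. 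The case $v(x_0)<0$ is symmetric, with Paul's move forced. In the delicate case $v(x_0)=0$, Lipschitz continuity of $v$ implies $v_\pm(x)=O(r+\eps)$ uniformly on the set of points involved, so both mixed terms are $o(1)\cdot(T_P+T_C)$ and match the vanishing right-hand side $v(x_0)|D\phi|=0$. Setting $M_\eps$ and $m_\eps$ as the extremal values of $T_P+T_C$ (both trapped between $2\eps^{3/2}$ and $2\eps^{1/2}$ by \eqref{cep}) completes the proof.
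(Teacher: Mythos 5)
Your proposal is correct and follows essentially the same route as the paper: both compose the two half-step estimates of Lemma~\ref{lem:consis-eik} (the paper via monotonicity and commutation with constants of the operators $\RE,\Re$, you by unfolding the sup--inf and applying the inner estimate at the shifted point $(t+T_P,x_P)$), both split on the sign of $v(x_0)$ so that the degenerate half-step contributes only $\eps^2(\partial_t\phi+o(1))=o(T_P)$, and both extract a single $m_\eps,M_\eps$ by a sign-dependent extremal choice of the time increments (the paper's $\delta_1$ device). The only slips are cosmetic: the degenerate branches give $T_P,T_C=\eps^2$, so $T_P+T_C$ is bounded below by $2\eps^2$ rather than $2\eps^{3/2}$, and for the upper bound you do not actually need the $T_C$-term to be independent of $x_P$, since $\sup(f+g)\le\sup f+\sup g$ together with the extremal choice suffices.
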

%--------------------------------------------
The proofs of the two previous lemmas are postponed.
We first explain how to derive Proposition~\ref{lem:eik-sup}
from Lemma~\ref{lem:consis-eik2}.
%----------
\begin{proof}[Proof of Proposition~\ref{lem:eik-sup}]
  We are going  show that $\ub $ is a
  super-solution. Following Definition~\ref{defi2}, let $\phi$ be a
  $C^1$ function such that $\ub-\phi$ admits a minimum $0$ at
  $(t_0,x_0)$ on $\mathcal{V}_0 = B_\delta(t_0,x_0)$. Without loss
  of generality, we can assume that this minimum is strict, see
  \cite{cil92}.  Arguing as in the proof of Proposition
  \ref{pro:pide}, we deduce that $u^\ep-\phi$ admits a minimum at
  $(t_\ep, x_\ep) $ on $\mathcal{V}_0$ with $(t_\ep, x_\ep) \to (t_0,
  x_0)$ as $\ep \to 0$; and for all $(t,x) \in \mathcal{V}_0$
$$
u^\ep( t,x) \ge \phi(t,x) + (u^\ep(t_\ep, x_\ep)- \phi(t_\ep, x_\ep))
: = \phi(t,x) + \xi_\ep
$$
From the properties \eqref{mono3}, \eqref{mono4} of $\Sc^\ep$ and the dynamic programming principle
(\ref{pdp:eikonal-bis}), we have
$$
u^\ep(t_\ep,x_\ep)\ge \Sc^\ep[ \phi + \xi_\ep](t_\ep,x_\ep) =
\Sc^\ep[ \phi](t_\ep,x_\ep) + \xi_\ep.
$$
Since $u^\ep(t_\ep, x_\ep) = \phi(t_\ep, x_\ep) + \xi_\ep$ it follows that
$$
\phi(t_\ep, x_\ep) \ge \Sc^\ep[ \phi](t_\ep,x_\ep).
$$
Using Lemma \ref{lem:consis-eik2} applied at $(t_\ep, x_\ep)$ we deduce the existence of $m_\ep$ such that
$$
 m_\ep \( \p_t\phi(t_0, x_0) + v(x_0) |D\phi(t_0, x_0)|+ o (1)\)\le 0.
$$
Dividing by $m_\ep>0$ then letting $\ep \to 0$ and $r\to 0$, we obtain
$$
 \p_t\phi(t_0, x_0) + v(x_0) |D\phi(t_0, x_0)| \le 0.
$$
We thus conclude that $\ub$ is a super-solution. The proof that $\uB$
is a sub-solution is entirely parallel and
Proposition~\ref{lem:eik-sup} is proved.
\end{proof}
%------------

We now turn to the core of the argument, \textit{i.e.}
\begin{proof}[Proof of Lemma~\ref{lem:consis-eik}] First we note that
  it suffices to prove \eqref{eik-mino}, \eqref{eik-majo}, because
  \eqref{eik-mino1} and \eqref{eik-majo1} follow by changing $\phi$
  into $- \phi$ and $v$ into $-v$.

Consider $(t,x)\in B_r(t_0,x_0)$ and  $x_P$ an $\eps^3$-optimal position starting from $x$
at time $t$ i.e. such that  $x_P \in E^+(x)$ and
$$
\RE [\phi] (t,x) = \phi (t_P, y) + O (\eps^3) \, .
$$
First, remark that $x_P \in B_\eps (x) \subset B_{r+\eps} (x_0)$.  In
particular, $|v(x_P)| \le |v(x_0)| + L_v (r+\eps)$ where $L_v$ is the
Lipschitz constant of the function $v$. Hence, $|v(y)| \le
\frac1{\eps^{\frac12}}$ for $\eps$ small enough (only depending on
$L_v$, $r$ and $x_0$).  In particular in view of the definitions
\eqref{cep} and \eqref{tp}, $T_P(x,x_P) = \eps^2$, $T_P(x,x_P) = \eps
v_+(x_P)^{-1}$ or $T_P(x,x_P) = \eps^{\frac12}$.

We now distinguish these three cases.
\paragraph{Case $T_P(x,x_P )= \eps^2$.} This can only happen  if $x_P=
x$, $v_+(x) = 0$ and $t_P = t + \eps^2$, so  we may write, by Taylor expansion of $\phi$,
\begin{eqnarray*}
\RE [\phi] (t,x) - \phi (t,x) &=&
 \phi(t_P,x_P)- \phi(t,x)+O(\ep^3)\\
 & = &
 \phi (t+\eps^2,x) - \phi (t,x)+O(\ep^3)  \\
&=& \eps^2 (\partial_t \phi (t,x) + v_+ (x) |D \phi (t,x)| + o_\eps (1))
\end{eqnarray*}
and we obtain the desired result \eqref{eik-mino}--\eqref{eik-majo}.

\paragraph{Case $T_P (x,x_P) = \eps^{\frac12}.$} This case happens only if    $0 <
v(x_P) \le \eps^{\frac12}$.  This implies in particular that
$|v(x)| \le \eps^{\frac12} + L_v \eps$.  Then we simply write
\begin{eqnarray*}
\RE [\phi] (t,x) - \phi (t,x) &=& \phi (t+\eps^{\frac12},x_P) - \phi (t,x) + O (\eps) \\
& = & \eps^{\frac12} (\partial_t \phi (t,x) + o_\eps (1)) \\
& = & \eps^{\frac12} (\partial_t \phi (t,x) + v_+(x) |D \phi (t,x)| + o_\eps (1))
\end{eqnarray*}
and we obtain the desired result in this case too.

\paragraph{Case $T_P (x,x_P) = \eps v (x_P)^{-1}$.} Then
$v(x_P) \ge \eps^{\frac12}$.  This implies in particular that $v(x)
\ge \eps^{\frac12} - L_v \eps$.  We write in this case, Taylor expanding $\phi$ again
\begin{multline*}
\RE [\phi] (t,x) - \phi (t,x) =
 \phi(t_P, x_P)- \phi(t,x)+O(\ep^3)
  \\
 =  \frac{\eps}{v (x_P)} (\partial_t \phi (t,x) + o_\eps (1))
+ (x_P -x) \cdot D \phi (t,x)+ O(\eps^2) \, .
\end{multline*}
Hence, we are done if $D \phi (t,x)=0$. If not, we can write
\begin{eqnarray*}
  \RE [\phi] (t,x)- \phi (t,x)
  & \le & \frac{\eps}{v (x_P)} (\partial_t \phi (t,x) + o_\eps (1))
  + \eps |D \phi (t,x)| + O (\eps^2) \\
  & \le &   \frac{\eps}{v (x_P)} (\partial_t \phi (t,x) + v_+ (x) |D \phi (t,x)|+ o_\eps (1))
  \, .
\end{eqnarray*}
To get the reversed inequality in this last case, we consider $y_P = x
+ \eps \frac{D\phi (t,x)}{|D \phi (t,x)|}$. Remark that $v(y_P) \ge
v(x) - L_v \eps \ge \eps^{\frac12} - 2 L_v \eps>0$ for $\ep $ small
enough (only depending on $L_v$). Then, since $y_P \in E^+(x)$, we
have
\begin{eqnarray*}
\RE [\phi] (t,x)- \phi (t,x)
& \ge &  \phi (t+ T_P (x,y_P),y_P) - \phi (t,x) \\
& \ge & T_P (x,y_P) (\partial_t \phi (t,x) + o_\eps (1)) + \eps |D \phi (t,x)| + O (\eps^2) \\
& \ge & T_P (x,y_P) (\partial_t \phi (t,x) + \frac{\eps}{ T_P (x,y_P)} |D \phi (t,x)| + o_\eps (1)) \\
& \ge & T_P (x,y_P) (\partial_t \phi (t,x) + \max (\eps^{\frac12}, v(y_P)) |D \phi (t,x)| + o_\eps (1)).
\end{eqnarray*}
If $v(y_P) \ge \eps^{\frac12}$, then we use the Lipschitz continuity of $v_+$ in order to get
$$
\RE [\phi] (t,x)- \phi (t,x) \ge T_P (x,y_P) (\partial_t \phi (t,x) +
v_+ (x) | D \phi (t,x)| + o_\eps (1) ).
$$
If $v({y}_P) < \eps^{\frac12}$, then $\ep^{\frac12} - L_v \eps \le v (x) \le \eps^{\frac12} + L_v \eps $
and we conclude that $v (x) =o(1)$ and the result is obtained in this case too.
\end{proof}
%----------
\begin{proof}[Proof of Lemma~\ref{lem:consis-eik2}]
Recall that $\Sc_\ep[\phi]= \Re[\RE[\phi]]$. We distinguish cases.

\paragraph{Case $v(x_0)>0$.} In this case, we can write $v(x_0) \ge
2\delta_0 >0$.  For $\eps$ small enough and $r\le \hal$, we have for
all $x_P \in E^+ (x) \subset B_\eps (x)$, $\delta_0 \le v(x_P) \le
v(x_0) + 1$ and $T_P(x,x_P) = \frac{\ep}{v(x_P)}$. We thus obtain from
Lemma~\ref{lem:consis-eik} for all $(t,x) \in B_r (t_0,x_0)$, the
existence of $x_P\in B_\ep(x)$ such that
\begin{eqnarray*}
  \RE [\phi] (t,x) &\le& \phi (t,x) + \frac{\eps}{v(x_P)} (\partial_t \phi (t,x) + v_+(x) |D\phi (t,x)|
  + o(1))  \\
  & \le &  \phi (t,x) + \frac{\eps}{\delta_1} (\partial_t \phi (t_0,x_0) + v(x_0) |D\phi (t_0,x_0)|
  + o(1))
\end{eqnarray*} as $(\ep, r) \to 0$,
where
$$
\delta_1 =
\left\{ \begin{array}{ll}
\delta_0 & \text{ if } \partial_t \phi (t_0,x_0) + v(x_0) |D\phi (t_0,x_0)|>0, \\
v(x_0)+1 & \text{ if } \partial_t \phi (t_0,x_0) + v(x_0) |D\phi (t_0,x_0)|\le 0 .
\end{array}
\right.
$$
Since $\Re$ commutes with constants and is monotone, the previous
inequality implies the following one
\begin{eqnarray*}
  \Sc_\eps [\phi] (t,x) -\phi (t,x) &\le& \Re [\phi] (t,x) - \phi (t,x) \\
  && + \frac{\eps}{\delta_1} (\partial_t \phi (t_0,x_0) + v(x_0) |D\phi (t_0,x_0)|
  + o(1)). \end{eqnarray*}

  Now from  \eqref{eik-mino1} we deduce
$$
\Re[\phi](t,x) \le T_C(x,y_C) \( \p_t \phi(t_0,x_0) + o(1)\)
$$
for some $y_C \in B_\ep(x)$ and since $v(x_0)\ge 2\delta_0>0$ we have
$B_\ep(x) \cap \{v<0\}= \varnothing$ for $r$ small enough and thus
$T_C(x,y_C)=\ep^2$. It follows that
  \begin{eqnarray*}
    \Sc_\eps [\phi] (t,x) -\phi (t,x)
    &\le  &  \eps^2 \partial_t \phi (t_0,x_0) + o (\eps^2) \\
    & & + \frac{\eps}{\delta_1} (\partial_t \phi (t_0,x_0)
    + v(x_0) |D\phi (t_0,x_0)|  + o(1)) \\
    & \le &  \frac{\eps}{\delta_1} (\partial_t \phi (t_0,x_0)
    + v(x_0) |D\phi (t_0,x_0)|  + o(1)) \, .
\end{eqnarray*} which establishes  the upper bound part in \eqref{eq:consis-eik}.
The case $v(x_0)<0$ is analogous.

\paragraph{Case $v(x_0)=0$.}
By \eqref{eik-mino}, we may write
 \begin{eqnarray*}
   \RE[\phi](t,x)& \le &  \phi(t,x) + T_P(x,x_P) (\p_t\phi(t,x)+o(1))\\
   & \le & \phi(t,x)+ M_{\ep,1}(\p_t\phi(t,x)+o(1))
\end{eqnarray*}
for some positive constant $M_{\ep,1}$, since $T_P$ is bounded above
and below by positive constants depending on $\ep$ (the last relation
is obtained by discussing according to the sign of $\p_t\phi(t,x)$).
From \eqref{eik-mino1} we obtain similarly that
$$\Re[\phi](t,x)\le \phi(t,x) + M_{\ep,2} (\p_t\phi(t,x)+o(1)) $$ for some positive $M_{\ep,2}$.
 Combining the two relations we obtain
 $$\Sc^\ep[\phi](t,x)- \phi(t,x)\le (M_{\ep,1}+M_{\ep,2})(\p_t\phi(t,x)+o(1)).$$ The lower bound is entirely parallel,
  and the desired result follows in this case.
The proof is now complete.
\end{proof}

%*******************************************************************************************
\begin{proof}[Proof of Lemma \ref{lem:tc}]
  It is enough to study the sequence of iterated positions and times
  starting from $(t,x) \in (T-\delta,T] \times B_R (0)$. The dynamic
  programming principle (\ref{pdp:eikonal}) gives optimal positions
  $x_P$, $x_C$ such that $\ue(t,x)= \ue(t_C, x_C)$ for the
  corresponding time $t_C$.  Letting $t_0=t$ and $x_0=x$, and
  iterating this, we may define for $k \in \{1,\dots, K\}$ optimal
  positions $x_k$ (corresponding to the $x_C$) and corresponding times
  $t_k$ with $K$ such that $t_K \ge T$ and $t_{K-1} < T$ such that
$$
\ue(t_0, x_0)= \ue(t_k, x_k)= \ue(t_K, x_K) = u_T (x_K).
$$
It follows that
$$
\ue (t,x) -u_T(x) = u_T(x_K) - u_T (x) \; .
$$
We conclude from the previous equality that, in order to prove
\eqref{estim:tc}, it is enough to prove that for any $k \in \{0,K-1\}$
\begin{equation}\label{estim:discrete-speed}
| x_{k+1} - x_{k} | \le C (t_{k+1} - t_{k})
\end{equation}
with $C$ not depending on $\eps$ and $(t,x)$ (but possibly on $\delta$ and
$R$).

We notice that the supremum and the infimum defining $\ue$ may not be
attained. In this case, we simply choose first an $\eps^2$-optimal
position, then an $\eps^3$-optimal position and iterating this, we obtain
an error which is smaller than $\eps$ as soon as $\eps \le \frac12$.

In order to prove such a result, we first remark that it suffices to
prove that
$$
|x_P - x | \le C (t_P-t) \quad \text{ and } \quad |x_C -x_P|\le C
(t_C-t_P) \; .
$$
Hence, when $x_P=x$ and $x_C=x_P$, this is automatically satisfied.
If not, we always have $|x_P -x|\le \ep$ and $|x_C -x_P|\le \ep$.
Hence, we only need to check that for such time steps
$$
C_\ep ( \ep |\v(x_P)|^{-1} ) \ge C^{-1} \eps \quad \text{ and }
\quad C_\ep ( \ep |\v(x_C)|^{-1} ) \ge C^{-1} \ep
$$
for $C>0$ well chosen. This is equivalent to showing
$$
 |\v(x_i)| \le C  \quad \text{ for } i=P,C \; .
$$
By Lipschitz continuity of $v$,  there exists $C_R$ such that for all $y
\in B_{R+1} (0)$
 $$
 |\v(y)| \le C_R \; .
 $$
Recall that $x \in B_R (0)$.
If the finite sequence $(x_k)_{k=0, \dots, K}$ remains in $B_{R/2}
(x) \subset B_{R+1} (0)$, we are done: we choose $C=C_R$.
We now claim that the finite sequence does remain in $B_{R/2}$.

We argue by contradiction.
If not, consider $k_0$,  the smallest integer $k\le K$ such that $x_k \in
B_{R} (x) \setminus B_{R/2}(x) \subset B_{R+1} (0)$. Consider also
the number  $k_1$ ($\le k_0$) of steps such that Paul and Carol move
(\textit{i.e.} $x_P \neq x$ and $x_C \neq x_P$). This implies that the
corresponding time increments are at least $\eps / C_R$. This also
implies that $2 k_1 \eps \ge R/2$. Indeed,
$$
\frac{R}2 \le |x_{k_0} - x | \le \sum_{i=0}^{k_0-1} |x_{i+1} -x_i|
\le k_1 \times (2 \ep) \; .
$$
Recalling that $t \in (T-\delta, T]$, it is now enough to choose
$$
\delta < \frac{R}{4C_R}
$$
to conclude that $ t_{k_0}- t\ge \frac{k_1\ep}{C_R} \ge \frac{R}{4C_R} >\delta $ and thus  $t_{k_0} > T$, and get a contradiction.
\end{proof}

%*******************************************************************************************

\subsection{Proof of Theorem~\ref{theo:conv-dislo}}

First we denote $\ub= \liminf_* \ue$ and $\uB= \limsup^* \ue$. These relaxed semi-limits
are finite since we always have $\inf u_T \le \ue \le \sup u_T$ and
$u_T$ is assumed to be uniformly bounded.
 The theorem follows as above from the following two results
%-------------
\begin{pro}\label{lem:dislo-semi}
The functions $\ub$ and $\uB$ are respectively a super-solution and a sub-solution
of \eqref{eq:dislo}.
\end{pro}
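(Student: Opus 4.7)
The plan is to follow the Barles-Souganidis framework used in the proofs of Propositions~\ref{pro:pide} and \ref{lem:eik-sup}, adapted to the four-layer game. I would first introduce the scheme operator
\begin{equation*}
\Sc^\eps[\phi](t,x) = \sup_{(x_P^+,\phi^+)\in\mathcal{C}^+(x)} \inf_{x_C^+\in\mathcal{P}^+(x,x_P^+,\phi^+)} \inf_{(x_C^-,\phi^-)\in\mathcal{C}^-(x_C^+)} \sup_{x_P^-\in\mathcal{P}^-(x_C^+,x_C^-,\phi^-)} \phi(t^-,x_P^-),
\end{equation*}
with $t^-$ defined by \eqref{timereset:dislo}. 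By its nested sup/inf structure, $\Sc^\eps$ commutes with constants and is monotone, and the dynamic programming principle \eqref{pdp:dislo} reads $\ue = \Sc^\eps[\ue]$. For any $C^2$ bounded test-function $\phi$ such that $\ub-\phi$ has a strict minimum $0$ at $(t_0,x_0)$ on $\mathcal{V}_0 = (0,T) \times B_{R+1}(x_0)$, the standard perturbation argument supplies $(t_\eps,x_\eps) \to (t_0,x_0)$ and $\xi_\eps \to 0$ with $\ue \ge \phi + \xi_\eps$ on $\mathcal{V}_0$, whence the two properties of $\Sc^\eps$ combined with the dynamic programming principle yield
\begin{equation*}
\phi(t_\eps, x_\eps) \ge \Sc^\eps[\phi](t_\eps,x_\eps).
\end{equation*}

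The core of the argument will be a consistency lemma: for $\phi \in C^2_b$ with $D\phi(t_0,x_0)\ne 0$, there exist $\tau_\eps>0$ and $o_\eps(1)\to 0$ depending on $\phi, t_0, x_0$ such that, for $(t,x)$ close to $(t_0,x_0)$,
\begin{equation*}
\Sc^\eps[\phi](t,x)-\phi(t,x) \ge \tau_\eps\Bigl(\partial_t\phi(t_0,x_0) + \kappa_*[x_0,\phi(t_0,\cdot)]\,|D\phi(t_0,x_0)| + o_\eps(1)\Bigr).
\end{equation*}
To produce this lower bound, I would test the outermost sup with the natural choice $(x_P^+,\phi^+) = (x,\phi(t,\cdot))$, which forces Carol's first-round half-space to be the $\phi(t,\cdot)$-superlevel set, and then argue that the specific inner choice $(x_C^-,\phi^-)=(x_C^+,\phi(t^+,\cdot))$ saturates Carol's inner infimum up to $o(\tau_\eps)$. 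With these test-function choices in force, Paul's innermost sup is realized along the boundary of the half-space defined by $\phi(t^+,\cdot)$, contributing $\eps|D\phi(t_0,x_0)|$ via Taylor expansion, while the time step $\tau_\eps=C_\eps(\eps|\kappa_*|^{-1})$ accounts for the $\partial_t\phi$ term and the factor $\kappa_*$ in front of $|D\phi|$. The critical technical step, modelled on Lemma~\ref{lf1} but with the set-monotonicity \eqref{eq:monotonie++} of $\kappa_*$ substituting for the pointwise ellipticity of $F$, is the verification that no player can strictly improve over these test-function choices by more than $o(\tau_\eps)$. Dividing $\phi(t_\eps,x_\eps)\ge\Sc^\eps[\phi](t_\eps,x_\eps)$ by $\tau_\eps>0$ and letting $\eps,r\to 0$ then yields $-\partial_t\phi(t_0,x_0)-\kappa_*[x_0,\phi(t_0,\cdot)]|D\phi(t_0,x_0)|\ge 0$.

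The degenerate case $D\phi(t_0,x_0)=0$ would be handled separately: the cut-off $C_\eps$ forces all time steps into $[\eps^2,\eps^{1/2}]$ by \eqref{cep2}, the gradient term is absorbed into the $o$-term, and only $-\partial_t\phi(t_0,x_0)\ge 0$ survives. The sub-solution statement for $\uB$ is proved symmetrically: Paul's outer choice is reduced to $(x,\phi(t,\cdot))$ by the same monotonicity argument, the upper semi-continuity \eqref{eq:usc-courbure} of $\kappa^*$ replaces \eqref{eq:lsc-courbure}, and the opposite inequality delivers the sub-solution condition with $\kappa^*$.

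I expect the main obstacle to be the analogue of Lemma~\ref{lf1}: ruling out strategic deviations by $(x_P^+,\phi^+)$ and $(x_C^-,\phi^-)$ from the test function $\phi$. In the PIDE case the pointwise ellipticity of $F$ combined with Taylor expansion sufficed; here the substitute is the set-monotonicity \eqref{eq:monotonie++} of $\kappa^*,\kappa_*$, and extracting the required level-set inclusions from a hypothetical gain will first require proving pointwise bounds of type $|D(\phi^\pm-\phi)|(x)=O(\eps^{1/2})$ and $|D^2(\phi^\pm-\phi)|(x)=O(\eps^{1/6})$, in analogy with \eqref{dp} and \eqref{lmax}, and then reconstructing level-set comparisons via second-order Taylor expansion together with the semi-continuity properties \eqref{eq:usc-courbure}--\eqref{eq:lsc-courbure}. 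A careful case split between the three regimes of $C_\eps$ (whether $\eps|\kappa|^{-1}$ lies below $\eps^{3/2}$, in $[\eps^{3/2},\eps^{1/2}]$, or above $\eps^{1/2}$), in the spirit of Lemma~\ref{lem:consis-eik}, will also be needed.
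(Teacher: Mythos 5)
Your overall architecture is the right one and matches the paper's: the scheme operator $\Sc^\eps$, its monotonicity and commutation with constants, the strict-extremum perturbation argument giving $\phi(t_\eps,x_\eps)\ge \Sc^\eps[\phi](t_\eps,x_\eps)$, a consistency estimate, and division by the dominant time step. You also correctly locate the difficulty (controlling the opponent's rounds) and correctly identify the set-monotonicity \eqref{eq:monotonie++} as the substitute for ellipticity. But the mechanism you propose for the key step is the wrong one and would not close. You plan to rule out strategic deviations by first proving pointwise bounds $|D(\phi^\pm-\phi)|(x)=O(\eps^{1/2})$, $|D^2(\phi^\pm-\phi)|(x)=O(\eps^{1/6})$ as in \eqref{dp}--\eqref{lmax} and then ``reconstructing level-set comparisons''. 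That transfer works for the PIDE because $I[x,\Phi]$ is \emph{linear} in the increments $\Phi(x+z)-\Phi(x)$, so a pointwise inequality on increments can be integrated against $\nu$. Here $\k^*[y,\vp]$ depends on the whole superlevel set $\{\vp\ge\vp(y)\}$, a nonlinear functional of $\vp$; agreement of $\vp$ and $\phi$ to second order at the single point $x$ (or $y$) says nothing about their superlevel sets on $B_R(y)$, and an inequality of the form $\vp(y+z)-\vp(y)\le\phi(y+z)-\phi(y)+C\eps$ only yields an inclusion into a \emph{shifted} level set, whose $\k^*$ is not within $o(1)$ of the right one for a singular kernel. The paper avoids this entirely: near-optimality of the opponent's choice (inequality \eqref{cx3}) directly forces the exact inclusion \eqref{inclset} of superlevel sets, hence $\k^*[y,\vp]\le\k^*[x_0,\phi(t_0,\cdot)]$ by monotonicity (Lemma~\ref{lem:comparaison-courbure}), and the contradiction in Lemma~\ref{lclaim} is then obtained simply by evaluating at $z=y$ — no derivative bounds at all.

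Two further gaps. First, your choice $\phi^+=\phi(t,\cdot)$ for the lower bound leaves the infimum over $z\in\mathcal{P}^+$ unlocalized: the infimizing $z$ may lie anywhere in $B_R(y)$, where $\partial_t\phi(t,z)$ is only bounded, not close to $\partial_t\phi(t_0,x_0)$, so the Taylor expansion in time produces an error $O(T_P)$ rather than $o(T_P)$ — exactly the order you need to keep. The paper's Lemma~\ref{lem:cdr-1} uses $\vp=\phi(t,\cdot)-\alpha_\eps$ with a cutoff vanishing on $B_\eps(x)$ and equal to $\eps^{1/4}$ outside $B_{2\eps}(x)$, precisely to force the infimum into $B_{2\eps}(x)$ (see \eqref{eq:localise}). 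Second, a single consistency lemma with one prefactor $\tau_\eps$ hides the fact that the two rounds have time steps of different orders depending on the sign of $\k^*[x_0,\phi(t_0,\cdot)]$ (resp.\ $\k_*$): when the curvature is positive the opponent's round contributes only $\eps^2=o(T_P)$, when it is negative the roles reverse, and when it vanishes both steps must be added. The paper handles this with four separate one-round estimates (Lemmas~\ref{lem:cdr-1}--\ref{lem:cdr-2bis}, related by $\Re[\phi]=-\RE[-\phi]$) and an explicit case analysis on the sign of the curvature; some version of that case split is unavoidable and is missing from your sketch.
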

%-------------
\begin{lem}\label{lem:dislo-ci}
Given $R, \delta$, there exists $C>0$ such that for all $t \in
(0,\delta)$ and all $x \in B_R(0)$
\begin{equation}\label{estim:tc-dislo}
|\ue (t,x) - u_T (x) | \le C (T-t + \eps^{\frac12})\; .
\end{equation}
\end{lem}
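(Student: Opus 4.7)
My plan is to adapt the strategy used in the PIDE case (Proposition~\ref{pro:icpide}) and the eikonal case (Lemma~\ref{lem:tc}): I would construct a barrier out of $u_T$ and propagate it through the dynamic programming principle (\ref{pdp:dislo}). Since $u_T \in W^{2,\infty}(\R^N)$ and $K$ satisfies (\ref{cond:N}), the integral curvatures $\k^*[y,u_T]$ and $\k_*[y,u_T]$ are uniformly bounded on $B_{R+2}(0)$ by a constant $K_0$ depending only on $\|u_T\|_{W^{2,\infty}}$, $R$ and $K$, and $|Du_T|\le L:=\|u_T\|_{Lip}$. I would pick $M\ge K_0 L + 1$ and set
\[
\psi^\pm(t,x) = u_T(x) \pm M(T - t + \eps^{1/2}).
\]

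The heart of the proof is to show that $\psi^+$ is a \emph{scheme supersolution} and $\psi^-$ is a \emph{scheme subsolution}, i.e., for every $(t,x) \in (T-\delta,T]\times B_R(0)$ and every admissible one-round time increment $h$ given by (\ref{timereset:dislo}),
\[
\Sc^\eps\bigl[\psi^+(t+h,\cdot)\bigr](t,x) \le \psi^+(t,x), \qquad \Sc^\eps\bigl[\psi^-(t+h,\cdot)\bigr](t,x) \ge \psi^-(t,x),
\]
where $\Sc^\eps$ is the operator on the right-hand side of (\ref{pdp:dislo}). The key point is that the one-round discrepancy of the scheme on $u_T$ is controlled by the geometric speed $h \cdot \k^{*/*}[\cdot, u_T]\cdot |Du_T| \le h K_0 L$ plus cutoff residuals of size $O(\eps^{3/2})$, while the linear-in-time barrier contributes exactly $-Mh$; the choice $M\ge K_0 L +1$ dominates both. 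The $\eps^{1/2}$ offset built into $\psi^\pm$ accommodates the saturating regime $C_\eps(r) \le \eps^{1/2}$ of the cutoff (a single game round can cost up to $\eps^{1/2}$ of time, hence at most $M\eps^{1/2}$ of barrier). Because $\Sc^\eps$ commutes with constants and is monotone (parallel to (\ref{mono3})--(\ref{mono4})), induction backwards from time $T$, where $\psi^-(T,\cdot)=u_T-M\eps^{1/2}\le u^\eps(T,\cdot)=u_T\le u_T+M\eps^{1/2}=\psi^+(T,\cdot)$, propagates the inequality $\psi^-\le u^\eps\le\psi^+$ to all $(t,x)\in(T-\delta,T]\times B_R(0)$, giving (\ref{estim:tc-dislo}) with $C = M$.

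The main obstacle is proving the consistency-type estimate $\Sc^\eps[u_T](t,x)=u_T(x)+O(h\,K_0 L)+o(h)$ that underlies the supersolution property. This parallels Lemma~\ref{lem:consis-eik2} for the eikonal case but must account for the richer geometry of the four-step game, in particular the potential jumps of size up to $R$ in steps 2 and 4 (where $x_C^+$ or $x_P^-$ is chosen in a ball of radius $R$). The saving grace is that the maximizer cannot produce a large increment of $u_T$ without the opponent responding optimally: when Paul picks $\phi^+$ with large $\k^*[x_P^+,\phi^+]$, Carol's admissible set $\{z\in B_R(x_P^+):\phi^+(z)\ge \phi^+(x_P^+)\}$ is a thin sliver near the level surface where $|u_T(z)-u_T(x_P^+)|$ is necessarily small (else monotonicity (\ref{eq:monotonie++}) together with the $W^{2,\infty}$ control on $u_T$ gives a contradiction), and the same happens symmetrically for Paul's step~4 when Carol plays $\phi^-=u_T$. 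Splitting the analysis by the three regimes of $C_\eps$ as in the proof of Lemma~\ref{lem:consis-eik}, each case yields an estimate $\le Mh - (\text{positive})$ provided $M\ge K_0 L+1$; the $\eps^{1/2}$ term absorbs the worst-case cutoff contribution, which is exactly where the stated $\eps^{1/2}$ in (\ref{estim:tc-dislo}) originates.
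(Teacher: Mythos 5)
Your overall skeleton matches the paper's: a linear-in-time barrier built from $u_T$, propagated backward through the dynamic programming principle using monotonicity and commutation with constants of $\Sc^\eps$ (in fact the paper proves the sharper bound $|u^\eps(t,x)-u_T(x)|\le C(T-t)$, so the $\eps^{1/2}$ offset built into your $\psi^\pm$ is not needed). The gap is in the step you yourself flag as the main obstacle. Your control of the $O(R)$-sized jumps in the moves where a player picks a point in a ball of radius $R$ rests on the claim that Carol's admissible set $\{z\in B_R(x_P^+): \phi^+(z)\ge \phi^+(x_P^+)\}$ is ``a thin sliver near the level surface where $|u_T(z)-u_T(x_P^+)|$ is necessarily small''. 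This is false: that set is a full half-space-like region delimited by $\G^+$ (everything on one side of the hypersurface, of diameter up to $2R$), on which $u_T$ can vary by as much as $\|u_T\|_{Lip}\, R=O(1)$, not $O(h)$. A two-sided consistency estimate $\Sc^\eps[u_T]=u_T+O(h)$ of the kind you announce cannot be obtained this way.

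What actually closes the argument --- and what the paper does --- is asymmetric. For the lower bound on $u^\eps$, Paul plays the explicit strategy $(x_P^+,\phi^+)=(x,u_T)$; Carol is then confined to $\mathcal{P}^+(x,x,u_T)\subset\{u_T\ge u_T(x)\}$, where the infimum of $u_T$ is trivially $\ge u_T(x)$: no smallness is needed, only the sign of the constraint. Carol's half of the round is handled by the already-established one-sided estimate for $\Re$ (Lemma~\ref{lem:cdr-2bis}), which costs at most $T_C\,(|\k_*[x,u_T]|\,|Du_T(x)|+1)$ and is absorbed by the linear barrier. For the reverse inequality one must show that no choice of $\phi^+$ gains Paul more than $O(h)$; this is exactly the level-set inclusion argument of Lemma~\ref{lem:comparaison-courbure}: either the superlevel set of $\phi^+$ at $y$ is not contained in that of $u_T$, in which case Carol can answer with a point where $u_T$ drops, or it is contained, in which case $0<\k^*[y,\phi^+]\le \k^*[x_0,u_T]+o(1)$ and Paul's spatial gain $\eps|Du_T|$ is dominated by $T_P\,\k^*\,|Du_T|$. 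Without this curvature-comparison mechanism your induction does not close. (A smaller point: $\k^*[y,u_T]$ alone need not be uniformly bounded near critical points of $u_T$; it is the product $|\k^*[y,u_T]|\,|Du_T(y)|$ that the paper's constant $C$ controls.)
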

%-------------
Lemma~\ref{lem:dislo-ci} implies that $\ub (T,x) \ge u_T (x) \ge \uB (T,x)$ and the comparison
principle for \eqref{eq:dislo} (see \cite{imbert}) permits to conclude.

%%%%%%%%%%%%%%%%%%%%%%%%%%%%%%%%%
It remains to prove Proposition~\ref{lem:dislo-semi} and Lemma~\ref{lem:dislo-ci}.
We first introduce some notation, analogous to that of Section \ref{sec3.2}. Given $x \in \mr^N$ and $\phi \in C^2$,
$T_P$ and $T_C$ are defined by
\begin{eqnarray}\label{tctp}
  T_P (x,\phi) = \left\{\begin{array}{ll} C_\eps(\ep \k^*[x,\phi]^{-1})& \text{ if }
D \phi (x) \neq 0 \text{ and }  \k^*[x,\phi] > 0
      \\ \ep^2 & \text{ if not} \end{array} \right.
  \\ \nonumber
  T_C (x,\phi) =  \left\{\begin{array}{ll}
      C_\ep(\ep |\k_*[x,\phi]|^{-1}) & \text{ if } D \phi (x) \neq 0 \text{ and } \k_*[x,\phi] < 0 \\
      \ep^2 & \text{ if not} \, . \end{array} \right.
\end{eqnarray}
It is convenient
to write
\begin{eqnarray*}
  t^+ &=& t + T_P (x_P^+,\phi^+) \,  \\
  t^- & =& t^+ + T_C (x_C^-,\phi^-)
\end{eqnarray*}
where $T_P$ and $T_C$ are defined by (\ref{tctp}).

We now introduce for any arbitrary function $\phi: (0,T) \times \R^N
\to \R$ the following operator
$$
\Sc^\eps [\phi](t,x) = \sup_{ (x_P^+,\phi^+) \in \mathcal{C}^+ (x) }
\left\{
    \inf_{ x_C^+ \in {\mathcal{P}}^+ (x,x_P^+,\phi^+) } \left\{
      \inf_{ (x_C^-, \phi^-) \in \mathcal{C}^- (x_C^+)} \left\{
        \sup_{x_P^- \in {\mathcal{P}}^- (x_C^+, x_C^-,\phi^-) } \left\{
          \phi (t^-, x_P^- ) \right\}\right\}\right\}\right\}
$$
where $t^-$ is defined in \eqref{timereset:dislo}.
The dynamic programming principle~\eqref{pdp:dislo} can be rewritten as follows
\begin{equation} \label{pdp:dislo-bis}
u^\eps(t,x) = \Sc^\eps [u^\eps](t,x).
\end{equation}

 For the reader's convenience, we recall here
the definitions of $\mathcal{C}^\pm(x)$ and $\mathcal{P}^\pm (x,y,\varphi)$:
\begin{eqnarray*}
  \mathcal{C}^{\pm}(x) &=& \{ (y,\varphi) \in B_\eps (x)  \times C^2 (\R^N) :
\pm \varphi(y) \ge \pm \varphi(x)  \} \, , \\
  \mathcal{P}^{+} (x,y,\varphi) & =&
\left\{ \begin{array}{ll}
\{ z \in B_R(y) :  \varphi (z) \ge \varphi (y) \} & \text{ if } D\varphi(y) \neq 0 \text{ and }   \kappa^* [y,\varphi] > 0 \\
 \{x\} & \text{ if not} ,\end{array} \right. \\
  \mathcal{P}^{-} (x,y,\varphi) & =&
\left\{ \begin{array}{ll}
\{ z \in B_R(y) :  \varphi (z) \le  \varphi (y) \} &  \text{ if } D\varphi(y) \neq 0 \text{ and }   \kappa_* [y,\varphi] <  0 \\
  \{ x \} & \text{ if not} .
\end{array} \right.
  \end{eqnarray*}

Let us also define the following operators
\begin{eqnarray*}
\RE [\phi](t,x)  &=&  \sup_{ (y,\vp) \in \mathcal{C}^+ (x)
} \inf_{z \in {\mathcal{P}}^+ (x,y,\vp) }
\phi (t+ T_P (y,\vp),z) \, , \\
\Re [\phi](t,x) &=&  \inf_{ (y, \vp) \in \mathcal{C}^-
(x)}
        \sup_{z \in {\mathcal{P}}^- (x,y,\vp) } \phi (t+ T_C (y,\vp),z) \, .
\end{eqnarray*}
The reader can notice that
\begin{eqnarray}\label{SR}
\Sc^\eps [\phi](t,x) &=& \RE [\Re [\phi]] (t,x) \, , \\
\nonumber \Re[\phi] (t,x) &=& -\RE [-\phi] (t,x) \,
\end{eqnarray}
In order to get the second equality, we need to remark that
\begin{eqnarray*}
(z,\varphi) \in \mathcal{C}^+ (y) &\Leftrightarrow& (z, -\varphi)
 \in \mathcal{C}^- (y) \, , \\
z \in {\mathcal{P}}^+ (y,\varphi) &\Leftrightarrow& z \in {\mathcal{P}}^- (y,-\varphi) \, ,\\
T_P (z,\varphi) &=& T_C (z,-\varphi) \, .
\end{eqnarray*}
Moreover, the operator $\RE$ is monotone and commutes with constants:
\begin{eqnarray}
\label{monotonieR}
  \phi_1 \le \phi_2 &\Rightarrow& \RE[\phi_1 ]\le \RE[\phi_2]
\\
\label{rpc}
 \RE[\phi+ c]&=& \RE[\phi ] + c
\end{eqnarray}
for all $c \in \R$.
%----------
The proof of Proposition~\ref{lem:dislo-semi} relies on four consistency lemmas.
 Before stating  them, let us point out that we will write
$\kappa[\cdot ]_+ $ for the positive part of $\kappa[\cdot]$ and
$\kappa[\cdot]_-$ for the negative part (both being nonnegative).
%- LM 1 - FMCM ---------------------------------------------------------------------
\begin{lem}[Estimate from below for $\RE$] \label{lem:cdr-1} Consider
  a $C^2$ function $\phi : (0,T] \times \R^N \to \R$ and $(t_0,x_0)
  \in (0,T) \times \R^N$.  There exists a function $o (1)$ depending
  on $\phi$ and $(\eps,r)$ such that $o (1) \to 0$ as $(\eps, r)\to 0$
  and such that for all $(t,x) \in B_r (t_0,x_0)$ there exists
  $(y,\varphi) \in \mathcal{C}^+ (x)$ such that
  \begin{itemize}
  \item
     if $D\phi (t_0,x_0) \neq 0$,
\begin{equation}\label{eq:consis-red-geq1}
 \RE[\phi](t,x) - \phi (t,x) \ge T_P (y,\varphi)  \bigg(\partial_t \phi (t_0,x_0)
+ \kappa_* [x_0, \phi (t_0,\cdot)]_+  | D \phi (t_0,x_0)| + o (1)\bigg),
\end{equation}
 \begin{equation}
 \label{cdp1}
 |D\varphi(y)|\ge \hal |D\phi(t_0,x_0)|
\end{equation}
and
\begin{equation}\label{eq:consis-red-geq2}
 \k_*[x_0,\phi (t_0,\cdot)] + o (1)\le \k_* [y,\varphi] \le
\k^* [y,\varphi] \le \k^*[x_0,\phi (t_0,\cdot)] + o (1);
\end{equation}
\item
if $D\phi (t_0,x_0) = 0$,  \eqref{eq:consis-red-geq1} still holds
true with the convention
$$\kappa_* [x_0, \phi (t_0,\cdot)]_+  | D \phi (t_0,x_0)|=0$$
and $T_P (y,\varphi)=\eps^2$.\end{itemize}
\end{lem}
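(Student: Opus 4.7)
The plan is to construct an explicit pair $(y,\varphi)\in\mathcal{C}^+(x)$ for each $(t,x)\in B_r(t_0,x_0)$ such that evaluating the supremum defining $\RE[\phi](t,x)$ at this pair already yields the claimed lower bound. In the non-degenerate case $D\phi(t_0,x_0)\neq 0$, I would take $\varphi:=\phi(t_0,\cdot)$ (the test function frozen in time) and $y:=x+\varepsilon\hat{p}$ with $\hat{p}:=D\phi(t_0,x_0)/|D\phi(t_0,x_0)|$. In the degenerate case $D\phi(t_0,x_0)=0$, I would take $y:=x$ with the same $\varphi$ and exploit the default time step $T_P=\varepsilon^2$ from \eqref{tctp}.

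The three structural claims of the lemma come out quickly. Since $\varphi$ does not depend on $t$, $\varphi(y)-\varphi(x)=\phi(t_0,y)-\phi(t_0,x)=\varepsilon|D\phi(t_0,x_0)|+O(\varepsilon^2+\varepsilon r)$, which is non-negative for $(\varepsilon,r)$ small enough, giving $(y,\varphi)\in\mathcal{C}^+(x)$. The gradient bound \eqref{cdp1} is then immediate from the continuity of $D\phi$ at $(t_0,x_0)$. The curvature bracket \eqref{eq:consis-red-geq2} follows directly from the semicontinuity properties \eqref{eq:usc-courbure}--\eqref{eq:lsc-courbure} applied at $x_0$ with fixed $U=\varphi=\phi(t_0,\cdot)$, letting $y\to x_0$; in particular, if $\kappa_*[x_0,\phi(t_0,\cdot)]>0$, lower semicontinuity ensures $\kappa_*[y,\varphi]>0$ for $(\varepsilon,r)$ small, hence $\kappa^*[y,\varphi]>0$ and $T_P(y,\varphi)=C_\varepsilon(\varepsilon/\kappa^*[y,\varphi])\sim\varepsilon/\kappa^*[x_0,\phi(t_0,\cdot)]$.

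To lower-bound $\RE[\phi](t,x)-\phi(t,x)=\inf_{z\in\mathcal{P}^+(x,y,\varphi)}\phi(t+T_P,z)-\phi(t,x)$, I Taylor-expand
\[
\phi(t+T_P,z)-\phi(t,x)=\bigl(\phi(t_0,z)-\phi(t_0,x)\bigr)+(t+T_P-t_0)\,\partial_t\phi(t_0,z)+O\bigl((t-t_0+T_P)^2\bigr),
\]
and use the membership constraint $\varphi(z)\ge\varphi(y)$, i.e.\ $\phi(t_0,z)\ge\phi(t_0,y)$, to bound the first bracket from below by $\phi(t_0,y)-\phi(t_0,x)=\varepsilon|D\phi(t_0,x_0)|+O(\varepsilon^2+\varepsilon r)$. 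The key algebraic conversion is $\varepsilon=T_P\,\kappa^*[y,\varphi]\,(1+o(1))$ in the non-trivial regime, which turns $\varepsilon|D\phi(t_0,x_0)|$ into $T_P\,\kappa^*[y,\varphi]\,|D\phi(t_0,x_0)|\ge T_P\,\kappa_*[x_0,\phi(t_0,\cdot)]_+\,|D\phi(t_0,x_0)|+T_P\cdot o(1)$ via \eqref{eq:consis-red-geq2}, producing the main terms on the right of \eqref{eq:consis-red-geq1}. The degenerate case collapses to the elementary $\phi(t+\varepsilon^2,x)-\phi(t,x)=\varepsilon^2(\partial_t\phi(t_0,x_0)+o(1))$.

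The principal technical obstacle is the contribution $T_P\,\partial_t\phi(t_0,z)$, which must be $T_P(\partial_t\phi(t_0,x_0)+o(1))$ uniformly over $z\in\mathcal{P}^+\subset B_R(y)$---a set whose diameter is controlled by the fixed support radius $R$ of $K$ and does \emph{not} shrink with $(\varepsilon,r)$. A naive application of the continuity of $\partial_t\phi$ yields an error of order $T_P\cdot O(1)$, which does not fit in the $T_P\cdot o(1)$ budget required by the lemma. The remedy must be a subtler choice of $\varphi$: I would replace $\phi(t_0,\cdot)$ by a carefully calibrated modification (for instance, a smooth truncation that coincides with $\phi(t_0,\cdot)$ on a shrinking neighborhood of $x_0$ and is pushed downward outside, with the scale chosen to vanish with $\varepsilon$) designed so that $\mathcal{P}^+$ localizes near $y$ at a rate matching the modulus of continuity of $\partial_t\phi$, while the integral curvatures $\kappa^*[y,\varphi]$ and $\kappa_*[y,\varphi]$ still obey \eqref{eq:consis-red-geq2}. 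Balancing this trade-off between spatial localization of the admissible half-space and preservation of the curvature asymptotics is, as I see it, the crux of the proof.
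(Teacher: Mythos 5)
Your overall strategy coincides with the paper's: exhibit an explicit admissible pair $(y,\varphi)$ with $y=x+\eps\, D\phi/|D\phi|$ and $\varphi$ built from $\phi$, and you correctly diagnose the one genuinely delicate point --- the infimum in $\RE$ runs over $z\in\mathcal{P}^+\subset B_R(y)$, a set of \emph{fixed} diameter, so a naive Taylor expansion leaves an error of size $T_P\cdot O(1)$ rather than the required $T_P\cdot o(1)$. But your proposal stops exactly there: you assert that the remedy ``must be'' a calibrated push-down of $\varphi$ outside a shrinking neighborhood, and that balancing this localization against the preservation of \eqref{eq:consis-red-geq2} ``is the crux,'' without constructing the modification or verifying that the two requirements are compatible. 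Since that unexecuted step is the actual content of the lemma, the proof as written has a genuine gap.

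For the record, the paper's resolution is concrete: take $\varphi(z)=\phi(t,z)-\alpha_\eps(z)$ with $\alpha_\eps\ge 0$ smooth, $\alpha_\eps\equiv 0$ on $B_\eps(x)$ (so $\alpha_\eps(y)=0$) and $\alpha_\eps\equiv\eps^{1/4}$ outside $B_{2\eps}(x)$. The exponent $1/4$ is what makes the trade-off close: since $T_P\le\eps^{1/2}$ always, any $z\in\mathcal{P}^+\setminus B_{2\eps}(x)$ satisfies $\phi(t+T_P,z)\ge\phi(t,y)+\eps^{1/4}+O(\eps^{1/2})>\phi(t+T_P,y)$, so the infimum localizes to $B_{2\eps}(x)$ and all continuity errors become $o(1)$; meanwhile $\alpha_\eps\ge 0$ with $\alpha_\eps(y)=0$ yields the one-sided inclusion $\{\varphi\ge\varphi(y)\}\subset\{\phi(t,\cdot)\ge\phi(t,y)\}$, so monotonicity \eqref{eq:monotonie++} together with \eqref{eq:usc-courbure}--\eqref{eq:lsc-courbure} gives \eqref{eq:consis-red-geq2} \emph{for the modified} $\varphi$ --- it is not obtained for your unmodified $\varphi=\phi(t_0,\cdot)$ and then hoped to survive the perturbation. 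Two further points you elide: the sign $\alpha_\eps\ge 0$ is used again in the final estimate (from $\varphi(z)\ge\varphi(y)$ one gets $\phi(t,z)\ge\phi(t,y)+\alpha_\eps(z)\ge\phi(t,y)$), and the conclusion needs a three-way case analysis on $T_P(y,\varphi)\in\{\eps^2,\ \eps/\k^*[y,\varphi]_+,\ \eps^{1/2}\}$; in the first and third regimes \eqref{eq:consis-red-geq2} forces $\k_*[x_0,\phi(t_0,\cdot)]_+ =0$, which is why the curvature term may be dropped there, whereas your conversion $\eps=T_P\,\k^*[y,\varphi](1+o(1))$ only covers the middle regime.
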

%-LM 2 - FMCM ---------------------------------------------------------------------
\begin{lem}[Estimate from above for $\RE$] \label{lem:cdr-2} Consider
  a $C^2$ function $\phi : (0,T] \times \R^N \to \R$ and $(t_0,x_0)
  \in (0,T) \times \R^N$. There exists a function $o (1)$ such that $o
  (1) \to 0$ as $(\eps, r) \to 0$ and such that for all $(t,x) \in B_r
  (t_0,x_0)$
\begin{itemize}
\item
either
\begin{equation}\label{eq:consis-dislo-c1}
\RE [\phi] (t,x) - \phi (t,x) \le \eps^2 (\partial_t \phi(t_0,x_0) + o (1) ) \, ;
\end{equation}
\item or $D\phi(t_0,x_0)= 0 $ and
\begin{equation}\label{consis-dislo-c1b}
\RE[\phi] (t,x)-\phi(t,x)\le T_P(y, \vp) (\p_t \phi(t_0,x_0) +o(1)) \end{equation} for some $(y, \vp) \in \mathcal{C}^+(x);$
\item or $D\phi(t_0,x_0) \neq 0$ and $\k^*[x_0,\phi(t_0,\cdot)] \ge 0$ and \begin{multline}\label{eq:consis-dislo-c2}
  \RE [\phi] (t,x) - \phi (t,x) \le  T_P (y,\varphi) (\partial_t \phi (t_0,x_0)
+ \kappa^*[x_0, \phi (t_0,\cdot)] |D\phi (t_0,x_0)| + o (1))
\end{multline}
for some $(y,\varphi) \in \mathcal{C}^+(x)$ such that $T_P (y,\varphi) = \min\( \eps / \k^* [y,\vp], \ep^{\hal}\)$
with\begin{equation}\label{328b}
0<\k^*[y,\vp] \le \k^*[x_0,\phi(t_0,\cdot)] + o(1).
\end{equation}
\end{itemize}
\end{lem}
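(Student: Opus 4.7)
The plan is to take $(y, \vp) \in \mathcal{C}^+(x)$ an $\eps^3$-optimal competitor for the supremum defining $\RE[\phi](t, x)$ and to upper-bound the inner infimum by evaluating at the test point $z = y \in \mathcal{P}^+(x, y, \vp)$, then to Taylor-expand $\phi$ around $(t_0, x_0)$ using $(t, x) \in B_r(t_0, x_0)$ and the $C^2$ regularity. A case analysis on the structure of $\mathcal{P}^+(x, y, \vp)$ and on the value of $D\phi(t_0, x_0)$ then produces the three alternatives of the statement.

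First, if $D\vp(y) = 0$ or $\k^*[y, \vp] \le 0$, then $\mathcal{P}^+(x, y, \vp) = \{x\}$ and $T_P(y, \vp) = \eps^2$, so the infimum equals $\phi(t + \eps^2, x)$ and Taylor expansion in time gives \eqref{eq:consis-dislo-c1}. Otherwise $T_P(y, \vp) = C_\eps(\eps/\k^*[y, \vp])$, and taking $z = y$ yields
\[
\phi(t + T_P, y) - \phi(t, x) = T_P\,\partial_t \phi(t, x) + D\phi(t, x) \cdot (y - x) + O(T_P^2 + \eps^2).
\]
When $D\phi(t_0, x_0) = 0$, the $C^2$ regularity gives $|D\phi(t, x)| = O(r)$ and therefore $D\phi(t, x) \cdot (y - x) = O(r\eps)$, which is absorbed into $T_P \cdot o(1)$ (since $T_P \ge \eps^{3/2}$ and $r$ is sent to zero after $\eps$); this produces \eqref{consis-dislo-c1b}.

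The final case $D\phi(t_0, x_0) \ne 0$ with $\k^*[x_0, \phi(t_0, \cdot)] \ge 0$ requires two properties of the near-optimal $(y, \vp)$: first, $T_P(y, \vp) \ne \eps^{3/2}$, which is automatic since $\k^*[y, \vp] \le \|K\|_{L^1}$ is uniformly bounded so that $\eps/\k^*[y, \vp] > \eps^{3/2}$ for $\eps$ small; second, the semi-continuity bound $\k^*[y, \vp] \le \k^*[x_0, \phi(t_0, \cdot)] + o(1)$. Once both are in hand, the identity $T_P\,\k^*[y, \vp] \le \eps$ combined with $|y - x| \le \eps$ yields $D\phi(t, x) \cdot (y - x) \le T_P\,\k^*[x_0, \phi(t_0, \cdot)]\,|D\phi(t_0, x_0)| + T_P \cdot o(1)$, which inserted in the Taylor expansion above proves \eqref{eq:consis-dislo-c2}.

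The main obstacle is the semi-continuity bound on the near-optimal $(y, \vp)$. My strategy is a comparison argument: introduce the admissible competitor $(y_0, \vp_0) = (x + \eps\, D\phi(t, x)/|D\phi(t, x)|, \phi(t, \cdot)) \in \mathcal{C}^+(x)$, whose integral curvature satisfies $\k^*[y_0, \phi(t, \cdot)] \le \k^*[x_0, \phi(t_0, \cdot)] + o(1)$ by the upper semi-continuity \eqref{eq:usc-courbure} of $\k^*$ together with the $C^2$ regularity of $\phi$. Computing Carol's infimum at $(y_0, \vp_0)$ explicitly (using that the minimum of $\phi(t + T_P, \cdot)$ on $\{\phi(t, z) \ge \phi(t, y_0)\}$ is attained at $z = y_0$ to leading order) produces a lower bound on $\RE[\phi](t, x)$ of the form $\phi(t, x) + T_P(y_0, \vp_0)(\partial_t \phi(t_0, x_0) + \k^*[x_0, \phi(t_0, \cdot)]\,|D\phi(t_0, x_0)| + o(1))$. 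If the near-optimal $\k^*[y, \vp]$ were strictly larger than $\k^*[x_0, \phi(t_0, \cdot)] + o(1)$, then $T_P(y, \vp) < T_P(y_0, \vp_0)$, and the monotonicity property \eqref{eq:monotonie++} of $\k^*$, applied to compare the extra mass of $\{\vp \ge \vp(y)\}$ with the level set $\{\phi(t, \cdot) \ge \phi(t, y_0)\}$, shows that Carol can depress Paul's value at $(y, \vp)$ below this lower bound, contradicting $\eps^3$-optimality. Tracking the $o(1)$ errors jointly in $(\eps, r)$ and handling the degenerate case $\k^*[x_0, \phi(t_0, \cdot)] = 0$ (where the claim forces the near-optimal $\k^*[y, \vp]$ itself to vanish as $\eps \to 0$) are the technical challenges.
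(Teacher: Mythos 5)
Your overall architecture is the paper's: split off the degenerate branch where $\mathcal{P}^+(x,y,\vp)=\{x\}$, otherwise take a near-optimal $(y,\vp)\in\mathcal{C}^+(x)$, bound the inner infimum by testing at $z=y$, Taylor-expand, and reduce everything to the curvature comparison \eqref{328b}. You correctly identify \eqref{328b} as the crux (it is the content of the paper's Lemma~\ref{lem:comparaison-courbure}), and your use of the uniform bound $\k^*[y,\vp]\le \|K\|_{L^1}$ to rule out $T_P=\eps^{3/2}$ and to get $T_P\ge c\eps$ is correct (note this lower bound, not $T_P\ge\eps^{3/2}$ together with ``$r$ after $\eps$'', is what absorbs $D\phi(t,x)\cdot(y-x)=O(r\eps)$ into $T_P\cdot o(1)$ in the second bullet).

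The gap is in your proof of \eqref{328b}. The chain ``$\k^*[y,\vp]$ too large $\Rightarrow$ $T_P(y,\vp)<T_P(y_0,\vp_0)$ $\Rightarrow$ Carol depresses the value below the lower bound computed at $(y_0,\vp_0)$'' does not close: comparing the two values through the time increments produces an inequality of the form $\bigl(T_P(y,\vp)-T_P(y_0,\vp_0)\bigr)\,\partial_t\phi(t_0,x_0)\ge o(\eps)$, which constrains $T_P(y,\vp)$ in the useful direction only when $\partial_t\phi(t_0,x_0)>0$ and gives nothing when $\partial_t\phi(t_0,x_0)\le 0$. The correct mechanism does not go through $T_P$ at all. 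Near-optimality must be exploited against Paul's trivial fallback $\phi(t+\eps^2,x)$ (this is \eqref{cx3} in the paper): it forces \emph{every} $z$ in Carol's set $\{\vp\ge\vp(y)\}\cap B_R(y)$ to satisfy $\phi(t+T_P,z)\ge\phi(t+\eps^2,x)-\eps^3$, hence $\phi(t_0,x_0+(z-y))\ge\phi(t_0,x_0)-o(1)$. This yields the set inclusion $\{\zeta: \vp(y+\zeta)\ge\vp(y)\}\subset\{\zeta:\phi(t_0,x_0+\zeta)\ge\phi(t_0,x_0)-o(1)\}$, and then the monotonicity \eqref{eq:monotonie++} gives \eqref{328b}, up to the error coming from the thin sliver $\{\phi(t_0,x_0)-o(1)\le\phi(t_0,x_0+\cdot)<\phi(t_0,x_0)\}$, whose $K$-mass vanishes by dominated convergence since $K\in L^1(B_R)$. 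If you insist on phrasing this as a contradiction from ``$\k^*[y,\vp]>\k^*[x_0,\phi(t_0,\cdot)]+\eta$'', you must argue that the excess set then carries $K$-mass at least $\eta/2$, hence (for fixed $\delta$ and $\eps,r$ small) contains a point $\zeta$ with $\phi(t_0,x_0+\zeta)<\phi(t_0,x_0)-\delta$, at which Carol beats the fallback by $\delta-O(\eps^{1/2}+r)$ --- a quantitative step your sketch defers but which is precisely where the lemma's difficulty lives.
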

%--------------------------------------------------------------------------------
By using the fact that $\Re[\phi]=- \RE[-\phi]$ and exchanging the roles of $+$ and $-$, we then can deduce from
the two previous lemmas the two following ones.
%- LM 1 bis - FMCM ---------------------------------------------------------------------
\begin{lem}[Estimate from above for $\Re$] \label{lem:cdr-1bis}
  Consider a $C^2$ function $\phi : (0,T] \times \R^N \to \R$ and
  $(t_0,x_0) \in (0,T) \times \R^N$.  There exists a function $o (1)$
  depending on $\phi$ and $(\eps,r)$ such that $o (1) \to 0$ as
  $(\eps,r)\to 0$ and such that for all $(t,x) \in B_r (t_0,x_0)$
  there exists $(y,\varphi) \in \mathcal{C}^- (x)$ such that
  \begin{itemize}
  \item  if $D\phi (t_0,x_0) \neq 0$,
\begin{equation}\label{eq:consis-red-geq1bis}
\Re [\phi](t,x) - \phi (t,x) \le T_C (y,\varphi)  \bigg(\partial_t \phi (t_0,x_0)
- \kappa^* [x_0, \phi (t_0,\cdot)]_-  | D \phi (t_0,x_0)| + o (1)\bigg)
\end{equation}
\begin{equation}\label{cdp2}
|D\vp(y)|\ge \hal |D \phi (t_0,x_0)|
\end{equation}
and
\begin{equation}\label{eq:consis-red-geq2bis}
\k_*[x_0,\phi (t_0,\cdot)] + o (1)\le  \k_* [y,\varphi] \le
\k^* [y, \vp]\le \k^* [x_0,\phi (t_0,\cdot)] + o (1);
\end{equation}
\item
if $D\phi (t_0,x_0) = 0$, then \eqref{eq:consis-red-geq1bis} still holds
true with the convention $$\kappa^* [x_0, \phi (t_0,\cdot)]_-  | D \phi (t_0,x_0)|=0$$
and $T_C (y,\varphi) = \eps^2$.
\end{itemize}\end{lem}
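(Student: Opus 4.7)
The plan is to obtain this lemma as a formal consequence of Lemma~\ref{lem:cdr-1} (the estimate from below for $\RE$) via the duality relation $\Re[\phi] = -\RE[-\phi]$ noted in \eqref{SR}. First I would apply Lemma~\ref{lem:cdr-1} to the $C^2$ function $\tilde \phi := -\phi$ at the same base point $(t_0,x_0)$. For any $(t,x) \in B_r(t_0,x_0)$ this produces a pair $(y,\tilde \varphi) \in \mathcal{C}^+(x)$ such that the analogues of \eqref{eq:consis-red-geq1}--\eqref{eq:consis-red-geq2} hold for $\tilde \phi$ and $\tilde \varphi$. Then I would set $\varphi := -\tilde \varphi$ and read off the conclusions for $\phi$ and $\varphi$ by translating all quantities through the sign change.

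The translation dictionary is the one recorded just after \eqref{SR}: $(y,\tilde\varphi) \in \mathcal{C}^+(x)$ is equivalent to $(y,\varphi) \in \mathcal{C}^-(x)$, $T_P(y,\tilde\varphi) = T_C(y,\varphi)$, and $\mathcal{P}^+(x,y,\tilde\varphi) = \mathcal{P}^-(x,y,\varphi)$. For the curvatures, the identities $\mathbf{1}_{\{-U \ge -U(x)\}} = \mathbf{1}_{\{U \le U(x)\}}$ and $\mathbf{1}_{\{-U > -U(x)\}} = \mathbf{1}_{\{U < U(x)\}}$ yield $\kappa_*[z,-U] = -\kappa^*[z,U]$ and $\kappa^*[z,-U] = -\kappa_*[z,U]$. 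Consequently, writing $a_+ = \max(0,a)$ and $a_- = \max(0,-a)$, we have $\kappa_*[x_0,-\phi]_+ = (-\kappa^*[x_0,\phi])_+ = \kappa^*[x_0,\phi]_-$. Gradients satisfy $|D\varphi|=|D\tilde\varphi|$ and $|D\phi|=|D\tilde\phi|$, so the gradient bound \eqref{cdp1} for $\tilde\varphi$ becomes \eqref{cdp2} for $\varphi$ verbatim.

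Next I would multiply the inequality \eqref{eq:consis-red-geq1} for $\tilde\phi$ by $-1$. The left-hand side becomes
\[
-\RE[-\phi](t,x) + (-\phi)(t,x) \text{ reversed} = \Re[\phi](t,x) - \phi(t,x),
\]
and the sign flip converts $\ge$ into $\le$ and turns $\partial_t \tilde\phi(t_0,x_0) + \kappa_*[x_0,\tilde\phi]_+ |D\tilde\phi|$ into $\partial_t\phi(t_0,x_0) - \kappa^*[x_0,\phi]_- |D\phi|$, producing \eqref{eq:consis-red-geq1bis}. The chain \eqref{eq:consis-red-geq2} for $\tilde\phi$, namely $\kappa_*[x_0,\tilde\phi]+o(1)\le \kappa_*[y,\tilde\varphi]\le \kappa^*[y,\tilde\varphi]\le \kappa^*[x_0,\tilde\phi]+o(1)$, translates via the sign rule above into the chain \eqref{eq:consis-red-geq2bis} for $\phi$ and $\varphi$ (multiplying by $-1$ reverses every inequality and swaps $\kappa^*$ with $\kappa_*$). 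The degenerate case $D\phi(t_0,x_0)=0$ is equivalent to $D\tilde\phi(t_0,x_0)=0$, and the identity $T_P(y,\tilde\varphi)=T_C(y,\varphi)$ transports the statement $T_P(y,\tilde\varphi)=\eps^2$ to $T_C(y,\varphi)=\eps^2$.

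No genuine analytic content is added beyond what Lemma~\ref{lem:cdr-1} already provides; the only place where I expect to have to be careful is bookkeeping the strict/non-strict inequalities in the definitions of $\kappa^*$ and $\kappa_*$ when passing through the sign change, and checking that the half-space condition $\tilde\varphi(y)\ge\tilde\varphi(x)$ defining $\mathcal{C}^+$ corresponds exactly to $\varphi(y)\le\varphi(x)$ defining $\mathcal{C}^-$, which it does by construction. Thus the proof reduces to this dualization with no further work.
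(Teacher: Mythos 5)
Your proposal is correct and is precisely the paper's own argument: the authors state that Lemmas~\ref{lem:cdr-1bis} and \ref{lem:cdr-2bis} follow from Lemmas~\ref{lem:cdr-1} and \ref{lem:cdr-2} via the identity $\Re[\phi]=-\RE[-\phi]$ and the exchange of the roles of $+$ and $-$, which is exactly the dualization you carry out. Your sign dictionary ($\kappa^*[z,-U]=-\kappa_*[z,U]$, $\kappa_*[z,-U]=-\kappa^*[z,U]$, $T_P(z,-\varphi)=T_C(z,\varphi)$, $\mathcal{P}^+(x,y,-\varphi)=\mathcal{P}^-(x,y,\varphi)$, $(-a)_+=a_-$) checks out against the definitions, so the translation of \eqref{eq:consis-red-geq1}--\eqref{eq:consis-red-geq2} into \eqref{eq:consis-red-geq1bis}--\eqref{eq:consis-red-geq2bis} is valid.
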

%- LM 2 bis - FMCM ---------------------------------------------------------------------
\begin{lem}[Estimate from below for $\Re $] \label{lem:cdr-2bis}
  Consider a $C^2$ function $\phi : (0,T] \times \R^N \to \R$ and
  $(t_0,x_0) \in (0,T) \times \R^N$.  There exists a function $o
  (1)$ such that $o (1) \to 0$ as $(\eps,r)\to 0$ and such that for
  all $(t,x) \in B_r (t_0,x_0)$
\begin{itemize}
\item
either
\begin{equation}\label{eq:consis-dislo-c1bis}
\Re [\phi] (t,x) - \phi (t,x) \ge \eps^2 (\partial_t \phi(t_0,x_0) + o (1)) ;
\end{equation}
\item
or $D\phi(t_0,x_0)=0$ and
\begin{equation}
\label{dislo-c1b}
\Re[\phi](t,x)-\phi(t,x) \ge T_C(y, \vp) (\p_t \phi(t_0,x_0)+o(1))\end{equation}
for some $(y,\vp) \in \mathcal{C}^-(x);$
\item
or $D\phi(t_0,x_0) \neq  0$ and $\k_*[x_0,\phi(t_0,\cdot)] \le 0$ and
\begin{multline}\label{eq:consis-dislo-c2bis}
\Re [\phi] (t,x) - \phi (t,x) \ge  T_C (y,\varphi) (\partial_t \phi (t_0,x_0)
+ \kappa_*[x_0, \phi (t_0,\cdot)] |D\phi (t_0,x_0)| + o(1))
\end{multline}
for some $(y,\varphi) \in \mathcal{C}^-(x)$ such that $T_C (y,\varphi) =  \min \(\eps / |\k_* [y,\vp]|, \ep^{\hal}\)$
with
$$
0>\k_*[y,\vp] \ge \k_*[x_0,\phi(t_0,\cdot)] + o(1).
$$
\end{itemize}
\end{lem}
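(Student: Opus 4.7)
The plan is to deduce this statement from Lemma~\ref{lem:cdr-2} by the duality already recorded in \eqref{SR}, namely $\Re [\phi] = -\RE [-\phi]$. Setting $\psi := -\phi$, one has
\[
\Re [\phi](t,x) - \phi(t,x) = -\bigl( \RE [\psi](t,x) - \psi(t,x) \bigr),
\]
so an \emph{upper} bound on $\RE [\psi] - \psi$ immediately yields a \emph{lower} bound on $\Re [\phi] - \phi$. The first step is therefore to apply Lemma~\ref{lem:cdr-2} to $\psi$ at $(t_0,x_0)$ and then to translate each of its three alternatives back into a statement about $\phi$.

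The translation uses the following elementary identities, which I would record at the beginning of the proof: $\partial_t \psi = -\partial_t \phi$; $D \psi(t_0,x_0) = -D\phi(t_0,x_0)$, so in particular $D\psi(t_0,x_0)\neq 0 \iff D\phi(t_0,x_0)\neq 0$; and, from the very definitions of $\k^*$ and $\k_*$,
\[
\k^*[x_0,\psi(t_0,\cdot)] = -\k_*[x_0,\phi(t_0,\cdot)], \qquad
\k_*[x_0,\psi(t_0,\cdot)] = -\k^*[x_0,\phi(t_0,\cdot)].
\]
In particular, the hypothesis $\k^*[x_0,\psi(t_0,\cdot)] \ge 0$ in the third case of Lemma~\ref{lem:cdr-2} becomes $\k_*[x_0,\phi(t_0,\cdot)] \le 0$, matching the hypothesis of the third case here. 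Next, if $(y,\varphi') \in \mathcal{C}^+(x)$ is the pair produced by Lemma~\ref{lem:cdr-2} applied to $\psi$, I would set $\varphi := -\varphi'$; then by the very definitions \eqref{defC}--\eqref{defP-} and the observations listed before \eqref{monotonieR}, $(y,\varphi) \in \mathcal{C}^-(x)$, and moreover $T_P(y,\varphi') = T_C(y,\varphi)$, $\k^*[y,\varphi'] = -\k_*[y,\varphi]$.

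With these dictionary items in place, each of the three conclusions of Lemma~\ref{lem:cdr-2} is negated and rewritten in terms of $\phi$. Case~1 of Lemma~\ref{lem:cdr-2} yields $-(\Re [\phi]-\phi) \le \eps^2(-\partial_t\phi(t_0,x_0) + o(1))$, which is exactly \eqref{eq:consis-dislo-c1bis}. Case~2 gives $-(\Re [\phi]-\phi) \le T_P(y,\varphi')(-\partial_t\phi(t_0,x_0) + o(1))$, and after the substitution $T_P(y,\varphi')=T_C(y,\varphi)$ one gets \eqref{dislo-c1b}. Case~3 gives
\[
-(\Re [\phi]-\phi) \le T_P(y,\varphi')\bigl(-\partial_t\phi(t_0,x_0) - \k_*[x_0,\phi(t_0,\cdot)]\,|D\phi(t_0,x_0)| + o(1)\bigr),
\]
which rearranges to \eqref{eq:consis-dislo-c2bis}; the bound $T_P(y,\varphi') = \min(\eps/\k^*[y,\varphi'],\eps^{1/2})$ together with $\k^*[y,\varphi']=-\k_*[y,\varphi]>0$ gives the required form $T_C(y,\varphi) = \min(\eps/|\k_*[y,\varphi]|,\eps^{1/2})$ and the sandwich $0 > \k_*[y,\varphi] \ge \k_*[x_0,\phi(t_0,\cdot)] + o(1)$.

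The step requiring the most care, and the main (very mild) obstacle, is checking that the correspondence $(y,\varphi') \leftrightarrow (y,-\varphi')$ really swaps $\mathcal{C}^+(x)$ with $\mathcal{C}^-(x)$ and $\mathcal{P}^+$ with $\mathcal{P}^-$ while identifying $T_P$ with $T_C$; this is exactly what is needed so that the "witness" produced by Lemma~\ref{lem:cdr-2} for $\psi$ becomes a legitimate witness for the sup/inf defining $\Re [\phi]$. Once this is done the proof is a sign-chasing exercise and no additional analysis is needed.
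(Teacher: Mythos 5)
Your proposal is correct and is precisely the paper's argument: the paper deduces Lemma~\ref{lem:cdr-2bis} from Lemma~\ref{lem:cdr-2} via the duality $\Re[\phi]=-\RE[-\phi]$ together with the sign-reversal identities $(y,\varphi)\in\mathcal{C}^+(x)\Leftrightarrow(y,-\varphi)\in\mathcal{C}^-(x)$, $T_P(y,\varphi)=T_C(y,-\varphi)$ and $\k^*[y,\varphi]=-\k_*[y,-\varphi]$, exactly as you describe. Your write-up in fact supplies more of the sign-chasing detail than the paper, which states the deduction in one line and leaves the verification to the reader.
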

%%%%%%%%%%%%%%%%%%%%%%%%%%%%%%%%%%%%%%%%%%%%%%%%%%%%%%%%%%%%%%%%%%%%%%%%%%%%%%%%%%%%%%%
The proofs of Lemmas~\ref{lem:cdr-1} and
\ref{lem:cdr-2} are postponed. We now explain how to
derive Proposition~\ref{lem:dislo-semi}.
%-------------------------------------------------------
\begin{proof}[Proof of Proposition~\ref{lem:dislo-semi}]
  We only prove that $\overline{u}$ is a sub-solution of
  \eqref{eq:dislo} since a symmetric argument can be used to prove that
  $\underline{u}$ is a super-solution.  In order to do so, we consider
  a $(t_0,x_0) \in (0,T) \times \R^N$ and a $\phi \in C^2$ such that
  $\underline{u} - \phi$
  attains a strict maximum at $(t_0,x_0)$ in $(0,T) \times B_{R+1} (x_0)$. We want to prove
  that $-\partial_t \phi (t_0,x_0) - \k^* [x_0,\phi (t_0,\cdot)] | D
  \phi (t_0,x_0)| \le 0$ if $D\phi (t_0,x_0) \neq 0$ and $- \p_t \phi(t_0,x_0) \le 0$ if $D\phi(t_0,x_0)=0$.

  We know that there exists a sequence $(t_{\eps_n},x_{\eps_n})$ such
  that $u^{\eps_n} - \phi$ attains a maximum in $(0,T) \times B_{R}
  (x_{\eps_n})$ at $(t_{\eps_n},x_{\eps_n})$.  For simplicity, we
  simply write $(t,x)$ for $(t_{\eps_n},x_{\eps_n})$ and $\eps$ for
  $\eps_n$. With the same argument as in the proof of Propositions
  \ref{pro:pide} and \ref{lem:eik-sup}, the dynamic programming
  principle \eqref{pdp:dislo-bis} and the monotonicity of $\Sc_\ep$
  imply that
\begin{equation}
\label{posi}
\phi (t,x) \le \Sc^\eps [\phi] (t,x).
\end{equation}
We now estimate $\Sc^\eps [\phi] (t,x)$ from above. We distinguish cases.

\paragraph{Case $D\phi(t_0,x_0) \neq 0$ and $\kappa^* [x_0, \phi(t_0,
  \cdot)]>0$.} Lemma~\ref{lem:cdr-1bis} yields an $(y,\vp) \in
\mathcal{C}^-(x)$, with $\k_*[y,\vp]>0$ (by
\eqref{eq:consis-red-geq2bis}) for $\ep$ small enough, thus
$T_C(y,\vp)=\ep^2$, so that
  $$\Re[\phi](t,x)-\phi(t,x)\le \ep^2 \( \p_t \phi(t_0,x_0)+o(1)\)$$ and
  using properties \eqref{monotonieR}--\eqref{rpc}, we find
\begin{eqnarray*}
\Sc^\eps [\phi] (t,x) & = & \RE [\Re [\phi]](t,x) \\
& \le & \RE[\phi] (t,x) + \eps^2 (\partial_t \phi (t_0,x_0) + o(1)).
\end{eqnarray*}
We  now use Lemma~\ref{lem:cdr-2}. If \eqref{eq:consis-dislo-c1}
holds true, then (with \eqref{posi})
$$
0 \le \Sc^\eps [\phi] (t,x) - \phi (t,x) \le 2\eps^2 (\partial_t \phi(t_0,x_0)+o(1))
$$
and we conclude that $\partial_t \phi(t_0,x_0) \ge 0$. The result follows
easily in this subcase. The subcase where \eqref{consis-dislo-c1b} holds  works similarly.

If now \eqref{eq:consis-dislo-c2} holds true, we get
\begin{multline*}
0 \le\Sc_\eps [\phi] (t,x) - \phi (t,x) \le T_P (y,\vp) (\partial_t
\phi (t_0,x_0) + \k^* [x_0,\phi (t_0,\cdot)] |D \phi (t_0,x_0)|+ o (1))\\
+ \ep^2 \( \p_t \phi(t_0,x_0)+o(1)\)
.\end{multline*} Since $T_P(y,\vp)=\min\( \ep/\k_*[y,\vp], \ep^{\hal}\)$, this can be written as
$$0 \le\Sc_\eps [\phi] (t,x) - \phi (t,x) \le T_P(y,\vp) (\partial_t
\phi (t_0,x_0) + \k^* [x_0,\phi (t_0,\cdot)] |D \phi (t_0,x_0)|+ o (1))$$
and dividing by $T_P(y,\vp)$ and letting $\ep \to 0$, the desired inequality follows.

\paragraph{Case  $D\phi(t_0,x_0) \neq 0$ and $\kappa^* [x_0, \phi(t_0, \cdot)]< 0$.}
We apply first Lemma \ref{lem:cdr-1bis} and find
\begin{equation*}
 \Re [\phi](t,x) - \phi (t,x) \\\le
T_C (y,\varphi) (\partial_t \phi (t_0,x_0)+ \k^*[x_0,\phi(t_0,x_0)] |D\phi(t_0,x_0)|
+ o(1)).
\end{equation*}
We note that from \eqref{cdp2} we have $D\vp(y) \neq 0$.
Now we cannot have $\k_*[y,\varphi] \ge 0$, otherwise a
contradiction would follow from \eqref{eq:consis-red-geq2bis} and our
assumption $\k^* [x_0, \phi(t_0, \cdot)]< 0$. We deduce that the case
$T_C(y, \vp)= \ep^2 $ cannot happen and we must have $\k_*
[y,\varphi]<0$ and $ \eps^2 = o(T_C(y,\varphi)).  $
With this piece of information at hand, we can write, as previously
\begin{multline*}
0 \le \Sc_\eps [\phi](t,x) - \phi (t,x) \le \RE [\phi](t,x) - \phi(t,x)
\\+ T_C (y, \varphi) (\partial_t \phi (t_0,x_0)
+ \k^*[x_0,\phi(t_0,\cdot)]|D\phi(t_0,x_0)| + o (1)).
\end{multline*}
On the other hand, Lemma \ref{lem:cdr-2} yields (we can only be in the
first situation of the lemma) $$\RE[\phi](t,x) \le \phi (t,x) +
O(\eps^2)= \phi(t,x)+o(T_C(y,\vp)),$$ and we can write, dividing by
$T_C (y, \varphi)$,
$$
0 \le \partial_t \phi (t_0,x_0)
+ \k^*[x_0,\phi(t_0,\cdot)]|D\phi(t_0,x_0)| + o(1)
$$
and the desired inequality is thus obtained in this case too.

\paragraph{Case $D\phi(t_0,x_0) \neq 0$ and $\kappa^* [x_0, \phi(t_0, \cdot)]=0$.}
Once again, we  first apply Lemma~\ref{lem:cdr-1bis} and we obtain
$$
0 \le \Sc_\eps [\phi](t,x) - \phi (t,x) \le\RE [\phi](t,x) - \phi(t,x)
+ T_C (y,\varphi) (\partial_t \phi (t_0,x_0)
+ o(1)).
$$
Lemma~\ref{lem:cdr-2} implies that
$$
\RE[\phi](t,x) - \phi (t,x) \le M_\eps (\partial_t \phi(t_0,x_0)+o(1))
$$
with $M_\eps =\eps^2$ or $M_\eps = T_P(y, \vp)$. Hence we obtain
$$
0 \le (T_C (y,\varphi) + M_\eps) (\partial_t \phi (t_0,x_0) + o(1))
$$
and we obtain the desired inequality in this case too.

\paragraph{Case $D\phi(t_0,x_0) = 0$.} Then Lemmas~\ref{lem:cdr-1bis} and \ref{lem:cdr-2}
yield
$$
0 \le (\eps^2+T_P(y, \vp)) (\p_t \phi(t_0,x_0) + o(1))
$$
or $0 \le 2\ep^2(\p_t \phi(t_0,x_0) + o(1))$,
and the proof of the proposition is now complete.
\end{proof}
%-----------
We now turn to the proofs of Lemmas~\ref{lem:cdr-1}
and \ref{lem:cdr-2}. As the reader shall see, we
follow along the lines of proofs of Lemmas~\ref{lem:consis-eik} and
\ref{lem:consis-eik2} used in the eikonal case.
%---------------------------------------------------------------------------------------------------
\begin{proof}[Proof of Lemma~\ref{lem:cdr-1}]
We first assume that $D\phi(t_0,x_0) \neq 0$. So we can assume that,
for
$\eps$ small enough, $|D\phi (t,x)| \ge \theta_0 >0$.

Consider $y = x + \eps \frac{D\phi (t,x)}{|D\phi (t,x)|}$ and
$\vp (z) = \phi (t,z) - \alpha_\eps (z)$ with $\alpha_\eps : \R^N
\to [0,+\infty)$ smooth and
$$
\alpha_\eps (z) = \left\{ \begin{array}{ll} 0 & \text{ if } |z-x| \le \eps \\
\eps^{\frac14} & \text{ if } |z-x| \ge 2 \eps .
\end{array} \right.
$$
We also can write for $\eps$ small enough
$$
\varphi (y) = \phi (t,y) = \phi (t,x) + \eps |D \phi (t,x)| + O (\eps^2)
\ge \phi (t,x) = \varphi (x)
$$
(we used the fact that $|D\phi(t,x)| \ge \theta_0>0$).
This means that
$(y,\varphi) \in \mathcal{C}^+ (x)$ (at least for $\eps$
small enough). Remark also that \eqref{cdp1} holds. Hence
$$
\RE[\phi](t,x) \ge \inf_{z \in \mathcal{P}^+
(x,y,\vp)} \phi (t+ T_P (y,\vp), z)  .
$$
Since $(t,y )=(t_0,x_0) + o_\eps (1)$ and $\varphi \le
\phi(t,\cdot)$, it follows that
$$
\{ z|\vp( z) \ge \varphi (y)=
\phi(t, y) \}\subset\{ z|\phi(t,z)\ge \phi(t, y) \}
$$
and, using the monotonicity of $\k^*$ and its upper
semi-continuity (see \eqref{eq:monotonie++} and
\eqref{eq:usc-courbure}),  we conclude that
\begin{equation}\label{cx1}
\k^*[y, \varphi] \le \k^*[y, \phi(t,\cdot)]
\le \k^*[x_0, \phi(t_0,\cdot)] + o_\eps (1)
\end{equation}
for $\eps$ small enough; the other part of \eqref{eq:consis-red-geq2}
follows similarly by lower semi-continuity of $\k_*$
\eqref{eq:lsc-courbure}.  \eqref{eq:consis-red-geq2} is thus proved.
Moreover \eqref{cx1} implies $\k^*[y, \vp] <
\frac{1}{\sqrt{\ep}}$ hence either $T_P (y,{\vp})=\eps^2$
or $T_P(y,{\vp}) =\frac{\eps}{\k^*[y,\varphi]_+} $
or $T_P(y,{\vp}) = \eps^{\frac12}$. We treat these cases
separately.

\paragraph{Case $T_P(y,{\vp})=\eps^2$.} We know that in this case
we have $\k^* [y,{\vp}] \le 0$ and
$\mathcal{P}^+(x,y,{\vp})=\{x\}$. Hence,
\begin{eqnarray*}
\RE[\phi](t,x) - \phi (t,x) &\ge& \phi (t+\eps^2,x) - \phi(t,x) + O (\eps^3)  \\
& \ge & \eps^2( \partial_t \phi (t,x) + o_\eps (1)).
\end{eqnarray*} Since $\k^* [y,{\vp}] \le 0$ we deduce from \eqref{eq:consis-red-geq2} $
\k_*[x_0,\phi(t_0,\cdot)] \le 0.
$
hence \eqref{eq:consis-red-geq1} is proved in this case.
\medskip

In the two remaining cases, we have $T_P(y,{\vp}) \le \eps^{\frac12}$ and
$\mathcal{P}^+ (x,y,{\vp} ) \ni y$. These two facts imply the following
inequality
\begin{equation}\label{eq:localise}
 \RE[\phi](t,x) \ge \inf_{z \in \mathcal{P}^+ (x,y,\varphi) \cap
  B_{2\ep}(x)} \phi (t+ T_P (y,{\vp}), z).
\end{equation} i.e. the fact that the infimum can only be achieved in $B_{2\ep}(x)$.
To see this, we simply write for $z \in \mathcal{P}^+
(x,y,\varphi)$ such that $z \notin B_{2\ep}(x)$,
\begin{eqnarray*}
 \phi (t+ T_P (y,{\vp}), z) &=& \phi (t,z) + O (T_P(y,{\vp})) \\
& \ge & \phi (t,y) + \eps^{\frac14} + O (\eps^{\frac12}) \\
& \ge & \phi (t+ T_P (y,{\vp}),y)
+ \eps^{\frac14} %+ O(\eps)
+ O (\eps^{\frac12}) \\
& > & \inf_{z \in \mathcal{P}^+ (x,y,\varphi) \cap B_{2\ep}(x)}
\phi (t+ T_P (y,{\vp}), z)
\end{eqnarray*}
and \eqref{eq:localise} follows.

\paragraph{Case $T_P(y,{\vp}) = \eps^{\frac12}$.} By definition of $C_\ep$, this   happens if  $\k^*[y,\varphi] \le \eps^{\frac12}$ and from \eqref{eq:consis-red-geq2} it follows that
$
\k_*[x_0, \phi(t_0,\cdot)] \le 0.
$
 For $z \in
\mathcal{P}^+ (x,y,\varphi) \cap B_{2\ep}(x)$, we have
$$
\phi (t+ T_P (y,{\vp}), z) - \phi (t,x) \ge \phi (t+\eps^{\frac12},z)- \phi (t,z) +
\phi (t,z) - \phi(t,x) + O (\eps^3)$$
But $\vp(z) \ge \vp(y) $ since $z \in \mathcal{P}^+(x,y,\vp)$, hence
$\phi(t,z) - \alpha_\ep(z) \ge \phi(t,y) $ so  replacing in the above and using $\alpha_\ep \ge 0$ we are led to
\begin{eqnarray*}
\phi (t+ T_P (y,{\vp}), z) - \phi (t,x) & \ge &
\phi (t+\eps^{\frac12},z)- \phi (t,z) +
\phi (t,y) - \phi(t,x) + O (\eps^3)
\\
& \ge & \eps^{\frac12} \partial_t \phi (t,x)   + \eps |D \phi (t,x)| + o_\eps (\ep) \\
& \ge & \eps^{\frac12} (\partial_t \phi (t,x)  + o_\eps (1)) \\
& \ge & \eps^{\frac12} (\partial_t \phi (t,x) + \k_*[x_0,\phi(t_0,\cdot)]_+ |D \phi (t_0,x_0)| + o_\eps (1))
\end{eqnarray*}
and we get \eqref{eq:consis-red-geq1} in this case too.

\paragraph{Case $T_P(y,{\vp})=\frac{\eps}{\k^*[y, \varphi]_+} $.}
 Observe that from \eqref{cx1}, $\k^*(y,\vp)$ is bounded above hence $T_P(y,\vp)$ bounded below by $c\ep$.  As above, we may  write, recalling the choice of $y= x+ \ep \frac{D\phi(t,x)}{|D\phi(t,x)|}$
\begin{eqnarray*}
  \RE [\phi](t,x) - \phi (t,x) & \ge
  & \inf_{z \in \mathcal{P}^+ (x,y,\varphi) \cap B_{2\ep}(x)}
\phi (t+ T_P (y,{\vp}), z) - \phi (t,x)\\
  & \ge & \inf_{z \in \mathcal{P}^+ (x,y,\varphi) \cap B_{2\ep}(x)}
\phi (t+ T_P (y,{\vp}), z) - \phi (t,z)
  + \phi (t,z) - \phi (t,x)\\
  & \ge &  T_P (y,\vp) (\partial_t \phi (t,x)
  + o_\eps (1)) + \phi (t,y) - \phi (t,x) \\
  & \ge &  T_P (y,\vp) (\partial_t \phi (t,x)
  + o_\eps (1)) + \eps |D \phi (t,x)| + O (\eps^2) \\
  & \ge &  T_P (y,\vp) \bigg( \partial_t \phi (t,x)
+ \frac\eps{T_P (y,\vp) }|D\phi (t,x)|
 +o_\eps (1) \bigg) \\
 & \ge& T_P (y,\vp) \bigg( \partial_t \phi (t,x)  +  \k^* [y,\varphi]_+
 |D\phi (t,x)| +o_\eps (1) \bigg) \\
& \ge & T_P (y,\vp) \bigg( \partial_t \phi (t,x)  +  \k_* [x_0,\phi(t_0,\cdot)]_+
 |D\phi (t_0,x_0)| +o_\eps (1) \bigg)
\end{eqnarray*} where the last inequality follows from \eqref{eq:consis-red-geq2} ;
and we get \eqref{eq:consis-red-geq1} in all cases.
\medskip

Assume now that $D\phi (t_0,x_0) = 0$. Then choose $y=x$
and $\varphi (z)= -\alpha_\eps (z)$. This is admissible and $D\varphi(y) =0$ and $T_P(y, \vp)=\ep^2$ and the conclusion follows easily.
\end{proof}
We now turn to the proof of Lemma~\ref{lem:cdr-2}.
%-------------------------------------------------------------------------
\begin{proof}[Proof of Lemma~\ref{lem:cdr-2}]
We recall that
\begin{equation}\label{rs1}
\RE[\phi](t,x) =  \sup_{(y,\vp) \in \mathcal{C}^+ (x)} \inf_{z \in \mathcal{P}^+(x,y,\vp) }  \phi( t+ T_P(y,\vp),
z)
\end{equation}
In view of the definition of  $\mathcal{C}^+(x)$ and $\mathcal{P}^+ (x,y,\vp)$,
we can write more precisely
\begin{multline}\label{rs0}
\RE[\phi](t,x) \\= \sup_{\vp \in C^2 (\R^N)} \max \bigg(
  \sup_{\stackrel{y \in B_\eps (x) :\vp (y) \ge \vp(x)}{ D \vp (y) \neq 0, \  \k^* [y,\vp] >0, }}
\inf_{z \in \mathcal{P}^+(x,y,\vp) } \phi( t + T_P (y,\vp),z) , \phi (t+\eps^2,x) \bigg).
\end{multline}
Let $\vp$ be a fixed $C^2$ test function.

\paragraph{Case 1.} Assume first that  the max above is
$  \phi (t+\eps^2,x)$.
Then we easily obtain \eqref{eq:consis-dislo-c1} as desired.

\paragraph{Case 2.}
We then turn to the situation where
\begin{equation}\label{eq:avant-cs}
 \sup_{\stackrel{y \in B_\eps (x):\vp (y) \ge \vp(x)}{D \vp (y) \neq 0, \k^* [y,\vp] >0}}
\inf_{z \in \mathcal{P}^+(x,y,\vp) }  \phi( t + T_P (y,\vp),z)  > \phi (t+\eps^2,x)  .
\end{equation}

We need to prove that \eqref{consis-dislo-c1b} or  \eqref{eq:consis-dislo-c2} holds true in this
case. So let $y \in B_\ep(x)$ be such that $\vp(y) \ge \vp(x) $,
$D\vp(y) \neq 0$ and $\k^*[y,\vp] >0$,  and such that
\begin{equation}\label{cx3}
  \inf_{z \in \mathcal{P}^+(x,y,\vp) }  \phi( t + T_P (y,\vp),z)  > \phi (t+\eps^2,x) - \ep^3.\end{equation}

For any $z\in \mathcal{P}^+(x,y,\vp) = \{ z \in B_R(y) : \vp (z) \ge
\vp (y)\}$ we may write
\begin{eqnarray*}
\phi(t^+, z)-\phi (t,x) &=& \phi(t^+, z)- \phi(t^+, y)
+ \phi (t^+,y) - \phi (t,x)\\
& =& \phi(t^+, z)- \phi(t^+, y) \\
&& + T_P(y,\vp)(\p_t \phi(t_0,x_0) + o_\eps (1))+ D\phi(t^+,y) \cdot (y-x) +O(\ep^2),
\end{eqnarray*}
and using the fact that $|y-x|\le \ep$ and $O(\eps^2)= o (t^+-t)$
since $T_P(y,\vp) \ge \ep^{3/2}$ in this case; we obtain
\begin{multline} \label{rs2} \phi(t^+, z)- \phi(t,x)\le \phi(t^+, z)-
  \phi(t^+,y)+ \ep |D\phi(t^+,y)|\\+ T_P(y, \vp) (\p_t
  \phi(t_0,x_0)+o_\eps (1)) .
\end{multline}
We now evaluate $\phi(t^+, z)- \phi(t^+, y)$. In view of \eqref{rs0},
\eqref{eq:avant-cs} and \eqref{rs2}, the following lemma permits to conclude.
%------------------------------------------------------------------------------------
\begin{lem}\label{lclaim} For any $(y,\vp) \in \mathcal{C}^+(x)$ with
  $D\vp(y)\neq 0$, $\k^*[y,\vp]>0$, such that \eqref{cx3} holds, we
  have
\begin{multline}\label{rs3}
\inf_{z \in \mathcal{P}^+(x,y,\vp) } \phi (t^+,z) - \phi (t^+,y) \le
T_P (y,\varphi) (\k^* [x_0,\phi (t_0,\cdot)]_+ |D\phi (t_0,x_0)| + o_\eps (1) ) \\
-\eps | D \phi (t^+,y)| .
\end{multline}
Moreover, if $D\phi(t_0,x_0)\neq 0$, we have
$$
0 <\k^*[y,\vp]\le \k^*[x_0, \phi(t_0, \cdot)] \text{ and } T_P(y,\vp)= \min \( \ep/\k^*(y,\vp), \ep^{\hal}\).
$$

\end{lem}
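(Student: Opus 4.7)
The plan is to prove the two assertions of Lemma~\ref{lclaim} in the order: first the moreover statement (which controls $\k^*[y, \vp]$ and identifies $T_P(y, \vp)$), and then the main inequality \eqref{rs3} by exhibiting an explicit near-minimizer $z^* \in \mathcal{P}^+(x, y, \vp)$.

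For the moreover part, condition \eqref{cx3} gives $\phi(t^+, z) > \phi(t+\eps^2, x) - \eps^3$ for every $z \in \mathcal{P}^+(x, y, \vp) = \{\vp \ge \vp(y)\} \cap B_R(y)$. Since $(t, x) \in B_r(t_0, x_0)$ and $t^+ - t \le \eps^{1/2}$, Taylor-expanding $\phi$ in both variables yields $\phi(t_0, z) \ge \phi(t_0, y) - o_\eps(1)$ on $\{\vp \ge \vp(y)\} \cap B_R(y)$. Since $D\phi(t_0, x_0) \neq 0$, the level set $\{\phi(t_0, \cdot) = \phi(t_0, y)\}$ is locally a regular hypersurface and the thin strip $\{\phi(t_0, y) - o_\eps(1) \le \phi(t_0, \cdot) < \phi(t_0, y)\}$ has $K$-measure $o_\eps(1)$ in $B_R(y)$, thanks to the integrability conditions in \eqref{cond:N} (in particular the paraboloid condition). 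The monotonicity \eqref{eq:monotonie++} and upper semicontinuity \eqref{eq:usc-courbure} of $\k^*$ then yield $0 < \k^*[y, \vp] \le \k^*[x_0, \phi(t_0, \cdot)] + o_\eps(1)$, which by the definition of $C_\eps$ fixes $T_P(y, \vp) = \min(\eps/\k^*[y, \vp], \eps^{1/2})$.

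For \eqref{rs3}, I would construct $z^* \in \mathcal{P}^+(x, y, \vp)$ explicitly. The case $D\phi(t^+, y) = 0$ is immediate with $z^* = y$. Otherwise, set $v = D\phi(t^+, y)/|D\phi(t^+, y)|$ and consider $z^* = y - \eps v + \eta$ for a small correction $\eta$. When $D\vp(y) \cdot v \le 0$, take $\eta$ of size $O(\eps^{3/2})$ proportional to $D\vp(y)$: Taylor expansion shows $\vp(z^*) \ge \vp(y)$ and $\phi(t^+, z^*) - \phi(t^+, y) = -\eps |D\phi(t^+, y)| + O(\eps^{3/2})$; since $T_P(y, \vp) \ge \eps^{3/2}$, this error is absorbed into $T_P \cdot o_\eps(1)$ and gives \eqref{rs3}.

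The main obstacle is the case $D\vp(y) \cdot v > 0$: the linear candidate $y - \eps v$ falls strictly outside $\{\vp \ge \vp(y)\}$, so direct first-order descent is blocked. Here the curvature information becomes essential. Since $0 < \k^*[y, \vp] \le \k^*[x_0, \phi(t_0, \cdot)] + o_\eps(1)$, the super-level set $\{\vp \ge \vp(y)\}$ must extend beyond its tangent half-space into $\{D\phi(t^+, y) \cdot (z - y) < 0\}$ at a scale controlled by $T_P(y, \vp) \cdot \k^*[y, \vp] \le \eps$, by the integral definition of $\k^*$ applied to a second-order expansion of $\vp$ around $y$. Selecting $z^*$ in this extension and Taylor-expanding $\phi$ yields \eqref{rs3} with the curvature correction $T_P(y, \vp) \cdot \k^*[x_0, \phi(t_0, \cdot)]_+ |D\phi(t_0, x_0)|$ precisely absorbing the deficit. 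Making this quantitative displacement estimate rigorous and uniform in $\vp$, relying only on the regularity conditions \eqref{cond:N} and the $C^2$ structure of $\vp$ and $\phi$, is the main technical work I anticipate.
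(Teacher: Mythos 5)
Your handling of the \emph{moreover} part follows the paper's route (the paper deduces from \eqref{cx3} the inclusion of $\{\vp(y+\cdot)\ge \vp(y)\}$ into $\{\phi(t_0,x_0+\cdot)\ge\phi(t_0,x_0)\}$ and concludes by monotonicity \eqref{eq:monotonie++} and upper semi-continuity \eqref{eq:usc-courbure}), and your identification of $T_P(y,\vp)$ is correct. The gap is in \eqref{rs3}, and it is twofold. Even your ``easy'' case $D\vp(y)\cdot v\le 0$ is not closed: for $z^*=y-\ep v+\eta$ with $|\eta|=O(\ep^{3/2})$, the claim $\vp(z^*)\ge\vp(y)$ requires the first-order gain $c\,\ep^{3/2}|D\vp(y)|$ to dominate the second-order error $O(\|D^2\vp\|\,\ep^2)$, but $\vp$ ranges over all of $C^2(\R^N)$ with no bound on $D^2\vp$ (and $|D\vp(y)|$ may be arbitrarily small), so this is exactly the non-uniformity you flag and it is fatal, not cosmetic. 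More seriously, the mechanism you propose for the blocked case $D\vp(y)\cdot v>0$ does not exist: $\k^*[y,\vp]$ is a nonlocal quantity, and the bounds $0<\k^*[y,\vp]\le \k^*[x_0,\phi(t_0,\cdot)]+o(1)$ give no quantitative lower bound on how far $\{\vp\ge\vp(y)\}$ protrudes past its tangent half-space into $\{D\phi(t^+,y)\cdot(z-y)<0\}$ near $y$; the set can coincide with the half-space $\{D\vp(y)\cdot(z-y)\ge 0\}$ in a whole ball around $y$ and acquire its positive integral curvature from far away. So the ``main technical work'' you anticipate cannot be carried out as described.

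The step you are missing is that no descent point is needed: the admissible choice $z=y$ already proves \eqref{rs3}. Indeed $y\in\mathcal{P}^+(x,y,\vp)$, so the left-hand side of \eqref{rs3} is $\le 0$, while the negative term $-\ep|D\phi(t^+,y)|$ on the right is absorbed by $T_P(y,\vp)\,\k^*[x_0,\phi(t_0,\cdot)]_+\,|D\phi(t_0,x_0)|$. Precisely, by the first part of the lemma $0<\k^*[y,\vp]\le \k^*[x_0,\phi(t_0,\cdot)]=\k^*[x_0,\phi(t_0,\cdot)]_+$, and either $T_P=\ep/\k^*[y,\vp]$, in which case $T_P\,\k^*[x_0,\phi(t_0,\cdot)]_+|D\phi(t_0,x_0)|\ge \ep|D\phi(t_0,x_0)|=\ep|D\phi(t^+,y)|+T_P\,o(1)$, or $T_P=\ep^{1/2}$ with $\k^*[y,\vp]\le\ep^{1/2}$, in which case $\ep|D\phi(t^+,y)|=T_P\cdot\ep^{1/2}|D\phi(t^+,y)|=T_P\,o(1)$. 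Either way the right-hand side of \eqref{rs3} is bounded below by $T_P\,o(1)$ for a suitable $o(1)$, hence dominates the left-hand side. (The paper runs this as a contradiction argument: assuming \eqref{claim:depart} for all admissible $z$, inequality \eqref{cx2} turns it into \eqref{338bis}, and evaluating at $z=y_n$ gives $0\ge T_P\eta/2$.) In short, your approach aims at a strictly stronger pointwise decrease of $\phi$ which is neither achievable uniformly in $\vp$ nor required, because the time-step $T_P$ is calibrated so that $T_P\,\k^*$ compensates the displacement $\ep$.
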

%------------------------------------------------------------------------------------

There now remains to give the proof of Lemma~\ref{lclaim}.
We start by
%-------------
 \begin{lem}\label{lem:comparaison-courbure}
If  $D\phi(t_0,x_0) \neq 0$, then for any $(y,\vp)$  as in the above lemma, we have
$$
0 <   \k^*[y,\vp] \le \k^* [x_0, \phi(t_0 , \cdot)].
$$
\end{lem}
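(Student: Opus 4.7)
The plan is to use the $\ep^3$-optimality \eqref{cx3} to include the super-level set $\{z : \vp(z) \ge \vp(y)\}$ inside a slightly enlarged super-level set of $\phi(t,\cdot)$ at level $\phi(t,y)$, and then to exploit the monotonicity \eqref{eq:monotonie++} of $\k^*$ together with the fine decay of the kernel $K$ from \eqref{cond:N} to compare $\k^*[y,\vp]$ with $\k^*[y,\phi(t,\cdot)]$. A final passage to the limit via the upper semi-continuity \eqref{eq:usc-courbure} will yield the comparison with $\k^*[x_0,\phi(t_0,\cdot)]$. Since the positivity $\k^*[y,\vp]>0$ is part of the hypothesis, only the upper bound is to be established; in keeping with the convention already used in \eqref{eq:consis-red-geq2}, I read it asymptotically as $\k^*[y,\vp] \le \k^*[x_0,\phi(t_0,\cdot)] + o(1)$ as $(\ep,r) \to 0$.

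I would first observe that for any $z \in \mathcal{P}^+(x,y,\vp) \subset B_R(y)$, inequality \eqref{cx3} combined with a Taylor expansion of $\phi$ in time (using $t^+ - t = T_P(y,\vp) \le \ep^{1/2}$) and in space (using $|y-x|\le \ep$) gives
$$\phi(t,z) \ge \phi(t,y) - \delta_\ep, \qquad \delta_\ep := C\bigl(T_P(y,\vp) + \ep\bigr) \to 0,$$
hence the inclusion $\mathcal{P}^+(x,y,\vp) \subset \{z\in B_R(y) : \phi(t,z) \ge \phi(t,y)-\delta_\ep\}$. Because $K$ is supported in $B_R(0)$, this inclusion together with its dual for strict sub-level sets gives directly from the definition of $\k^*$
$$\k^*[y,\vp] \le \k^*[y,\phi(t,\cdot)] + 2\int_{L_\ep} K(y-z)\,dz, \qquad L_\ep := \{z : \phi(t,y)-\delta_\ep \le \phi(t,z) < \phi(t,y)\},$$
where $L_\ep$ is a thin layer between two nearby level surfaces of $\phi(t,\cdot)$.

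Next, since $|D\phi(t,y)|$ is bounded below by $\hal |D\phi(t_0,x_0)|>0$, a second-order Taylor expansion at $y$ (writing $e := D\phi(t,y)/|D\phi(t,y)|$) shows that $L_\ep$ is contained in the union of a flat slab $\{z : |(z-y)\cdot e|\le C\delta_\ep\}$ and a fixed-opening paraboloid $y + \mathcal{Q}(r_0, e)$ with $r_0$ independent of $\ep$. The three decay properties of $K$ in \eqref{cond:N} --- $\int_{B_\gamma}K = o(1/\gamma)$, $K \in W^{1,1}$ away from the origin, and $\int_{\mathcal{Q}(r,e)}K = o(1/r)$ --- can then be used on complementary regions to conclude $\int_{L_\ep}K(y-z)\,dz = o(1)$. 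The argument is then completed by applying the upper semi-continuity \eqref{eq:usc-courbure} of $\k^*[\cdot,\phi(t_0,\cdot)]$ together with the $C^2$-continuity of $\phi$ in $(t,\cdot)$, which lets us pass from $\k^*[y,\phi(t,\cdot)]$ to $\k^*[x_0,\phi(t_0,\cdot)]$ at the cost of an additional $o(1)$.

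The main obstacle is the smallness of the thin-layer integral $\int_{L_\ep} K(y-z)\,dz$: this is precisely where the paraboloid refinement of \eqref{cond:N} is needed, because for the singular kernels of interest neither $\int_{B_\gamma}K$ nor $\int_{B_R\setminus B_\gamma}K$ is uniformly bounded as $\gamma \to 0$. Splitting $L_\ep$ into its flat slab part (close to the tangent hyperplane at $y$ to the level surface, where the $o(1/\gamma)$ singular-origin control is crucial) and its quadratic ``cap'' (where the paraboloid condition $\int_{\mathcal{Q}(r,e)}K = o(1/r)$ is crucial) in a way that makes the two error terms compatible is the main technical point of the proof.
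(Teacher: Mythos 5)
Your route is genuinely different from the paper's, and it has a gap at its central step, namely the claim that $\int_{L_\ep}K(y-z)\,dz=o(1)$. The containment of $L_\ep$ in the union of a thin slab and a fixed paraboloid rests on a Taylor expansion of $\phi(t,\cdot)$ at $y$, which only controls $\phi(t,z)$ for $z$ in a fixed neighbourhood of $y$ where $|D\phi(t,\cdot)|$ stays bounded below. But $L_\ep$ lives in all of $B_R(y)$, and far from $y$ the function $\phi(t,\cdot)$ may take values in $[\phi(t,y)-\delta_\ep,\phi(t,y))$ at points belonging to neither the slab nor the paraboloid. Worse, if $\phi(t_0,\cdot)$ has a level set of positive measure exactly at the level $\phi(t_0,x_0)$ somewhere in $B_R(x_0)\setminus\{x_0\}$ --- not excluded by $D\phi(t_0,x_0)\neq0$ --- then, since $\phi(t,y)$ differs from $\phi(t_0,x_0)$ by $O(\ep+r)$ while $\delta_\ep\gtrsim \ep$, the layer $[\phi(t,y)-\delta_\ep,\phi(t,y))$ can straddle that level along a sequence $\ep_n\to0$, the whole plateau then lies inside $L_{\ep_n}$, and $\int_{L_{\ep_n}}K(y-z)\,dz$ stays bounded below. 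In that degenerate situation the final inequality is still true, but only because the plateau is counted with opposite signs in $\k^*[y,\phi(t,\cdot)]$ and in $\k^*[x_0,\phi(t_0,\cdot)]$: your decomposition separates two error terms that must cancel, and neither is individually $o(1)$. Note also that even where it works your argument yields only $\k^*[y,\vp]\le\k^*[x_0,\phi(t_0,\cdot)]+o(1)$, whereas the lemma asserts the exact inequality (the $o(1)$ version does suffice for \eqref{328b} and for absorbing errors into $\eta/2$ in \eqref{338bis}, but it is weaker than what is claimed).

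The paper avoids the thin layer altogether by comparing the superlevel set of $\vp$ at $y$ directly with the superlevel set of the \emph{limit} function $\phi(t_0,\cdot)$ at $x_0$, rather than with that of $\phi(t,\cdot)$ at $y$: if $\phi(t_0,x_0+z)<\phi(t_0,x_0)$ strictly, then for $\ep$ small $\phi(t+T_P(y,\vp),y+z)<\phi(t+\ep^2,x)-\ep^3$, which by \eqref{cx3} forces $\vp(y+z)<\vp(y)$. This gives the recentred inclusion \eqref{inclset}, and a single application of the monotonicity \eqref{eq:monotonie++} yields $\k^*[y,\vp]\le\k^*[x_0,\phi(t_0,\cdot)]$ exactly, with no error term, no intermediate comparison with $\k^*[y,\phi(t,\cdot)]$, and no use of the fine decay hypotheses on $K$. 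If you want to salvage your approach, replace the target level set $\{\phi(t,\cdot)\ge\phi(t,y)-\delta_\ep\}$ by $\{\phi(t_0,\cdot)\ge\phi(t_0,x_0)\}$ (recentred at $x_0$); the contrapositive argument above shows the inclusion holds without any layer to estimate.
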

%--------
%----------------------------
\begin{proof}[Proof of Lemma~\ref{lem:comparaison-courbure}]
  We are in the case where \eqref{cx3} holds true, hence for all $z \in \{
  z\in B_R(y) : \vp (z) \ge \vp (y) \}$, we have $\phi( t + T_P
  (y,\vp),z) \ge \phi (t+\eps^2,x) -\ep^3$. This implies
\begin{equation}\label{inclset}
\{ z \in B_R(0) : \vp (y+z) \ge \vp (y) \} \subset \{ z \in B_R(0) :
\phi(t_0,x_0+z) \ge \phi(t_0,x_0) \}.
\end{equation}
Indeed, if $z \in B_R(0)$ is such that $\phi (t_0,x_0+z) < \phi
(t_0,x_0)$, then for $\ep $ small enough and $(t,x)-(t_0,x_0)$ small
enough, $\phi (t+T_P (y,\vp),y+z) < \phi (t+\eps^2,x) -\ep^3$ and in view of the above this
implies $\vp (y+z) < \vp (y)$.  By properties of the non-local
curvature, \eqref{inclset} implies \eqref{cx2} and the proof of
the lemma is complete.
\end{proof}
%-----------------
We can now complete the proof of Lemma \ref{lclaim} by
%------------------------
\begin{proof}[Proof of \eqref{rs3}]
Remark first that Lemma~\ref{lem:comparaison-courbure} implies in
particular that
 \begin{equation}\label{cx2}
 \k^*[y,\vp] |D \phi (t_0,x_0)| \le \k^* [x_0, \phi(t_0 , \cdot)]  |D \phi (t_0,x_0)|
\end{equation}
since this inequality is trivial when $D \phi (t_0,x_0)=0$.

We now argue by contradiction.  We
  thus assume that there exists $\eta >0$, $\eps_n \to 0$, $(t_n,x_n)
  \to (t_0,x_0)$ and $(y_n,\vp_n)\in \mathcal{C}^+(x_n) $ such that
  $D\vp_n(y_n) \neq 0$, $\k^* [y_n,\vp] >0$ and \eqref{cx3} holds,  and for all $z \in
  B_R(y_n) \cap \{ \vp_n \ge \vp_n (y_n) \}$, we have
\begin{equation}\label{claim:depart}
 \phi (t^+_n,z) - \phi (t^+_n,y_n) \ge
T_P (y_n,\varphi_n) (\k^* [x_0,\phi (t_0,\cdot)]_+ |D\phi (t_0,x_0)| + \eta)
-\eps_n | D \phi (t^+_n,y_n)| .
\end{equation}
It then follows from \eqref{cx2} that for $n$ large enough
\begin{equation}\label{338bis} \phi (t^+_n,z) - \phi (t^+_n,y_n) \ge
T_P(y_n,\vp_n) \( \k^*[y_n,\vp_n] |D\phi(t_n^+,y_n)|+\frac{\eta}2\) -\eps_n | D \phi (t_n^+,y_n)| .
\end{equation}
Assume first that there exists a subsequence such that $T_P (y_n,\vp_n) = \eps_n^{\frac12}$.
In this case, for all $z \in B_R(y_n) \cap  \{ \vp_n \ge \vp_n (y_n) \}$, we find
$$
 \phi (t^+_n,z) - \phi (t^+_n,y_n) \ge \frac{\eta}{2} \eps_n^{\frac12} + O (\eps_n) > 0
$$
for $n$ large enough. We obtain a contradiction by taking $z=y_n$.

Either we have $T_P(y_n,\vp_n)=\ep^{3/2}$ or $T_P (y_n,\varphi_n) = \eps_n / \k^*[y_n,\vp_n]$. In both situations we have
 $T_P (y_n,\varphi_n) \ge \eps_n / \k^*[y_n,\vp_n]$.
Choosing $z=y_n$ in \eqref{338bis} yields
$$
0 \ge T_P(y_n,\vp_n) \frac\eta{2}
$$
which is a contradiction.
\end{proof}
\end{proof}
%%%%%%%%%%%%%%%%%%%%%%%%%%%%%%%%%%%%%%%%%%%%%%%%%%%%%%%%%%%%%%%%%%%%%%%%%%%%%%%%%%%%%%%%%%%%%%%%%%%%%%%%%%%%%
We next prove that the terminal condition is satisfied at the limit.
%------------------------------
\begin{proof}[Proof of Lemma~\ref{lem:dislo-ci}]
The proof consists in proving the following estimate
\begin{equation}\label{eq:tc1}
|u^\eps (t,x) - u_T (x) | \le C (T-t)
\end{equation}
for $t<T$ and $x \in \R^N$ with
$$
C = \sup_{x \in \R^N} \max (|\k_*[x,u_T]||Du_T (x)|+1,
|\k^*[x,u_T]||Du_T (x)|+1 ).
$$
We remark that \eqref{eq:tc1} is a consequence of the following lemma.
%------------------
\begin{lem}\label{lem:tc-reduced}
Consider $k \in \mathbb{N} \cap (0,\eps^{-2} T)$.
If,
\begin{equation}\label{eq:tc1-bis}
\forall (t,x) \in (T-k\eps^2,T) \times \R^N, \quad |u^\eps (t,x)-u_T(x)| \le C (T-t),
\end{equation}
then
\begin{equation}\label{eq:tc2}
\forall (t,x) \in (T-(k+1)\eps^2,T) \times \R^N, \quad  |u^\eps (t,x)-u_T(x)| \le C (T-t).
\end{equation}
\end{lem}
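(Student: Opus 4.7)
My plan is to prove the inductive step following the structure of the analogous Proposition~\ref{pro:icpide}. I would fix $\eta > 0$ and let $u_T^\eta \in C^2_b(\R^N)$ be a mollification of $u_T$ with $\|u_T - u_T^\eta\|_{W^{2,\infty}} \le \eta$, then introduce the test functions
$$ \Psi^\pm(t, x) = u_T^\eta(x) \pm C(T - t), $$
which are $C^2$ bounded on $\R \times \R^N$. The heart of the argument will be the claim that, for $\eps$ and $\eta$ sufficiently small,
$$ \Sc^\eps[\Psi^+] \le \Psi^+ \quad \text{and} \quad \Sc^\eps[\Psi^-] \ge \Psi^- $$
pointwise on $(0, T] \times \R^N$, i.e.\ $\Psi^+$ (resp.\ $\Psi^-$) is a discrete supersolution (resp.\ subsolution) for the game operator.

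Granting the claim, the inductive step follows readily. By the hypothesis \eqref{eq:tc1-bis}, for all $t' \in (T - k\eps^2, T]$ we have $u^\eps(t', \cdot) \le u_T + C(T - t') \le \Psi^+(t', \cdot) + \eta$, and symmetrically $u^\eps(t', \cdot) \ge \Psi^-(t', \cdot) - \eta$. For $t \in (T - (k+1)\eps^2, T - k\eps^2]$, one step of the game advances time by at least $2\eps^2$, so $t^- > T - (k-1)\eps^2 > T - k\eps^2$ falls in the range of the hypothesis. The one-step dynamic programming principle \eqref{pdp:dislo-bis} together with monotonicity and commutativity with constants then gives
$$ u^\eps(t, x) = \Sc^\eps[u^\eps](t, x) \le \Sc^\eps[\Psi^+ + \eta](t, x) = \Sc^\eps[\Psi^+](t, x) + \eta \le \Psi^+(t, x) + \eta \le u_T(x) + C(T - t) + 2\eta, $$
and letting $\eta \to 0$ yields the upper bound in \eqref{eq:tc2}. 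The lower bound is entirely symmetric.

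To prove the claim $\Sc^\eps[\Psi^+] \le \Psi^+$, I would decompose $\Sc^\eps = \RE \circ \Re$ via \eqref{SR} and show separately that $\Re[\Psi^+] \le \Psi^+$ and $\RE[\Psi^+] \le \Psi^+$ as functions; monotonicity of $\RE$ then delivers $\RE[\Re[\Psi^+]] \le \RE[\Psi^+] \le \Psi^+$. Applying the consistency Lemma~\ref{lem:cdr-1bis} with reference point $(t_0, x_0) = (t, x)$ yields
$$ \Re[\Psi^+](t, x) - \Psi^+(t, x) \le T_C(y, \vp)\bigl(-C - \k^*[x, u_T^\eta]_- |Du_T^\eta(x)| + o(1)\bigr), $$
and the definition of $C$ combined with $\|u_T - u_T^\eta\|_{W^{2,\infty}} \le \eta$ ensures the bracketed quantity is bounded above by $-1 + O(\eta) + o(1)$, which is strictly negative for $\eps, \eta$ small. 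Lemma~\ref{lem:cdr-2} gives the analogous bound for $\RE[\Psi^+]$: in each of its three cases the dominant term is $-C$ plus (at worst) $\k^*[x, u_T^\eta] |Du_T^\eta(x)|$, again $\le -1 + O(\eta) + o(1)$. The main technical obstacle will be verifying that the $o(1)$ terms in these consistency lemmas are uniform in the reference point $(t_0, x_0)$, so that the pointwise inequalities promote to the function inequalities required for the monotonicity step; this reduces to the continuous dependence of $\k^*[\cdot, u_T^\eta]$ and $\k_*[\cdot, u_T^\eta]$ on the base point, which follows from the smoothness of $u_T^\eta$ much as in the proofs of Lemmas~\ref{lem:cdr-1} and~\ref{lem:cdr-2}.
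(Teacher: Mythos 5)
Your overall architecture --- induction, the decomposition $\Sc^\eps=\RE\circ\Re$, the consistency lemmas for $\Re$ and $\RE$, and monotonicity plus commutation with constants --- is the same as the paper's; the repackaging into discrete barriers $\Psi^\pm$ is a cosmetic difference. (The paper proves the lower bound by bounding $\Re[u^\eps]$ from below via Lemma~\ref{lem:cdr-2bis} and the inductive hypothesis, then bounding $\RE[u_T]$ from below by the \emph{explicit} choice $(y,\varphi)=(x,u_T)$, for which $\inf_{z:\,u_T(z)\ge u_T(x)}u_T(z)\ge u_T(x)$ needs no consistency lemma at all; your use of Lemma~\ref{lem:cdr-1bis} for the inner operator could likewise be replaced by the trivial choice of a constant $\varphi$, which gives $\Re[\Psi^+]\le\Psi^+(\cdot+\eps^2,\cdot)=\Psi^+-C\eps^2$ directly and uniformly.)

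The genuine gap is the mollification $u_T\mapsto u_T^\eta$. The constant $C$ is defined through $\kappa_*[x,u_T]$ and $\kappa^*[x,u_T]$, but in Case 3 of Lemma~\ref{lem:cdr-2} applied to $\Psi^+$ the term you must dominate is $\kappa^*[x,u_T^\eta]\,|Du_T^\eta(x)|$. The integral curvature is \emph{not} stable under $W^{2,\infty}$-small perturbations: it depends on the super-level set $\{u_T^\eta\ge u_T^\eta(x)\}$, which can differ drastically from $\{u_T\ge u_T(x)\}$ near critical values of $u_T$, so $\kappa^*[x,u_T^\eta]$ need not be within $O(\eta)$ of $\kappa^*[x,u_T]$ and your bracketed quantity need not be $\le -1+O(\eta)+o(1)$. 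For the same reason, your proposed cure for the uniformity of the $o(1)$ --- ``continuous dependence of $\kappa^*[\cdot,u_T^\eta]$ on the base point'' --- is not available: by \eqref{eq:usc-courbure}--\eqref{eq:lsc-courbure} these quantities are only semi-continuous, even for smooth arguments. The paper avoids both problems by running the estimates on $u_T$ itself (the Taylor expansions in the consistency lemmas only require $W^{2,\infty}$ regularity) and by keeping the curvature comparison anchored at $u_T$, with the ``$+1$'' in the definition of $C$ absorbing the $o(1)$. You should drop the mollification, or else accept a constant $C_\eta$ depending on $\eta$, which destroys the induction.
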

%-----------------
\end{proof}

It remains to prove Lemma~\ref{lem:tc-reduced}.
\begin{proof}[Proof of Lemma~\ref{lem:tc-reduced}]
We only prove that for all $(t,x) \in (T-(k+1)\eps^2,T) \times \R^N$, we have
$$
u^\eps (t,x)\ge u_T(x) -  C (T-t)
$$
and the reader can check that the proof of the reverse inequality is similar.

  It is enough to consider $t \in (T-(k+1)\eps^2, T-k \eps^2)$.  We
  recall that the dynamic programming principle can be written as
  follows
$$
u^\eps (t,x) = \mathcal{S}^\eps [u^\eps] (t, x) = \RE [\Re [u^\eps]] (t,x).
$$

Thanks to Lemma~\ref{lem:cdr-2bis} and \eqref{eq:tc1-bis}, we know
that there exists $(y,\vp) \in \mathcal{C}^+ (x)$ such that
\begin{eqnarray*}
\Re[u^\eps ](t,x) &\ge &u_T (x) - C (T-(t+T_C (y,\vp)))
-  T_C (y,\vp) ( \k_* [x,u_T] |Du_T(x)| + 1) \\
&\ge &u_T (x) - C(T-t).
\end{eqnarray*}

We now use that $\RE$ is monotone and commutes with constants
(see \eqref{monotonieR} and \eqref{rpc}) in order to write
$$
u^\eps (t,x) \ge \RE [u_T](x) - C (T-t).
$$

We remark next that $(x,u_T)\in \mathcal{C}^+(x)$ and we  write
$$
 u^\eps (t,x) \ge \inf_{z \in \mathcal{P}^+ (x,x,u_T)} u_T( z) - C (T-t).
$$
We distinguish cases.

If $Du_T(x) \neq 0$ and $\k^*[x,u_T] >0$, then we have the desired
inequality; indeed,
\begin{eqnarray*}
 u^\eps (t,x) &\ge& \inf_{z : u_T(z) \ge u_T (x)} u_T(z) - C (T-t) \\
& \ge & u_T (x) - C(T-t).
\end{eqnarray*}

If now $Du_T (x)=0$ or $\k^*[x,u_T] \le 0$, then we also have
$$
u^\eps(t,x) \ge u_T (x) - C (T-t).
$$
The proof of Lemma~\ref{lem:tc-reduced} is now complete.
\end{proof}
%------------------------------


\begin{thebibliography}{10}

\bibitem{ahlm}
{\sc O.~Alvarez, P.~Hoch, Y.~Le~Bouar, and R.~Monneau}, {\em Dislocation
  dynamics: short-time existence and uniqueness of the solution}, Arch. Ration.
  Mech. Anal., 181 (2006), pp.~449--504.

\bibitem{barlesimbert}
{\sc G.~Barles and C.~Imbert}, {\em Second-order elliptic integro-differential
  equations: Viscosity solutions' theory revisited}, Annales de l'Institut
  Henri Poincar\'e, Analyse non lin\'eaire, 25 (2008), pp.~567--585.

\bibitem{bss}
{\sc G.~Barles, H.~M. Soner, and P.~E. Souganidis}, {\em Front propagation and
  phase field theory}, SIAM J. Control Optim., 31 (1993), pp.~439--469.

\bibitem{bs91}
{\sc G.~Barles and P.~E. Souganidis}, {\em Convergence of approximation schemes
  for fully nonlinear second order equations}, Asymptotic Anal., 4 (1991),
  pp.~271--283.

\bibitem{cafroqsav}
{\sc L.~Caffarelli, J.-M. Roquejoffre, and O.~Savin}, {\em Non local minimal
  surfaces}, 2009.
\newblock arXiv 0905.1183.v1.

\bibitem{cafsoug}
{\sc L.~Caffarelli and P.~E. Souganidis}, {\em Convergence of {N}onlocal
  {T}hreshold {D}ynamics {A}pproximations to {F}ront {P}ropagation}, Arch.
  Rational Mech. Anal.,  (2008)
\newblock to appear.

\bibitem{cgg}
{\sc Y.~G. Chen, Y.~Giga, and S.~Goto}, {\em Uniqueness and existence of
  viscosity solutions of generalized mean curvature flow equations}, J.
  Differential Geom., 33 (1991), pp.~749--786.

\bibitem{ct}
{\sc R. Cont and P. Tankov},
{\em Financial modelling with jump processes},
 Chapman \& Hall/CRC Financial Mathematics Series,
      (2004), pp. xvi+535.

\bibitem{cil92}
{\sc M.~G. Crandall, H.~Ishii, and P.-L. Lions}, {\em User's guide to viscosity
  solutions of second order partial differential equations}, Bull. Amer. Math.
  Soc. (N.S.), 27 (1992), pp.~1--67.

\bibitem{cl81}
{\sc M.~G. Crandall and P.-L. Lions}, {\em Condition d'unicit\'e pour les
  solutions g\'en\'eralis\'ees des \'equations de {H}amilton-{J}acobi du
  premier ordre}, C. R. Acad. Sci. Paris S\'er. I Math., 292 (1981),
  pp.~183--186.

\bibitem{dfm}
{\sc F.~Da~Lio, N.~Forcadel, and R.~Monneau}, {\em Convergence of a non-local
  eikonal equation to anisotropic mean curvature motion. {A}pplication to
  dislocation dynamics}, J. Eur. Math. Soc. (JEMS), 10 (2008), pp.~1061--1104.

\bibitem{evanssouganidis}
{\sc L.~C. Evans and P.~E. Souganidis}, {\em Differential games and
  representation formulas for solutions of {H}amilton-{J}acobi-{I}saacs
  equations}, Indiana Univ. Math. J., 33 (1984), pp.~773--797.

\bibitem{evansspruck}
{\sc L.~C. Evans and J.~Spruck}, {\em Motion of level sets by mean curvature.
  {I}}, J. Differential Geom., 33 (1991), pp.~635--681.

\bibitem{fim}
{\sc N.~Forcadel, C.~Imbert, and R.~Monneau}, {\em Homogenization of some
  particle systems with two-body interactions and of the dislocation dynamics},
  Discrete Contin. Dyn. Syst., 23 (2009), pp.~785--826.

\bibitem{imbert}
{\sc C.~Imbert}, {\em Level set approach for fractional mean curvature flows},
  Interfaces Free Bound., 11 (2009), pp.~153--176.

\bibitem{is}
{\sc C.~Imbert and P.~E. Souganidis}, {\em Phasefield theory for fractional
  reaction-diffusion equations and applications}.
\newblock Preprint arXiv:0907.5524v1, 2009.

\bibitem{ks0}
{\sc R.~V. Kohn and S.~Serfaty}, {\em A deterministic-control-based approach to
  motion by curvature}, Comm. Pure Appl. Math., 59 (2006), pp.~344--407.

\bibitem{ks}
{\sc R.~Kohn and S.~Serfaty}, {\em A deterministic-control-based approach to
  fully non-linear parabolic and elliptic equations}, Comm. Pure Appl. Math,  \newblock to appear.

\bibitem{lions82}
{\sc P.-L. Lions}, {\em Generalized solutions of {H}amilton-{J}acobi
  equations}, vol.~69 of Research Notes in Mathematics, Pitman (Advanced
  Publishing Program), Boston, Mass., 1982.

\bibitem{os}
{\sc S. Osher and J. A. Sethian},
{\em Fronts propagating with curvature-dependent speed: algorithms
              based on {H}amilton-{J}acobi formulations},
J. Comput. Phys., 79 (1988), 1, pp.~12--49.
	
\bibitem{sayah}
{\sc A. Sayah}, {\em \'{E}quations
    de {H}amilton-{J}acobi du premier ordre avec termes
    int\'egro-diff\'erentiels. {I}. {U}nicit\'e des solutions de
    viscosit\'e}, Comm. Partial Differential Equations, 16 (1991),
  6-7, pp.~1057--1074.

\bibitem{soner}
{\sc H. M. Soner},
     {\em Optimal control with state-space constraint. {II}},
   SIAM J. Control Optim.,
      24 (1986), 6, pp.~1110--1122.

\bibitem{souganidis}
{\sc P.~E. Souganidis}, {\em Front propagation: theory and applications}, in
  Viscosity solutions and applications (Montecatini Terme, 1995), vol.~1660 of
  Lecture Notes in Math., Springer, Berlin, 1997, pp.~186--242.

\bibitem{spencer}
{\sc J.~Spencer}, {\em Balancing games}, J. Combinatorial Theory Ser. B, 23
  (1977), pp.~68--74.

\end{thebibliography}
\end{document}